\documentclass[a4paper]{amsart}
\usepackage[all]{xy}

\usepackage[colorlinks=true]{hyperref}
\usepackage{enumerate}
\usepackage{amssymb}
\usepackage{comment}
\vfuzz12pt 
\hfuzz12pt 
\newtheorem{thm}{Theorem}[section]

\newtheorem{prop}[thm]{Proposition}
\newtheorem{lem}[thm]{Lemma}
\newtheorem{cor}[thm]{Corollary}


\theoremstyle{definition}
\newtheorem{defn}[thm]{Definition}
\newtheorem{ex}[thm]{Example}


\theoremstyle{remark}
\newtheorem{rem}[thm]{Remark}



\newcommand{\Rr}{\mathbb R}

\newcommand{\Nn}{\mathbb N}

\renewcommand{\d}{\mathrm d}                           




\renewcommand{\d}{\mathrm d}               

\newcommand{\smc}{\mbox{\,\tiny{$\circ $}\,}}         



\begin{document}
\title{Hierarchies and compatibility on Courant algebroids}

\author{P. Antunes}
\address{CMUC, Department of Mathematics, University of Coimbra, 3001-454 Coimbra, Portugal}
\email{pantunes@mat.uc.pt}
\author{C. Laurent-Gengoux}
\address{UMR 7122, Universit\'e de Metz, 57045 METZ, France
 \newline
 CMUC, Department of Mathematics, University of Coimbra, 3001-454 Coimbra, Portugal}
\email{claurent@univ-metz.fr}
\author{J.M. Nunes da Costa}
\address{CMUC, Department of Mathematics, University of Coimbra, 3001-454 Coimbra, Portugal}
\email{jmcosta@mat.uc.pt}

\begin{abstract}
We introduce several notions (Poisson-Nijenhuis, deforming-Nijenhuis and Nijenhuis pairs) that extend to Courant algebroids the notion of a Poisson-Nijenhuis manifold, by using the idea that both the Poisson and the Nijenhuis structures, although they seem to be different in nature when considered on manifolds, are just $(1,1)$-tensors on the usual Courant algebroid $TM \oplus T^*M $ satisfying several constraints. For each of the generalizations mentioned, we show that there are natural hierarchies obtained by successive deformation by one of the $(1,1)$-tensor.
\end{abstract}

\maketitle

\section*{Introduction}             %
\label{sec:introduction}           %
The purpose of the present article is to explain how $(1,1)$-tensors with vanishing Nijenhuis torsion on a Courant structure naturally give rise to several types of hierarchies - and to show it using as much as possible of supergeometric formalism.  To start with, we say a few words on, respectively, Courant structures, supergeometric formalism, Leibniz algebroids, Nijenhuis torsion and hierarchies. Having recalled these notions, we explain the purpose of our study. We finish this introduction by a more detailed summary of the content of the present work.

\subsection{On Courant structures, supergeometry, Leibniz algebroids, Nijenhuis torsion and hierarchies}

\subsubsection*
{\bf Courant structures.}
It has been noticed by Roytenberg \cite{roythesis} that the original $\Rr$-bilinear skew-symmetric bracket introduced by Courant \cite{cou} on the space of sections of $TM \oplus T^*M $, for $M$ a manifold, can be equivalently defined as:
\begin{equation}  \label{dorfman}
 [(X,\alpha),(Y,\beta)]:=([X,Y], L_X \beta - i_Y \d   \alpha ),
\end{equation}
with $X,Y \in \Gamma(TM)$ and $\alpha,\beta \in \Gamma(T^*M)$. This is a Loday bracket and it was used by Dorfman in \cite{dorfman}. When made abstract, the original Courant's structure on $TM \oplus T^*M $ yields to the definition of Courant algebroid given by Liu, Weinstein and Xu \cite{lwx}, while the version with non-skew-symmetric bracket (\ref{dorfman}) yields to the equivalent definition of Courant algebroid by Roytenberg \cite{roythesis} (see also \cite{YKS05} for a simpler version). Relaxing the Jacobi identity of the Loday bracket, one gets the weaker notion of pre-Courant algebroid (see Definition \ref{def_courant} below).

\vspace{0.5cm}

\noindent {\bf Supergeometric formalism.}
To say the least, to deal with Courant bracket can be an heavy task when it comes about computation (e.g. \cite{YKS92, voronov}), due to the many structures that make it, and to the un-natural aspects of some of its operations. Fortunately, in supergeometric formalism,
all these structures and conditions are encoded in two objects and one condition.
The idea goes as follows. To every vector bundle equipped with a non-degenerate bilinear form is associated a graded commutative algebra, equipped with a Poisson bracket denoted by $\{\cdot, \cdot\} $ (which coincides with the big bracket \cite{YKS92} in some particular cases) \cite{royContemp}. It happens that pre-Courant structures are in one-to-one correspondence with functions of degree $3$. Pre-Courant structures which are indeed Courant are precisely those functions that satisfy:
   $$ \{\Theta,\Theta\}=0$$ (see \cite{royContemp} and \cite{antunes2}).

\vspace{0.5cm}

\noindent {\bf Leibniz algebroids.}
Courant structures on vector bundles can be viewed as special cases of Leibniz algebroids \cite{marrero}. These are vector bundles $E \to M$ equipped with a $\Rr$-bilinear bracket on its space of sections and a vector bundle morphism $\rho:E \to TM$ satisfying the Leibniz rule:
$$[X, fY]=f[X,Y]+ (\rho(X).f)Y$$
and the Jacobi identity:
$$[X,[Y,Z]]=[[X,Y],Z]+[Y,[X,Z]],$$ for all $X,Y,Z \in \Gamma(E)$ and $f \in C^\infty(M)$. Relaxing the Jacobi identity one gets the weaker notion of pre-Leibniz algebroid. When the base manifold reduces to a point, a Leibniz algebroid is just a Leibniz (or Loday) algebra, while a pre-Leibniz algebroid is simply an algebra.
It is easy to check  (see \cite{YKS05}) that pre-Courant algebroids are pre-Leibniz algebroids. But it is important to stress that
the supergeometric approach, referred above for pre-Courant and Courant structures, is not valid in the more general pre-Leibniz and Leibniz algebroid framework.

\vspace{0.5cm}

\noindent {\bf Nijenhuis torsion.}
The Nijenhuis torsion of a $(1,1)$-tensor, i.e. a fiberwise linear endomorphism of $TM$, is the $(2,1)$-tensor given by:
\begin{equation*}\label{eq:modified}
 X,Y \mapsto [NX,NY]   - N[X,Y]_N, \hbox{ where } [X,Y]_N :=  [NX,Y] + [X, NY] -N[X,Y]  .
\end{equation*}
 For a $(1,1)$-tensor, being Nijenhuis torsion-free is in general meaningful (we shall just say ``Nijenhuis tensors" for torsion-free tensors).
  The previous definition can be extended without any change from $TM$ to arbitrary Lie algebroids \cite{magriYKS, graburb},
then from Lie algebroids to Courant algebroids \cite{cgrabm04, YKS11} and Leibniz algebroids \cite{cgrabm04}.

 A general idea about Nijenhuis $(1,1)$-tensors is that it allows to deform an object into an object of the same type,
 for instance, to deform a Lie algebroid bracket $[.,.] $ into the bracket $[.,.]_N $ above, which can be shown to be a Lie algebroid bracket again, or to deform a Poisson structure into another one.

\vspace{0.5cm}

\noindent {\bf Hierarchies.}
There is no mathematical definition of what a \emph{hierarchy} is, but, within the context of integrable systems, the name has been commonly given either to families (indexed by ${\mathbb N}$ or ${\mathbb Z}$) of Hamiltonian functions that commute for a fixed Poisson structure, or of Poisson structures/Lie algebroids which commute between themselves - and sometimes families of both Poisson structures and Hamiltonian functions such that two functions in that family commute with respect to any Poisson structure.
We use that name in the same spirit: i.e., for us a hierarchy is either a family of commuting Courant structures, either a family of Nijenhuis tensors that commute w.r.t. to some Courant structure - or a family of both Courant and (pairs of) Nijenhuis tensors.

A general idea \cite{magriYKS}, \cite{magrimorosi}
about hierarchies is that we start with a few objects, compatible between themselves, then we give ourself a Nijenhuis tensor with the help of which we deform the objects in question, yielding sequences of objects of various types, which are all compatible between themselves.

\subsection{Purpose and content of the present article.}

Our goal is, as we already stated, to construct hierarchies as follows:
\begin{enumerate}
\item hierarchies of Courant structures, given a Nijenhuis tensor on a Courant algebroid,
\item hierarchies of Poisson structures, given a Nijenhuis tensor compatible with a given Poisson structure on a Courant algebroid. For this point, the Courant structure does not need to satisfy the Jacobi identity :
    it just needs to be what we called a pre-Courant structure.
\item hierarchies of Courant structures and pairs of tensors that we call deforming-Nijenhuis pairs or Nijenhuis pairs. Again, pre-Courant structures are enough for most results presented here.
\end{enumerate}
The idea behind item (1) above is simply that what holds true for manifolds and Lie algebroids should hold true for Courant structures as well, and that, in particular, deforming a Courant structure by a Nijenhuis tensor should give a hierarchy of compatible Courant structures. The idea behind items (2) and (3) is more involved. We invite the reader to have in mind the case of Poisson-Nijenhuis structures to get some intuitive picture, but we insist that our constructions apply to much more general contexts. The idea is that, in terms of Courant algebroids, Poisson-Nijenhuis structures  \cite{magrimorosi,magriYKS,graburb} can be seen as follows:
\begin{itemize}
\item we consider the Courant structure $\Theta$ on $TM \oplus T^*M$ already evoked,
\item we see a Poisson structure $\pi$ on the manifold $M$ as a skew-symmetric $(1,1)$-tensor $J_\pi: TM \oplus T^*M \to TM \oplus T^*M $ (see Example~\ref{example1} a),
\item we see a $(1,1)$-tensor $N$ on the manifold $M$ as a skew-symmetric $(1,1)$-tensor $I_N : TM \oplus T^*M \to TM \oplus T^*M$ (see Example~\ref{example1} c),
\end{itemize}
then we check that the conditions of compatibility required on $(\pi,N)$ to be Poisson-Nijenhuis mean that $J_\pi$ and $I_N$ anti-commute and anti-commute w.r.t. the Courant structure, see Example \ref{examplePN}. When made abstract, these conditions yield our Definition \ref{PNpair} of Poisson-Nijenhuis pair on a (pre-)Courant algebroid.
Having established this definition, we can address the purposes of items (2) and (3) above, by generalizing the hierarchies of \cite{magrimorosi}. Indeed, it happens that the notion of Poisson-Nijenhuis is slightly too restrictive, and that hierarchies can be constructed in the more general context of deforming-Nijenhuis pairs and Nijenhuis pairs.

The statements of most results in this article are written in the pre-Courant algebroid framework and are proved using the supergeometric formalism. However, for some of them, the proofs only use the pre-Leibniz structure induced by the pre-Courant structure, so that these results hold not only for pre-Courant algebroids, but also for the more general setting of pre-Leibniz algebroids. This happens, for example, with most results in sections 2.1 and 2.2 and the whole section 4.
Indeed, most results of that section remain true for every vector space endowed with a quadratic form, provided that it admits the property that the deformed operator
by a Nijenhuis torsion-free linear operator is again of the same type - which is true for operators that satisfy 
 relations
like skew-symmetry and Jacobi identity. The lack of convincing examples prevented us from going to such a unnecessary level of generality.

\

Let us give a more precise content of the article.
In section 1, we make a brief introduction of the supergeometric setting for (pre-)Courant structures and we recall the notions of deforming and Nijenhuis tensors.

In section 2, we
show that a Courant structure $\Theta$ can be deformed $k$ times
by a Nijenhuis tensor $I$, and that the henceforth obtained objects $(\Theta_k)_{k \in \Nn}$ are compatible (Theorem \ref{hierarchycourant}).
Then, we show that the property of being compatible is, for a given compatible pair $(I,J)$, also preserved when deforming $n$ times $J$ by $I$, provided that $I$ is Nijenhuis (or at least satisfies a weaker condition involving the vanishing of torsion of $I$ on the image of $J$), and that this result still holds true with respect to pre-Courant structures $\Theta_k$ obtained when deforming $\Theta$ by $I$ (Theorem~\ref{new thma}). An even more general case is obtained when considering the tensor $I^{2s+1}$, $s \in \Nn$, which is the deformation of $I$ by itself an odd number of times, and, if $J$ is also Nijenhuis, $J$ is replaced by  $I^n \smc J^{2m+1}$, $n,m \in \Nn$ (Theorem \ref{propconcgeral}).

In section 3, we turn our attention to deforming-Nijenhuis pairs, i.e. compatible pairs $(J,I)$ where $J$ is a deforming tensor and $I$ is Nijenhuis for $\Theta$. We show that if $(J,I)$ is a deforming-Nijenhuis pair for $\Theta$, then  $(J,I^{2n+1})$ is a deforming-Nijenhuis pair for $\Theta_{k}$, for all $k,n\in \Nn$ (Theorem \ref{oddpowerI}).
Then, we consider
Poisson-Nijenhuis pairs $(J,I)$, i.e. deforming-Nijenhuis pairs where the deforming tensor $J$ is supposed to be Poisson for $\Theta$, and we state one of the main results of the article, which is the construction of a hierarchy of Poisson-Nijenhuis pairs for $\Theta_{k}$, for all $k \in \Nn$, that includes pairs of compatible Poisson tensors (Theorem~\ref{generalPNhierarchy}).

Last, in section 4, we conclude with the case of Nijenhuis pairs, i.e. pairs $(I,J)$ of Nijenhuis tensors compatible w.r.t. to $\Theta$.
More precisely, we show that if
$(I,J)$ is a Nijenhuis pair for $\Theta$, then for all $m,n,t \in \Nn$, $(I^{2m+1}\smc J^n, J^{2t+1})$ is a Nijenhuis pair for $\Theta$, and, more generally, for all the Courant structures obtained by deforming $\Theta $ several times, either by $I$ or by $J$ (Theorem \ref{gen_hierarchy}).


\section{Skew-symmetric tensors on Courant algebroids}

\subsection{Courant algebroids in supergeometric terms}  \label{subsection:1.1}
We begin this section by introducing the supergeometric setting, following the same approach as in \cite{voronov,roy,royContemp}. Given a vector bundle $A \to M$, we denote by $A[n]$ the graded manifold obtained by shifting the fibre degree by $n$. The graded manifold $T^*[2]A[1]$ is equipped with a canonical symplectic structure which induces a Poisson bracket on its algebra of functions $\mathcal{F}:=C^\infty(T^*[2]A[1])$. This Poisson bracket is sometimes called the \emph{big bracket} (see \cite{YKS92}, \cite{YKS05}).

Let us describe locally this Poisson algebra. Fix local coordinates $x_i, p^i,\xi_a, \theta^a$, $i \in \{1,\dots,n\}, a \in \{1,\dots,d\}$, in $T^*[2]A[1]$, where $x_i,\xi_a$ are local coordinates on $A[1]$ and $p^i, \theta^a$ are their associated moment coordinates. In these local coordinates, the Poisson bracket is given by
 $$ \{p^i,x_i\}=\{\theta^a,\xi_a\}=1,  \quad  i =1, \dots, n, \, \, a=1, \dots , d, $$
while all the remaining brackets vanish.

The Poisson algebra of functions $\mathcal{F}$ is endowed with a $(\mathbb{N} \times \mathbb{N})$-valued bidegree. We define this bidegree locally but it is well defined globally (see \cite{voronov, roy} for more details). The bidegrees are locally set as follows: the coordinates on the base manifold $M$, $x_i$, $i \in \{1,\dots,n\}$, have bidegree $(0,0)$, while the coordinates on the fibres, $\xi_a$, $a \in \{1,\dots,d\}$, have bidegree $(0,1)$ and their associated moment coordinates, $p^i$ and $\theta^a$, have bidegrees $(1,1)$ and $(1,0)$, respectively.
The algebra of functions $\mathcal{F}$ inherits this bidegree and
$$   \mathcal{F}=\bigoplus_{k,l \in \mathbb{N} \times \mathbb{N}} \mathcal{F}^{k,l}, $$
where $\mathcal{F}^{k,l}$ is the space of functions of bidegree $(k,l)$.
We can verify that the big bracket has bidegree $(-1,-1)$, i.e.,
$$\{\mathcal{F}^{k_1,l_1},\mathcal{F}^{k_2,l_2}\}\subset \mathcal{F}^{k_1+k_2-1,l_1+l_2-1}.$$

This construction is a particular case of a more general one in which we consider a vector bundle $E$ equipped with a fibrewise non-degenerate symmetric bilinear form $\langle.,.\rangle$.
In this more general setting, we consider the graded symplectic manifold $\mathcal{E}:=p^*(T^*[2]E[1])$, which is the pull-back of $T^*[2]E[1]$ by the application $p:E[1] \to E[1]\oplus E^*[1]$ defined by $X \mapsto (X, \frac{1}{2}\langle X,.\rangle)$. We denote by $\mathcal{F}_{E}$ the graded algebra of functions on $\mathcal{E}$, i.e., $\mathcal{F}_{E}:=C^\infty(\mathcal{E})$. The algebra of functions $\mathcal{F}_{E}$ is equipped with the canonical Poisson bracket, denoted by $\{.,.\}$, which has degree $-2$. Notice that $\mathcal{F}_{E}^0=C^\infty(M)$ and $\mathcal{F}_{E}^1=\Gamma(E)$. Under these identifications, the Poisson bracket of functions of degrees $0$ and $1$ is given by
$$\{f,g\}=0,\; \; \{f, X\}=0 \quad {\hbox{and}} \quad \{X,Y\}=\langle X,Y \rangle,$$
for all $X,Y \in \Gamma(E)$ and $f,g \in C^\infty(M)$.

 The construction described in the beginning of this section corresponds to the case where $E:=A\oplus A^*$ and $\langle .,.\rangle$ is the usual symmetric bilinear form. Notice that, with the notation introduced so far, the algebra of functions $\mathcal{F}=C^\infty(T^*[2]A[1])$ should be denoted by $\mathcal{F}_{A\oplus A^*}$.

\

Let us define the notion of (pre-)Courant structure on a vector bundle $E$ equipped with a fibrewise non-degenerate symmetric bilinear form $\langle.,.\rangle$.

\begin{defn} \label{def_courant}
 A \emph{pre-Courant} structure on $(E, \langle.,.\rangle)$ is a pair $(\rho, [.,.])$, where the \emph{anchor} $\rho$ is a bundle map from $E$ to $TM$ and the \emph{Dorfman bracket} $[.,.]$ is a $\mathbb{R}$-bilinear (non necessarily skew-symmetric) assignment on $\Gamma(E)$ satisfying the relations
\begin{equation} \label{pre_Courant1}
\rho(X)\cdot\langle Y,Z\rangle=\langle[X,Y],Z\rangle +  \langle Y,[X,Z]\rangle
\end{equation}
and
\begin{equation} \label{pre_Courant2}
\rho(X)\cdot \langle Y,Z\rangle=\langle X, [Y,Z]+ [Z,Y]\rangle,
\end{equation}
for all $X,Y,Z \in \Gamma(E)$. \footnote{ From (\ref{pre_Courant1}) and (\ref{pre_Courant2}), we get \cite{YKS05}
$$[X, fY]=f[X,Y]+ (\rho(X).f)Y,$$ for all $X,Y \in \Gamma(E)$ and $f \in C^\infty(M)$. Thus, as we already mentioned in the Introduction, a pre-Courant algebroid is always a pre-Leibniz algebroid.}

Moreover, if the Jacobi identity,
$$[X,[Y,Z]] =[[X,Y],Z] + [Y,[X,Z]],$$
is satisfied for all $X,Y,Z \in \Gamma(E)$, then the \emph{Dorfman bracket} $[.,.]$ is a Leibniz bracket and the pair $(\rho, [.,.])$ is called a \emph{Courant} structure on \mbox{$(E,\langle.,.\rangle)$}.
\end{defn}

\

There is a one-to-one correspondence between pre-Courant structures on $(E, \langle.,.\rangle)$ and functions of $\mathcal{F}_E^3$. The anchor and Dorfman bracket associated to a given $\Theta\in \mathcal{F}_E^3$ are defined, for all $X,Y \in \Gamma(E)$ and $f \in C^\infty(M)$, by
$$\rho(X)\cdot f=\{\{X,\Theta\},f\} \quad {\hbox{and}} \quad {[X,Y]=\{\{X,\Theta\},Y\}}.$$

The following theorem addresses how the Jacobi identity is expressed in this supergeometric setting.

\begin{thm} \cite{royContemp}
There is a one-to-one correspondence between Courant structures on \mbox{$(E,\langle.,.\rangle)$} and functions $\Theta \in \mathcal{F}_E^3$ such that \mbox{$\{\Theta,\Theta\}=0$}.
\end{thm}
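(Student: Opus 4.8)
The plan is to establish the correspondence in two directions, using the dictionary that translates the big bracket into the Dorfman bracket and anchor. We already know from the preceding discussion that every $\Theta \in \mathcal{F}_E^3$ defines a pre-Courant structure via $\rho(X)\cdot f=\{\{X,\Theta\},f\}$ and $[X,Y]=\{\{X,\Theta\},Y\}$, and that this assignment is a bijection onto pre-Courant structures. Hence the real content is to show that, under this bijection, the Jacobi identity for $[.,.]$ corresponds exactly to the condition $\{\Theta,\Theta\}=0$. So I would reduce the theorem to proving the single equivalence: the Dorfman bracket associated to $\Theta$ satisfies the Jacobi identity for all sections if and only if $\{\Theta,\Theta\}=0$.

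First I would unwind the Jacobiator. For sections $X,Y,Z \in \Gamma(E) = \mathcal{F}_E^1$, I would write the iterated brackets in terms of the big bracket: $[X,[Y,Z]]=\{\{X,\Theta\},\{\{Y,\Theta\},Z\}\}$, and similarly for the two terms on the right-hand side. The key algebraic tool is the graded Jacobi identity for the big bracket $\{.,.\}$ itself, together with the fact that the big bracket has degree $-2$, so that expressions like $\{X,\Theta\}$ live in degree $2$ and one must track signs carefully. The aim is to collect the three iterated-bracket terms and rewrite their combination so that a factor of $\{\Theta,\Theta\}$ appears; concretely, I expect the Jacobiator evaluated on $X,Y,Z$ to equal something of the form $\pm\,\{\{\{X, \{\Theta,\Theta\}\}, Y\}, Z\}$ (up to a universal constant), obtained by repeatedly applying the derivation property of the big bracket and commuting the inner $\Theta$'s past one another. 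This is a purely formal computation in the graded Poisson algebra $\mathcal{F}_E$.

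Having obtained such an identity, the equivalence follows quickly. If $\{\Theta,\Theta\}=0$, then the right-hand side vanishes identically, so the Jacobiator is zero for all $X,Y,Z$ and $[.,.]$ is a Leibniz bracket. Conversely, if the Jacobi identity holds for all sections, then $\{\{\{X,\{\Theta,\Theta\}\},Y\},Z\}=0$ for all $X,Y,Z \in \mathcal{F}_E^1$; since $\{\Theta,\Theta\} \in \mathcal{F}_E^4$ and the big bracket is nondegenerate in the appropriate sense, repeatedly pairing against generators of degree $1$ separates points, forcing $\{\Theta,\Theta\}=0$. I would justify this last nondegeneracy step by noting that functions in $\mathcal{F}_E$ are polynomial in the fibre and moment coordinates and that bracketing with the degree-$1$ generators (the $X$'s) acts, up to the pairing $\langle\cdot,\cdot\rangle$, as the partial derivatives in those coordinates, so that a function annihilated by all such triple brackets must vanish.

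The main obstacle I expect is bookkeeping the Koszul signs and degree shifts correctly throughout the graded-Jacobi manipulation, rather than any conceptual difficulty: one must be attentive to the degree $-2$ of the bracket and the degree $3$ of $\Theta$ when commuting terms, and to the precise constant relating the Jacobiator to $\{\{\{X,\{\Theta,\Theta\}\},Y\},Z\}$. A secondary subtlety is making the converse rigorous, i.e. arguing carefully that vanishing of all the triple brackets against degree-$1$ elements genuinely implies $\{\Theta,\Theta\}=0$ and not merely that it is central; this is where the explicit local coordinate description of $\mathcal{F}_E$ and the nondegeneracy of $\langle\cdot,\cdot\rangle$ are used. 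Since this result is attributed to \cite{royContemp}, I would expect the paper to cite it and perhaps only sketch the computation, but the scheme above is the route I would follow to prove it from scratch.
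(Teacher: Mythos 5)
Your outline is sound, but note that the paper itself gives no proof of this statement: it is quoted verbatim from Roytenberg's work \cite{royContemp}, with the reduction to ``Jacobi identity $\Leftrightarrow \{\Theta,\Theta\}=0$'' already implicit in the surrounding discussion of the pre-Courant correspondence. Your plan is essentially Roytenberg's own derived-bracket argument: the graded Jacobi identity for the big bracket indeed yields the Jacobiator as a universal multiple of $\{\{Y,\{X,\{\Theta,\Theta\}\}\},Z\}$ (concretely, $[X,[Y,Z]]-[[X,Y],Z]-[Y,[X,Z]]=\tfrac{1}{2}\{\{Y,\{X,\{\Theta,\Theta\}\}\},Z\}$), so the forward direction is immediate. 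For the converse, the nondegeneracy step you flag is the only delicate point, and it works as you suggest: replacing $Z$ by $fZ$ and using the Leibniz rule shows that vanishing of all triple brackets against sections also forces vanishing of the brackets against degree-$0$ functions, after which a local coordinate check on the three homogeneous components of the degree-$4$ element $\{\Theta,\Theta\}$ (the $pp$, $pe^2$ and $e^4$ parts) shows it must be zero.
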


If $\Theta$ is a (pre-)Courant structure on $(E, \langle.,.\rangle)$, then the triple $(E, \langle.,. \rangle, \Theta)$ is called a \emph{(pre-)Courant algebroid}.
For the sake of simplicity, we will often denote a (pre-)Courant algebroid by the pair $(E, \Theta)$ instead of the triple $(E, \langle.,. \rangle, \Theta)$.

When $E= A \oplus A^*$ and $\langle.,.\rangle$ is the usual symmetric bilinear form, a pre-Courant structure $\Theta \in \mathcal{F}_E^3$ can be decomposed as a sum of homogeneous terms with respect to its bidegrees:
$$\Theta=\mu + \gamma + \phi + \psi,$$
with $\mu \in \mathcal{F}_{A \oplus A^*}^{1,2}, \gamma \in \mathcal{F}_{A \oplus A^*}^{2,1}, \phi \in \mathcal{F}_{A \oplus A^*}^{0,3}=\Gamma(\bigwedge^3 A^*)$ and $\psi \in \mathcal{F}_{A \oplus A^*}^{3,0}=\Gamma(\bigwedge^3 A)$.

We recall from~\cite{roythesis} that, when $\gamma = \phi = \psi =0$, $\Theta$ is a Courant structure on $(A \oplus A^*, \langle.,.\rangle)$ if and only if $(A,\mu)$ is a Lie algebroid.
Also, when $\phi = \psi =0$, $\Theta$ is a Courant structure on $(A \oplus A^*, \langle.,.\rangle)$ if and only if $\left((A,\mu),(A^*,\gamma)\right)$ is a Lie bialgebroid.

\

\subsection{Deformation of Courant structures by skew-symmetric tensors}

Let $(E,\langle .,. \rangle,\Theta)$ be a pre-Courant algebroid and $J:E \to E$  a vector bundle endomorphism of $E$.
The \emph{deformation} of the Dorfman bracket $[.,.]$ by $J$ is defined, for all sections $X,Y$ of $E$, by
$$[X,Y]_{J} =[JX,Y]+[X,JY]-J[X,Y].$$

The $(1,1)$-tensors on $E$ will be seen as vector bundle endomorphisms of $E$. A $(1,1)$-tensor $J:E \to E$ is said to be {\em skew-symmetric} if $$\langle Ju,v \rangle + \langle u,Jv \rangle =0,$$
for all $u,v \in E$.
If we consider the endomorphism $J^*$ defined by $ \langle u, J^*v \rangle = \langle Ju,v \rangle $, then $J$ is skew-symmetric if and only if  $J+J^*=0$.
If $J$ is skew-symmetric, then $[.,.]_J$ satisfies (\ref{pre_Courant1}) and (\ref{pre_Courant2}), so that $(\rho \smc J, [.,.]_{J})$ is a pre-Courant structure on $(E, \langle.,.\rangle)$ \footnote{In fact, it suffices that $J$ satisfies the condition $J+ J^*= \lambda id_E$, for some $\lambda \in \Rr$, to guarantee that  $(\rho \smc J, [.,.]_J)$ is a pre-Courant structure on $(E, \langle.,.\rangle)$, see \cite{cgrabm04}.}.

When the $(1,1)$-tensor $J:E \to E$ is skew-symmetric, the deformed pre-Courant structure $(\rho \smc J, [.,.]_J)$ is given, in supergeometric terms, by $\Theta_{J}:=\{J,\Theta\}\in\mathcal{F}_E^{3}$.
The deformation of $\Theta_J$ by the skew-symmetric $(1,1)$-tensor $I$ is denoted by $\Theta_{J,I}$, i.e. $\Theta_{J,I}=\{I,\{J, \Theta \}\}$, while the deformed Dorfman bracket $([.,.]_{J})_{I}$ is denoted by $[.,.]_{J,I}$.

Recall that a vector bundle endomorphism $I:E \to E$ is a {\em Nijenhuis} tensor on the Courant algebroid $(E, \Theta)$ if its torsion vanishes. The torsion ${\mathcal T}_{\Theta}I$ is given, for all $X,Y \in \Gamma (E)$, by

\begin{equation*} \label{torsion}
{\mathcal T}_{\Theta}I (X,Y)= [IX,IY]-I([X,Y]_{I})
\end{equation*}
or, equivalently, by
\begin{equation} \label{second_def_torsion}
{\mathcal T}_{\Theta}I (X,Y)=\frac{1}{2}([X,Y]_{{I,I}}-[X,Y]_{{I^2}}),
\end{equation}
where $I^2=I \smc I$.
When $I$ is skew-symmetric and $I^2= \alpha\, id_E$, for some $\alpha \in \Rr$, (\ref{second_def_torsion}) can be written, in supergeometric terms, as follows \cite{grab}:
\begin{equation} \label{supergeometric_torsion}
{\mathcal T}_{\Theta}I= \frac{1}{2}(\Theta_{I,I}-\alpha \Theta).
\end{equation}

When $(E, \Theta)$ is a pre-Courant algebroid, the definition of Nijenhuis tensor is the same as in the case of a Courant algebroid.

\begin{ex}\label{ex:central}
For every Lie algebra ${\mathcal G}$, any linear operator $I$ valued in the center and such that the kernel of $I^2$ contains the commutator
$ [{\mathcal G}, {\mathcal G}]$ is a Nijenhuis operator.
\end{ex}

The notion of {\em deforming} tensor for a Courant structure $\Theta$ on $E$ was introduced in \cite{YKS11}. The definition holds in the case of a pre-Courant algebroid and it will play an important role in this article.
\begin{defn}  \label{deforming}
A skew-symmetric $(1,1)$-tensor $J$ on $(E,\Theta)$ is said to be {\em deforming for} $\Theta$ if $\Theta_{J,J} =\eta \Theta$, for some $\eta \in \Rr$.
\end{defn}

\begin{rem}
If $I$ is Nijenhuis for $\Theta$ and satisfies $I^2= \eta id_E$, for some $\eta \in \Rr$, then, from (\ref{supergeometric_torsion}), we have ${\mathcal T}_{\Theta}I=0 \Rightarrow \Theta_{I,I} =\eta \, \Theta$, i.e. $I$ is deforming for $\Theta$. This was also noticed in \cite{YKS11}.
\end{rem}

When $E= A \oplus A^*$ and $\langle.,.\rangle$ is the usual symmetric bilinear form, a skew-symmetric $(1,1)$-tensor $J:A \oplus A^* \to A \oplus A^*$ is of the type
\begin{equation}  \label{skewJ}
J= \left(
\begin{array}{cc}
 N & \pi^{\sharp} \\
\omega^{\flat} & -N^*
\end{array}
\right),
\end{equation}
with $N: A \to A, \pi \in \Gamma(\bigwedge^2 A)$ and $\omega \in \Gamma(\bigwedge^2 A^*)$. In the supergeometric framework, $J$ corresponds to the function $N+\pi+\omega$, which we also denote by $J$. Therefore, we have $\Theta_{J}=\{N+\pi+\omega, \Theta\}$.

Now, we present several examples of skew-symmetric tensors which are deforming or/and Nijenhuis, in the case where $(E=A \oplus A^*, \Theta)$ is a Courant algebroid with $\Theta=\mu$ and $\mu$ a Lie algebroid on $A$.

\begin{ex} \label{example1}

\

{\bf a)} Let $\pi$ be a bivector on $A$ and $J_\pi=
\left(
\begin{array}{cc}
 0 & \pi \\
0 & 0
\end{array}
\right).$  Then, $J_\pi$ is deforming for $\Theta=\mu$ if and only if $\pi$ is a Poisson bivector on the Lie algebroid $(A, \mu)$.

If $\pi$ is a Poisson bivector on $(A, \mu)$ then, denoting by $[.,.]_\mu$ the Gerstenhaber bracket on $\Gamma(\bigwedge^{\bullet} A)$, we have $0=[\pi, \pi]_\mu = \{\pi, \{\pi, \mu\}\}=\mu_{J_\pi, J_\pi}$, so that $J_\pi$ is deforming for $\mu$. If $J_\pi$ is deforming for $\mu$, then $\mu_{J_\pi, J_\pi}= \eta \, \mu$, with $\eta \in \Rr$. Since $\mu$ and  $\mu_{J_\pi, J_\pi}$ do not have the same bidegree, we get $\mu_{J_\pi, J_\pi}= \eta \, \mu \Leftrightarrow (\eta =0$ and  $\{\pi, \{\pi, \mu\}\}=0)$. Thus, $\pi$ is a Poisson bivector on the Lie algebroid $(A, \mu)$.

\

{\bf b)} Let $J_\pi$ be as in a).   Then, $J_\pi$ is Nijenhuis for $\Theta=\mu$ if and only if $\pi$ is a Poisson bivector on the Lie algebroid $(A, \mu)$.

 We remark that $J_\pi \smc J_\pi=0$ and so, using (\ref{supergeometric_torsion}) with $\lambda=0$, we deduce that the torsion of $J_\pi$ is given by
${\mathcal T}_\mu J_\pi=\frac{1}{2} \{\pi, \{\pi, \mu\}\}$. Therefore,
${\mathcal T}_\mu J_\pi=0 \Leftrightarrow [\pi, \pi]_\mu=0$.

\

{\bf c)} Let $\omega$ be a $2$-form on $A$. Then,
$J_\omega = \left(
\begin{array}{cc}
 0 & 0 \\
\omega & 0
\end{array}
\right)$
is a deforming and a Nijenhuis tensor for the Courant algebroid $(A \oplus A^*, \mu)$.

This is an immediate consequence of  $J_\omega \smc J_\omega=0$ and $\mu_{J_{\omega}, J_{\omega}}=\{\omega, \{ \omega, \mu \} \}=0$.

\

{\bf d)} Let $N:A \to A$ be a $(1,1)$-tensor on  $A$, such that $N^2= \alpha \, id_A$, for some $\alpha \in \Rr$. Then, $I_N=
\left(
\begin{array}{cc}
 N & 0 \\
0 & -N^*
\end{array}
\right)$  is a Nijenhuis tensor for the Courant algebroid $(A \oplus A^*, \mu)$ if and only if $N$ is Nijenhuis tensor for the Lie  algebroid $(A, \mu)$ \cite{YKS11}.

\

{\bf e)} Let $\pi$ be a bivector on $A$
and $N:A \to A$  a $(1,1)$-tensor on  $A$. Then,
$J=\left(
\begin{array}{cc}
 N & \pi \\
0 & -N^*
\end{array}
\right)$ is deforming for $\mu$   if and only if
$
\left\{
 \begin{array}{l} N \, \mbox{\rm is deforming for}\, \mu\\
 \mu_{N,\pi}+\mu_{\pi,N}=0
 \\
 \pi \, \mbox{\rm is Poisson for} \, \mu.
  \end{array}
  \right.$

 We have,
 \begin{align*}
\mu_{{J,J}} & = \{N+\pi, \{N+\pi, \mu \} \} \\
& =\{N, \{N, \mu \} \}+ \{ \pi, \{N, \mu \} \}+ \{N, \{ \pi, \mu \} \}+ \{ \pi, \{ \pi, \mu \} \}\\
&= \mu_{N,N}+\mu_{N,\pi}+\mu_{\pi,N}+ \mu_{\pi,\pi}
\end{align*}
and, by counting the bidegrees, we deduce that
$\mu_{J,J}= \eta \, \mu$ if and only if
\begin{equation*}
\mu_{N,N}= \eta \, \mu, \, \, \, \, \,
 \mu_{N,\pi}+\mu_{\pi,N}=0
 , \,\,\, \, \,
[\pi, \pi ]_\mu=0.
\end{equation*}
\vspace*{-1cm}
\flushright{$\diamondsuit$}
 \end{ex}

\

 Let us consider the Courant algebroid $(A \oplus A^*, \mu + \gamma)$, which is the double of a Lie bialgebroid $((A, \mu), (A^*, \gamma))$ and the skew-symmetric  $(1,1)$-tensor $J: A \oplus A^* \to A \oplus A^*$, given by

\begin{equation}  \label{Jdef}
J= \left(
\begin{array}{cc}
\frac{1}{2}\,id_A & \pi \\
0 & -\frac{1}{2}\,id_{A^*}
\end{array}
\right).
\end{equation}

\begin{prop}
Let $((A,\mu), (A^*,\gamma))$ be a Lie bialgebroid. Then, the $(1,1)$-tensor $J$ given by (\ref{Jdef})  is a deforming tensor for the Courant structure $\mu + \gamma$ if and only if $\pi$ is a solution of the Maurer-Cartan equation
\begin{equation*}  \label{MC}
\d_\gamma \pi= \frac{1}{2} [\pi, \pi]_{\mu}.
\end{equation*}
\end{prop}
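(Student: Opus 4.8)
The plan is to pass to the supergeometric picture and to compute $\Theta_{J,J}=\{J,\{J,\Theta\}\}$ explicitly, then read off the deforming condition $\Theta_{J,J}=\eta\,\Theta$ by matching bidegrees. Following (\ref{skewJ}), I would first record that $J$ corresponds to the function $N+\pi$ with $N=\tfrac12\,id_A$ and $\omega=0$; write $e:=\tfrac12\,id_A$ and $\Theta=\mu+\gamma$. The one fact about $e$ that drives everything is that $id_A$ acts on the big bracket as a weight operator: for $f\in\mathcal{F}^{k,l}$ one has $\{id_A,f\}=(k-l)\,f$, so $\{e,f\}=\tfrac12(k-l)\,f$. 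This follows from a one-line computation in the local coordinates $\theta^a,\xi_a$ (in which $id_A=\theta^a\xi_a$), together with the fact that the bracket shifts bidegree by $(-1,-1)$.

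Next I would expand $\{J,\Theta\}=\{e,\mu\}+\{e,\gamma\}+\{\pi,\mu\}+\{\pi,\gamma\}$ and apply $\{J,\cdot\}$ a second time, using the weight rule on the bidegrees $\mu\in\mathcal{F}^{1,2}$, $\gamma\in\mathcal{F}^{2,1}$, $\{\pi,\mu\}\in\mathcal{F}^{2,1}$ and $\{\pi,\gamma\}\in\mathcal{F}^{3,0}$. Two observations organize the bookkeeping: the term $\{\pi,\{\pi,\gamma\}\}$ would lie in the empty space $\mathcal{F}^{4,-1}$, hence vanishes; and the two bidegree-$(2,1)$ contributions built from $\{\pi,\mu\}$ cancel (equivalently, graded Jacobi gives $\{e,\{\pi,\mu\}\}=\{\pi,\mu\}+\{\pi,\{e,\mu\}\}$, which absorbs the second of them). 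Collecting the surviving terms yields
\[
\Theta_{J,J}=\frac14\,\mu+\frac14\,\gamma+2\,\{\pi,\gamma\}+\{\pi,\{\pi,\mu\}\},
\]
whose four summands are homogeneous of bidegrees $(1,2)$, $(2,1)$, $(3,0)$, $(3,0)$ respectively.

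Finally I would impose $\Theta_{J,J}=\eta\,\Theta=\eta\mu+\eta\gamma$ and match bidegrees. The $(1,2)$ and $(2,1)$ parts both force $\eta=\tfrac14$ (as they must, since $e=\tfrac12\,id_A$ is Nijenhuis with $e^2=\tfrac14\,id_E$, hence deforming with $\eta=\tfrac14$ by (\ref{supergeometric_torsion})), while the entire bidegree-$(3,0)$ component is required to vanish:
\[
2\,\{\pi,\gamma\}+\{\pi,\{\pi,\mu\}\}=0.
\]
Identifying $\{\pi,\{\pi,\mu\}\}=[\pi,\pi]_\mu$ as in Example~\ref{example1}\,a) and $\{\pi,\gamma\}=-\{\gamma,\pi\}=-\d_\gamma\pi$ (the sign coming from the graded symmetry of the big bracket on functions of total degrees $2$ and $3$), this reads $-2\,\d_\gamma\pi+[\pi,\pi]_\mu=0$, that is $\d_\gamma\pi=\tfrac12[\pi,\pi]_\mu$. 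Since every step is reversible, the Maurer--Cartan equation is equivalent to $\Theta_{J,J}=\tfrac14\Theta$, i.e.\ to $J$ being deforming for $\mu+\gamma$.

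I expect the only delicate point to be the sign and normalization bookkeeping: getting the weight action of $id_A$ right, checking that the bidegree-$(2,1)$ cross terms genuinely cancel rather than add, and pinning down the sign relating $\{\pi,\gamma\}$ to $\d_\gamma\pi$. Reassuringly, the final Maurer--Cartan equation is insensitive to the overall sign chosen for the weight operator, provided it is used consistently with the definition of $\d_\gamma$, so the structural computation is robust.
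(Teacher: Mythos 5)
Your proof is correct and follows essentially the same route as the paper: expand $\Theta_{J,J}=\{J,\{J,\mu+\gamma\}\}$ using the weight rule for $id_A$, note that the $\{\pi,\mu\}$ cross terms cancel and that $\{\pi,\{\pi,\gamma\}\}$ dies for bidegree reasons, then match bidegrees against $\eta(\mu+\gamma)$ to force $\eta=\tfrac14$ together with the Maurer--Cartan equation. The only (harmless) discrepancy is that your weight convention $\{id_A,f\}=(k-l)f$ is opposite to the paper's $\{id_A,u\}=(q-p)u$ for $u$ of bidegree $(p,q)$, which flips the sign of the intermediate term $\pm 2\{\pi,\gamma\}$; as you observe, this is absorbed by the consistent identification of $\d_\gamma\pi$ with $\pm\{\pi,\gamma\}$, so both computations land on the same equation $\d_\gamma\pi=\tfrac12[\pi,\pi]_\mu$.
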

\begin{proof}
The $(1,1)$-tensor $J=\frac{1}{2}\,id_A+\pi$ is a deforming tensor for $ \mu + \gamma$ if there exists $\eta \in \Rr$ such that
\begin{equation*}
\left\{ \frac{1}{2}\,id_A +\pi, \left\{ \frac{1}{2}\,id_A +\pi, \mu + \gamma \right\} \right\} =\eta (\mu + \gamma).
\end{equation*}
We have,
\begin{eqnarray*}
 \lefteqn{\left\{\frac{1}{2}\,id_A +\pi, \left\{\frac{1}{2}\,id_A +\pi, \mu + \gamma \right\}\right\}  = \frac{1}{4} \{ \,id_A, \{ \,id_A, \mu \} + \{ \,id_A, \gamma \} \}} \\
& & + \frac{1}{2} \{ \,id_A, \{ \pi, \mu \} + \{ \pi, \gamma \} \}+
 \frac{1}{2}\{ \pi , \{
 \,id_A , \mu \} + \{ \,id_A , \gamma \} \}
+\{ \pi, \{ \pi, \mu \} + \{ \pi, \gamma \} \} \\
&=& \frac{1}{4} (\mu + \gamma) -2 \{\pi, \gamma \} - \{ \{ \pi, \mu \}, \pi \},
\end{eqnarray*}
where we used $\{id_A , u \} = (q-p) u$, for all $u$ of bidegree $(p,q)$.
So, $$\frac{1}{4} (\mu + \gamma) +2 \d_{\gamma} \pi - [\pi,\pi]_{\mu}= \eta  (\mu + \gamma)$$ if and only if
$$ \eta=\frac{1}{4} \quad {\rm and} \quad \d_\gamma \pi= \frac{1}{2} [\pi, \pi]_{\mu}.$$
\end{proof}


\section{Hierarchies of compatible tensors and structures}
We construct a hierarchy of compatible Courant structures on $(E,\langle .,. \rangle)$, that are obtained deforming an initial Courant structure by a Nijenhuis tensor. Then, we consider hierarchies of pairs of tensors which are compatible, in a certain sense, w.r.t. some deformed pre-Courant structures.

We introduce the following notation, where $I,J, \dots, K$ are skew-symmetric $(1,1)$-tensors on a pre-Courant algebroid $(E,\Theta)$:

\begin{itemize}
\item $\Theta_{I, J, \dots , T}= (((\Theta_I)_J)_{\dots})_T$;
\item $\Theta_k= \underbrace{ (((\Theta_I)_I)_{\dots})_I}_{k}= \Theta_{{\scriptsize \underbrace{I, \dots, I}_{k}}}$, $k \in \Nn$; $\Theta_0= \Theta$.
\end{itemize}

\subsection{Hierarchy of compatible Courant structures} \label{section:compatiblecourant}

In this section we construct a hierarchy of compatible Courant structures on $(E,\langle .,. \rangle)$.

The next proposition generalizes a result in \cite{magriYKS}.
\begin{prop} \label{torsions}
Let $I$ be a skew-symmetric $(1,1)$-tensor on a pre-Courant algebroid $(E,\Theta)$.  Then, for all sections $X$, $Y$ of $E$,
\begin{equation}  \label{torsion_theta}
{\mathcal T}_{{\Theta}_k}I (X,Y)= {\mathcal T}_{\Theta_{k-1}}I (IX,Y) + {\mathcal T}_{\Theta_{k-1}}I (X,IY) -I ({\mathcal T}_{\Theta_{k-1}}I (X,Y)), \, k \in \Nn.
\end{equation}
\end{prop}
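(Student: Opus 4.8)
The plan is to prove the identity entirely at the level of the Dorfman bracket, i.e.\ within the pre-Leibniz structure, without invoking the supergeometric expression (\ref{supergeometric_torsion}) for the torsion: that formula requires $I^2 = \alpha\, id_E$, which is \emph{not} assumed here, so I would work directly with brackets. Write $m_j := [.,.]^{(j)}$ for the Dorfman bracket of $\Theta_j$, so that, by the definitions recalled above, $m_k$ is the deformation of $m_{k-1}$ by $I$, namely $m_k(X,Y) = m_{k-1}(IX,Y) + m_{k-1}(X,IY) - I\,m_{k-1}(X,Y)$, while the torsion reads ${\mathcal T}_{\Theta_j} I(X,Y) = m_j(IX,IY) - I\bigl(m_j(IX,Y) + m_j(X,IY) - I\,m_j(X,Y)\bigr)$.

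The key step is to read both the deformation and the torsion as operators on the space of $\Rr$-bilinear maps $m:\Gamma(E)\times\Gamma(E) \to \Gamma(E)$. I would introduce three such operators: $R$ substituting $I$ in the first argument, $(Rm)(X,Y) = m(IX,Y)$; $S$ substituting $I$ in the second argument, $(Sm)(X,Y) = m(X,IY)$; and $P$ post-composing with $I$, $(Pm)(X,Y) = I\,m(X,Y)$. The crucial observation -- a one-line check in each case -- is that $R$, $S$ and $P$ pairwise commute, simply because they act on three independent features of $m$ (its first slot, its second slot, and its output). In these terms the deformation operator is $D_I = R + S - P$, while the torsion operator factors as ${\mathcal T} = RS - PR - PS + P^2 = (R-P)(S-P)$, the last equality using $RP = PR$.

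With this setup the proposition is immediate: since ${\mathcal T}$ and $D_I$ are both polynomials in the pairwise-commuting operators $R, S, P$, they commute, ${\mathcal T}\circ D_I = D_I \circ {\mathcal T}$. Applying this equality to $m_{k-1}$ and using $m_k = D_I(m_{k-1})$ gives ${\mathcal T}(m_k) = {\mathcal T}(D_I m_{k-1}) = D_I({\mathcal T} m_{k-1})$; spelled out, the right-hand side is exactly ${\mathcal T}_{\Theta_{k-1}}I(IX,Y) + {\mathcal T}_{\Theta_{k-1}}I(X,IY) - I\,{\mathcal T}_{\Theta_{k-1}}I(X,Y)$, which is (\ref{torsion_theta}). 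The only point requiring real care -- and the step I expect to be the main obstacle should one instead proceed by brute-force expansion -- is the commutativity of $P$ with $R$ and $S$: it is precisely what collapses the eight-term expansions of the two sides onto the same expression, and it is what lies hidden in the combinatorics of a direct calculation. Finally, I would remark that skew-symmetry of $I$ is used only to guarantee that each $\Theta_k$ is again a pre-Courant structure, so that the torsions appearing in (\ref{torsion_theta}) are well defined; the identity itself is purely algebraic and holds for any $\Rr$-linear $I$.
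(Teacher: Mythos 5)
Your proof is correct, but it is organized quite differently from the paper's. The paper proves Proposition \ref{torsions} by brute force: it substitutes the recursion $[X,Y]_k=[IX,Y]_{k-1}+[X,IY]_{k-1}-I[X,Y]_{k-1}$ into the four-term expression for ${\mathcal T}_{\Theta_k}I(X,Y)$, writes out the resulting twelve terms, and regroups them by hand into the three torsions on the right-hand side. You instead lift everything to the algebra of operators on bilinear maps, check that the substitution operators $R,S$ and the post-composition operator $P$ pairwise commute, and observe that $D_I=R+S-P$ and ${\mathcal T}=RS-PR-PS+P^2=(R-P)(S-P)$ are polynomials in these commuting operators, hence commute -- which is exactly identity (\ref{torsion_theta}) applied to $m_{k-1}$. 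The underlying cancellations are of course the same, but your packaging isolates the reason they occur (commutativity of slot-substitution with output-composition), whereas the paper leaves this implicit in the regrouping. Your version also buys something further: the factorization ${\mathcal T}=(R-P)(S-P)$ is reusable and would, for instance, streamline the paper's subsequent (and genuinely cumbersome) computation of ${\mathcal T}_\Theta I^n$; and it makes transparent that the identity is a pre-Leibniz--level statement valid for an arbitrary $(1,1)$-tensor $I$, skew-symmetry entering only so that each $\Theta_k$ is an honest pre-Courant structure -- a point consistent with the paper, whose own proof likewise never uses skew-symmetry. The paper's expansion, for its part, is more elementary and requires no auxiliary formalism.
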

\begin{proof}

Let us denote by $[.,.]_k$ the Dorfman bracket associated to $\Theta_k$. It is obvious that
$$[X,Y]_k=[IX,Y]_{k-1}+[X,IY]_{k-1}-I[X,Y]_{k-1},$$
and therefore we have,
\begin{align*}
{\mathcal T}_{{\Theta}_k}I (X,Y)& =[IX,IY]_{k} - I[IX,Y]_{k}-I[X,IY]_{k}+I^2[X,Y]_{k}\\
&= [I^2X,IY]_{k-1}-I[I^2X,Y]_{k-1}-I[IX,IY]_{k-1}+I^2[IX,Y]_{k-1}\\
& +[IX,I^2Y]_{k-1}-I[IX,IY]_{k-1}-I[X,I^2Y]_{k-1}+I^2[X,IY]_{k-1}\\
& -I([IX, IY]_{k-1}-I[IX,Y]_{k-1}-I[X,IY]_{k-1}+I^2[X,Y]_{k-1})\\
&= {\mathcal T}_{\Theta_{k-1}}I (IX,Y) + {\mathcal T}_{\Theta_{k-1}}I (X,IY) -I ({\mathcal T}_{\Theta_{k-1}}I (X,Y)).
\end{align*}
\end{proof}

\begin{cor} \label{Nijenhuis}
If $I$ is Nijenhuis for $\Theta$, then $I$ is Nijenhuis for $\Theta_k$, $\forall k \in \Nn$.
\end{cor}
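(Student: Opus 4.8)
The plan is to prove Corollary \ref{Nijenhuis} by induction on $k$, using Proposition \ref{torsions} as the single engine of the argument. The base case $k=0$ is trivial, since $\Theta_0 = \Theta$ and $I$ is Nijenhuis for $\Theta$ by hypothesis, so ${\mathcal T}_{\Theta_0} I = 0$. The inductive step is where the real content lies, but it turns out to be essentially automatic given the recursion already established.

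For the inductive step, I would assume that $I$ is Nijenhuis for $\Theta_{k-1}$, i.e. that ${\mathcal T}_{\Theta_{k-1}} I (X,Y) = 0$ for all sections $X, Y$ of $E$. The key observation is that the right-hand side of (\ref{torsion_theta}) is built \emph{entirely} out of evaluations of ${\mathcal T}_{\Theta_{k-1}} I$ on various pairs of sections, namely $(IX, Y)$, $(X, IY)$, and $(X,Y)$. Since the induction hypothesis forces each of these three terms to vanish identically, Proposition \ref{torsions} immediately yields
\begin{equation*}
{\mathcal T}_{\Theta_k} I (X,Y) = 0 + 0 - I(0) = 0
\end{equation*}
for all $X, Y \in \Gamma(E)$. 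Hence $I$ is Nijenhuis for $\Theta_k$, completing the induction.

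I do not expect any genuine obstacle here, precisely because Proposition \ref{torsions} was formulated so as to express ${\mathcal T}_{\Theta_k} I$ as a linear combination of values of ${\mathcal T}_{\Theta_{k-1}} I$, with no extra terms. The only point requiring minimal care is that the vanishing of ${\mathcal T}_{\Theta_{k-1}} I$ must be understood as vanishing on \emph{all} pairs of sections, so that in particular it applies to the arguments $(IX, Y)$ and $(X, IY)$ that appear after feeding $I$ into the first or second slot; this is automatic since $IX$ and $IY$ are again sections of $E$. Thus the corollary is a direct consequence of the recursive formula, and the induction on $k$ runs through without any computation beyond substituting zeros into the right-hand side of (\ref{torsion_theta}).
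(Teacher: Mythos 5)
Your proof is correct and follows exactly the route the paper intends: the corollary is stated as an immediate consequence of Proposition \ref{torsions}, and your induction on $k$, substituting the vanishing of ${\mathcal T}_{\Theta_{k-1}}I$ into the three terms of (\ref{torsion_theta}), is precisely that argument spelled out.
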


When $(E,\Theta)$ is a Courant algebroid, it is well known \cite{grab} that if $I$
is a skew-symmetric Nijenhuis tensor for $\Theta$, then $(E,\Theta_I)$ is a Courant algebroid. Applying (\ref{torsion_theta}) we get, by recursion:

\begin{prop} \label{courant_hierar}
Let $(E,\Theta)$ be a Courant algebroid and $I$ a skew-symmetric Nijenhuis tensor for $\Theta$. Then, $(E,\Theta_k)$ is a Courant algebroid, for all $k \in \Nn$.
\end{prop}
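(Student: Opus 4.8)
The plan is to prove Proposition~\ref{courant_hierar} by induction on $k$, using the torsion recursion formula~(\ref{torsion_theta}) from Proposition~\ref{torsions} together with the cited result that deforming a Courant algebroid by a skew-symmetric Nijenhuis tensor again yields a Courant algebroid. The base case $k=0$ is trivial, since $\Theta_0=\Theta$ is a Courant structure by hypothesis. For the inductive step, I would assume that $(E,\Theta_{k-1})$ is a Courant algebroid and that $I$ is Nijenhuis for $\Theta_{k-1}$, and then conclude that $(E,\Theta_k)=(E,(\Theta_{k-1})_I)$ is a Courant algebroid.

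The crucial observation is that the inductive step requires feeding \emph{two} facts forward, not one: I need not only that $\Theta_{k-1}$ is Courant, but also that $I$ remains Nijenhuis for $\Theta_{k-1}$, since the cited result (\cite{grab}) only guarantees that deforming by a Nijenhuis tensor preserves the Courant property. The second fact is exactly what Corollary~\ref{Nijenhuis} supplies: because $I$ is Nijenhuis for $\Theta=\Theta_0$, formula~(\ref{torsion_theta}) shows by the same recursion that $\mathcal{T}_{\Theta_k}I=0$ for all $k$, so $I$ is automatically Nijenhuis for every $\Theta_{k-1}$ in the chain. Thus the induction I would actually carry out is a joint induction (or, more cleanly, first invoke Corollary~\ref{Nijenhuis} to dispose of the Nijenhuis hypothesis uniformly, then induct only on the Courant property).

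Concretely, the argument runs as follows. By Corollary~\ref{Nijenhuis}, $I$ is a skew-symmetric Nijenhuis tensor for $\Theta_{k-1}$ for every $k\geq 1$. Assuming inductively that $(E,\Theta_{k-1})$ is a Courant algebroid, I apply the result of \cite{grab}: deforming the Courant structure $\Theta_{k-1}$ by the skew-symmetric Nijenhuis tensor $I$ produces the Courant algebroid $(E,(\Theta_{k-1})_I)=(E,\Theta_k)$. This closes the induction, giving that $(E,\Theta_k)$ is a Courant algebroid for all $k\in\Nn$.

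I do not expect any genuine obstacle here, since the proposition is explicitly framed as a recursive application of the preceding results. The only subtlety worth flagging—and the single point I would be careful to state—is the dependence structure of the induction: the preservation of the Courant (Jacobi) property at each stage relies on $I$ still being Nijenhuis at that stage, and it is Corollary~\ref{Nijenhuis}, rather than any fresh computation, that licenses this. In supergeometric terms one could equivalently note that $\{\Theta_k,\Theta_k\}=0$ follows from $\{\Theta_{k-1},\Theta_{k-1}\}=0$ and the vanishing torsion condition, but the bracket-level argument via \cite{grab} combined with Corollary~\ref{Nijenhuis} is the most economical route and matches the paper's stated intention of proving the result ``by recursion.''
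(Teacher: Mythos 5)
Your proof is correct and follows exactly the paper's intended argument: the paper states the proposition as a consequence, ``by recursion,'' of the cited result of \cite{grab} (deforming a Courant structure by a skew-symmetric Nijenhuis tensor yields a Courant structure) together with formula~(\ref{torsion_theta}), which is precisely Corollary~\ref{Nijenhuis} guaranteeing that $I$ stays Nijenhuis for each $\Theta_{k-1}$. Your explicit flagging of the two facts that must be carried through the induction is exactly the content the paper leaves implicit.
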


\

We introduce the following notation: $I^n = \underbrace{I \smc \dots \smc I}_{n}$, for $n\geq 1$ and $I^0=id_E$.

Let us compute the torsion  ${\mathcal T}_{\Theta}I^n$, for all $n \in \Nn$.

\begin{prop}
Let $I$ be a $(1,1)$-tensor on a pre-Courant algebroid $(E,\Theta)$. Then, for all sections $X$ and $Y$ of $E$,
\begin{align*}
{\mathcal T}_{\Theta}I^n(X,Y)= & {\mathcal T}_{\Theta}I(I^{n-1}X,I^{n-1}Y)+ I({\mathcal T}_{\Theta}I^{n-1}(IX, Y)+ {\mathcal T}_{\Theta}I^{n-1}(X,I Y)) \nonumber \\ & -I^2({\mathcal T}_{\Theta}I^{n-2}(IX, IY))+ I^{2n-2}({\mathcal T}_{\Theta}I(X,Y)), \,\, n\geq 2.
\end{align*}
\end{prop}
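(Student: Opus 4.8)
The plan is to prove the identity by direct expansion. I would substitute the definition of the torsion,
\[
{\mathcal T}_{\Theta} K(X,Y) = [KX,KY] - K[KX,Y] - K[X,KY] + K^2[X,Y],
\]
into each of the four summands on the right-hand side, with the appropriate choice of $K$ and of arguments, and then collect terms. Since the formula involves only the Dorfman bracket and the endomorphism $I$, the argument uses only bilinearity of the bracket and linearity of $I$; no Jacobi identity is needed, so the statement holds in the full pre-Courant (indeed pre-Leibniz) generality claimed, as in Proposition~\ref{torsions}.

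First I would expand the four summands separately. For ${\mathcal T}_{\Theta}I(I^{n-1}X, I^{n-1}Y)$ I take $K=I$ with arguments $I^{n-1}X,I^{n-1}Y$, producing the four brackets $[I^nX,I^nY]$, $I[I^nX,I^{n-1}Y]$, $I[I^{n-1}X,I^nY]$ and $I^2[I^{n-1}X,I^{n-1}Y]$. For the two middle terms $I\,{\mathcal T}_{\Theta}I^{n-1}(IX,Y)$ and $I\,{\mathcal T}_{\Theta}I^{n-1}(X,IY)$ I take $K=I^{n-1}$ and then precompose the whole expansion with $I$. For the last pair, $-I^2\,{\mathcal T}_{\Theta}I^{n-2}(IX,IY)$ and $I^{2n-2}\,{\mathcal T}_{\Theta}I(X,Y)$, I take $K=I^{n-2}$ (this requires $n\geq 2$, so that $I^{n-2}$ is defined, with the convention $I^0=id_E$) and $K=I$ respectively. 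Altogether this produces sixteen expressions, each of the form $I^a[I^bX,I^cY]$.

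The key step is then the bookkeeping: organizing the sixteen terms by their bracket type $[I^bX,I^cY]$ and checking that all of them cancel in pairs except for the four reconstructing
\[
[I^nX,I^nY] - I^n[I^nX,Y] - I^n[X,I^nY] + I^{2n}[X,Y] = {\mathcal T}_{\Theta}I^n(X,Y).
\]
Concretely: the $[I^nX,I^{n-1}Y]$ terms from the first and second summands cancel, as do the $[I^{n-1}X,I^nY]$ terms from the first and third; the $[I^{n-1}X,I^{n-1}Y]$ terms from the first and third cancel; the $[IX,I^{n-1}Y]$ and $[I^{n-1}X,IY]$ terms from the second and third cancel; the $[IX,Y]$ and $[X,IY]$ terms from the second and fourth cancel; and the $[IX,IY]$ terms from the third and fourth cancel. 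Each cross-term is annihilated by exactly one partner, and the four surviving terms are precisely those of ${\mathcal T}_{\Theta}I^n(X,Y)$.

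I expect the only genuine difficulty to be organizational: keeping track of the signs and of the powers of $I$ applied outside each bracket across all sixteen terms, so that partners are matched correctly. There is no conceptual obstruction, and the recursion is well-posed exactly under the hypothesis $n\geq 2$.
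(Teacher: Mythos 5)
Your proposal is correct and follows essentially the same route as the paper's own proof, which likewise expands each right-hand summand via the definition of the torsion and verifies that all cross terms cancel, leaving exactly $[I^nX,I^nY]-I^n([I^nX,Y]+[X,I^nY])+I^{2n}[X,Y]={\mathcal T}_{\Theta}I^n(X,Y)$. Two harmless bookkeeping slips: the expansion produces twenty terms, not sixteen (the middle summand contributes eight), and the $[I^{n-1}X,I^nY]$ cancellation pairs the first summand with the second one (the ${\mathcal T}_{\Theta}I^{n-1}(X,IY)$ part), not with the third.
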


\begin{proof}
Using the definition of Nijenhuis torsion we have, for all sections $X$ and $Y$ of $E$,

\begin{align}  \label{t1}
{\mathcal T}_{\Theta}I(I^{n-1}X,I^{n-1}Y)= & [I^n X,I^n Y]
-I([I^n X,I^{n-1}Y]+ [I^{n-1}X,I^{n}Y]) \nonumber \\
& +I^2 [I^{n-1}X,I^{n-1}Y];
\end{align}

\begin{align}  \label{t2}
I({\mathcal T}_{\Theta}I^{n-1}(IX,Y)+&{\mathcal T}_{\Theta} I^{n-1}(X,IY) ) = I([I^n X, I^{n-1}Y]+ [I^{n-1} X, I^{n}Y]) \nonumber \\
& -I^n([I^n X, Y]+ [I X, I^{n-1}Y]+ [I^{n-1} X, IY]+[X,I^nY])\nonumber\\ & + I^{2n-1}([IX,Y]+[X,IY]);
\end{align}

 \begin{align}  \label{t3}
-I^2 (  {\mathcal T}_{\Theta}I^{n-2}(IX,IY)) =&- I^2 [I^{n-1} X, I^{n-1}Y] +I^n( [I^{n-1} X, IY]+ [I X, I^{n-1}Y]) \nonumber \\
& -I^{2n-2} [IX,IY];
 \end{align}
 and
 \begin{equation}  \label{t4}
 I^{2n-2} (  {\mathcal T}_{\Theta}I(X,Y))= I^{2n-2}[IX,IY]-  I^{2n-1}([IX,Y]+[X,IY])
+ I^{2n}[X,Y].
 \end{equation}

 The sum of the right hand sides of equations (\ref{t1}), (\ref{t2}), (\ref{t3}) and (\ref{t4}) gives
 \begin{equation*}
 [I^nX,I^nY]-I^n([I^nX,Y]+ [X,I^nY])+I^{2n}[X,Y]={\mathcal T}_{\Theta}I^n(X,Y).
 \end{equation*}
\end{proof}

As an immediate consequence of the previous proposition, we have the following:

\begin{cor}\label{hierarchy_Nijenhuis}
If $I$ is a Nijenhuis tensor for $\Theta$, then $I^n$ is a Nijenhuis tensor for $\Theta$, for all $n \in \Nn$.
\end{cor}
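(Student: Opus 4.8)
The plan is to proceed by induction on $n$, feeding on the recursive identity established in the preceding proposition. That identity writes ${\mathcal T}_{\Theta}I^n(X,Y)$ as a sum of four terms, each of which is (a power of $I$ applied to) one of ${\mathcal T}_{\Theta}I$, ${\mathcal T}_{\Theta}I^{n-1}$, or ${\mathcal T}_{\Theta}I^{n-2}$ evaluated at $I$-twisted arguments. Consequently, as soon as the torsions of $I$, of $I^{n-1}$ and of $I^{n-2}$ all vanish, every term on the right-hand side vanishes and so does ${\mathcal T}_{\Theta}I^n$. Since the recursion reaches back two steps, the induction requires two base cases rather than one.

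First I would settle the base cases. For $n=1$ the assertion is precisely the hypothesis that $I$ is Nijenhuis for $\Theta$. For $n=0$ I would observe that $I^0=id_E$ has vanishing torsion: the deformed bracket satisfies $[X,Y]_{id_E}=[id_E X,Y]+[X,id_E Y]-id_E[X,Y]=[X,Y]$, whence ${\mathcal T}_{\Theta}id_E(X,Y)=[X,Y]-[X,Y]=0$ for all sections $X,Y$ of $E$. This base case is exactly what is needed to start the recursion at $n=2$, where the term ${\mathcal T}_{\Theta}I^{n-2}$ is the torsion of $I^0$.

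For the inductive step, fix $n\geq 2$ and assume ${\mathcal T}_{\Theta}I^{n-1}=0$ and ${\mathcal T}_{\Theta}I^{n-2}=0$. Combined with ${\mathcal T}_{\Theta}I=0$, which holds by hypothesis, this makes all four summands in the formula of the previous proposition vanish identically, so ${\mathcal T}_{\Theta}I^n(X,Y)=0$ for all $X,Y\in\Gamma(E)$, i.e. $I^n$ is Nijenhuis for $\Theta$. I do not expect any genuine obstacle here: the entire computational content sits in the preceding proposition, and the corollary is a formal consequence of it once the two base cases are in hand. The only mild subtlety worth flagging is the two-step nature of the recursion, which is why $n=0$ must be checked alongside $n=1$.
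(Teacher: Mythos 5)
Your proof is correct and is exactly the argument the paper intends: the corollary is stated there as an ``immediate consequence'' of the preceding recursion formula, and your two-base-case induction (with the harmless check that $I^0=id_E$ has vanishing torsion) is just that consequence written out in full.
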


\begin{prop}  \label{Nijenhuis_theta_m}
Let $I$ be a skew-symmetric $(1,1)$-tensor on a pre-Courant algebroid $(E,\Theta)$. If $I$ is Nijenhuis for $\Theta$, then $I^n$ is Nijenhuis for $\Theta_k$, for all $n,k \in \Nn$.
\end{prop}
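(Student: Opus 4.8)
The plan is to combine the two hierarchy results already established, namely Corollary~\ref{Nijenhuis} and Corollary~\ref{hierarchy_Nijenhuis}, rather than to attempt a direct torsion computation involving both $I^n$ and the deformed structure $\Theta_k$ simultaneously. The key observation is that the two operations---replacing $\Theta$ by $\Theta_k$ (deforming the Courant structure $k$ times by $I$) and replacing $I$ by $I^n$ (iterating the tensor)---are governed by independent recursions, so I would like to chain them.

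First I would apply Corollary~\ref{Nijenhuis}: since $I$ is Nijenhuis for $\Theta$, it is Nijenhuis for $\Theta_k$ for every $k \in \Nn$. This disposes of the $n=1$ case and, more importantly, gives me a Nijenhuis tensor (namely $I$ itself) for the fixed pre-Courant structure $\Theta_k$. Now the situation for $\Theta_k$ is exactly the hypothesis of Corollary~\ref{hierarchy_Nijenhuis}, but with $\Theta$ replaced by $\Theta_k$: I have a tensor $I$ that is Nijenhuis for the pre-Courant algebroid $(E,\Theta_k)$. Applying Corollary~\ref{hierarchy_Nijenhuis} verbatim to $(E,\Theta_k)$ then yields that $I^n$ is Nijenhuis for $\Theta_k$, for all $n \in \Nn$. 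Since $k$ was arbitrary, this gives the claim for all $n,k \in \Nn$.

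The one point that requires care---and the only place where I expect a genuine obstacle---is the legitimacy of invoking Corollary~\ref{hierarchy_Nijenhuis} with the base structure $\Theta_k$ in place of $\Theta$. That corollary and the proposition preceding it were stated and proved for a general pre-Courant algebroid $(E,\Theta)$, and $\Theta_k = \{I,\{\dots\{I,\Theta\}\dots\}\}$ is again a function in $\mathcal{F}_E^3$, hence again a pre-Courant structure on $(E,\langle .,.\rangle)$ (this is exactly the content of the deformation construction in Section~2.2, valid because $I$ is skew-symmetric). Thus the hypotheses of Corollary~\ref{hierarchy_Nijenhuis} are met by $(E,\Theta_k)$, and the argument applies without modification. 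I would make this explicit to justify the chaining, since the whole proof is otherwise a single line.

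In summary, the proof is: $I$ Nijenhuis for $\Theta$ $\Rightarrow$ $I$ Nijenhuis for $\Theta_k$ (Corollary~\ref{Nijenhuis}) $\Rightarrow$ $I^n$ Nijenhuis for $\Theta_k$ (Corollary~\ref{hierarchy_Nijenhuis} applied to the pre-Courant algebroid $(E,\Theta_k)$), for all $n,k \in \Nn$.
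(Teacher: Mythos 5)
Your proof is correct and is essentially identical to the paper's own argument, which also chains Corollary~\ref{Nijenhuis} (giving $I$ Nijenhuis for $\Theta_k$) with Corollary~\ref{hierarchy_Nijenhuis} applied to the pre-Courant algebroid $(E,\Theta_k)$. Your explicit remark that $\Theta_k$ is again a pre-Courant structure (since $I$ is skew-symmetric) is a sound justification of the step the paper leaves implicit.
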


\begin{proof}
Let $I$ be a Nijenhuis tensor for $\Theta$. Then, according to Corollary \ref{Nijenhuis}, $I$ is Nijenhuis for $\Theta_k$, for all $k \in \Nn$. Applying Corollary \ref{hierarchy_Nijenhuis}, the result follows.
\end{proof}

\

Recall that
two Courant structures $\Theta_1$ and $\Theta_2$ on a vector bundle $(E, \langle \, ,\, \rangle)$ are said to be {\em compatible} if their sum $\Theta_1 + \Theta_2$ is a Courant structure on $(E, \langle \, ,\, \rangle)$.
As an immediate consequence, we have that $\Theta_1$ and $\Theta_2$ are compatible if and only if
 \begin{equation*} \label{compatible}
\{\Theta_1 , \Theta_2 \}=0.
\end{equation*}
Two arbitrary pre-Courant structures $\Theta_1$ and $\Theta_2$ on $(E, \langle \, ,\, \rangle)$ are compatible, in the sense that the sum  $\Theta_1 + \Theta_2$ is always a pre-Courant structure.

\begin{thm}  \label{hierarchycourant}
Let $I$ be a skew-symmetric $(1,1)$-tensor on a Courant algebroid $(E,\Theta)$. If $I$ is Nijenhuis for $\Theta$,
 then the Courant structures $\Theta_k$ and $\Theta_m$ on $(E, \langle \, ,\, \rangle)$ are compatible, for all $k,m \in \Nn$.

In particular, $\Theta$ is compatible with $\Theta_k$, for all $k \in \Nn$.
\end{thm}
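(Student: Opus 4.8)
The plan is to work entirely in the supergeometric formalism and to reduce the statement to the vanishing of the big brackets $\{\Theta_k,\Theta_m\}$. Since each $\Theta_k$ is a Courant structure by Proposition~\ref{courant_hierar}, the notion of compatibility makes sense and, by the characterization recalled above, compatibility of $\Theta_k$ and $\Theta_m$ is equivalent to $\{\Theta_k,\Theta_m\}=0$. I would first record three facts about the operator $D:=\{I,\,\cdot\,\}$ on $\mathcal{F}_E$. As $I$ is a skew-symmetric $(1,1)$-tensor it lies in $\mathcal{F}_E^2$, so $D$ preserves degree and $\Theta_k=D^k\Theta\in\mathcal{F}_E^3$ for every $k$, with $\Theta_{k+1}=\{I,\Theta_k\}$. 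Moreover $I$ has even degree, so $D$ is an (even) derivation of the big bracket; applied to $\{\Theta_k,\Theta_m\}$ this yields the key identity
\[
\{I,\{\Theta_k,\Theta_m\}\}=\{\Theta_{k+1},\Theta_m\}+\{\Theta_k,\Theta_{m+1}\}.
\]
Finally, the big bracket of two functions of degree $3$ is symmetric, so $\{\Theta_k,\Theta_m\}=\{\Theta_m,\Theta_k\}$, and Proposition~\ref{courant_hierar} gives $\{\Theta_j,\Theta_j\}=0$ for all $j$.

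I would then set $f(k,m):=\{\Theta_k,\Theta_m\}$ and prove $f(k,m)=0$ by induction on $N:=k+m$. The base case $N=0$ is $\{\Theta,\Theta\}=0$, which holds because $(E,\Theta)$ is a Courant algebroid. For the inductive step, assume $f$ vanishes on all levels below $N$. Applying the key identity at $(k-1,m)$ and using $f(k-1,m)=0$ (a term at level $N-1$) gives $0=\{I,f(k-1,m)\}=f(k,m)+f(k-1,m+1)$, hence the antidiagonal relation $f(k,m)=-f(k-1,m+1)$ for all $k\geq 1$ with $k+m=N$. Iterating this relation along the antidiagonal yields $f(a,N-a)=(-1)^{a}\,f(0,N)$ for every $a\in\{0,\dots,N\}$.

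It then remains to show that one term on this antidiagonal vanishes, which forces $f(0,N)=0$ and hence $f(a,N-a)=0$ for all $a$. If $N=2l$ is even, the midpoint gives $f(l,l)=(-1)^{l}f(0,N)$, and since $\{\Theta_l,\Theta_l\}=0$ we get $f(0,N)=0$. If $N$ is odd, I would instead combine the symmetry $f(N,0)=f(0,N)$ with the value $f(N,0)=(-1)^{N}f(0,N)=-f(0,N)$ coming from the antidiagonal relation, so that $2f(0,N)=0$ and again $f(0,N)=0$. This completes the induction, and the special case $m=0$ gives the compatibility of $\Theta$ with each $\Theta_k$.

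The only genuinely delicate point I anticipate is the sign bookkeeping in the graded Poisson structure: one must verify that $D$ is an \emph{even} derivation and that degree-$3$ functions bracket symmetrically, for it is precisely this symmetry, together with the diagonal vanishing $\{\Theta_j,\Theta_j\}=0$ furnished by Proposition~\ref{courant_hierar}, that closes the induction — the derivation identity alone does not pin down the cross-brackets. I note finally that the Nijenhuis hypothesis on $I$ enters the argument only through Proposition~\ref{courant_hierar}, i.e.\ through the fact that each deformed structure $\Theta_k$ is itself Courant.
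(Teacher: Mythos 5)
Your proof is correct and follows essentially the same route as the paper's: induction on $k+m$, using the derivation property of $\{I,\cdot\}$ (i.e.\ the graded Jacobi identity) to walk along the antidiagonal $f(k,m)=-f(k-1,m+1)$, and closing the induction with the diagonal vanishing $\{\Theta_l,\Theta_l\}=0$ supplied by Proposition~\ref{courant_hierar}. The only cosmetic difference is the odd case: the paper terminates at the near-diagonal pair via $\{\Theta_{l+1},\Theta_l\}=\tfrac12\{I,\{\Theta_l,\Theta_l\}\}=0$, whereas you use the endpoint symmetry $f(N,0)=f(0,N)$; both steps rest on the same graded symmetry of the big bracket on degree-$3$ elements.
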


\begin{proof}
First, we remark that if $m=k$, then we have $\{\Theta_m, \Theta_m \}=0$ by Proposition \ref{courant_hierar}. Also,
for any Courant structure $\Theta$ and any skew-symmetric $(1,1)$-tensor $I$, the relation $\{\Theta, \Theta_I\}=0$ follows from the Jacobi
 identity and the graded symmetry of the Poisson bracket.
We use induction on $m+k$ to finish the proof.

\noindent Case $m+k=2$:

\begin{itemize}
\item i) $m=k=1$,
$$\{\Theta_I,\Theta_I \}=0;$$
\item ii) $m=2$, $k=0$,
$$\{\Theta_{I,I}, \Theta \}= \{I, \{\Theta,\Theta_I\}\}-\{\Theta_I,\Theta_I \}=0.$$
\end{itemize}

Now, suppose that $\{\Theta_m,\Theta_k\}=0$ holds with $m+k=s-1$ and take $m$ and $k$ such that $m+k=s$.
\begin{itemize}
\item i) if $m=k$, we already noticed that $\{\Theta_m,\Theta_m\}=0$;
\item ii) if $m \neq k$, suppose that $m>k$. Then,
\begin{align*}
\{\Theta_{m},\Theta_{k} \}& = \{\{I, \Theta_{m-1}\}, \Theta_k \}= \{I, \{\Theta_k, \Theta_{m-1}\}\}- \{\Theta_{k+1}, \Theta_{m-1} \}\\
&= -\{\Theta_{m-1},\Theta_{k+1} \} \\
&= -\{I, \{ \Theta_{m-2}, \Theta_{k+1} \}\}+ \{\Theta_{m-2},\Theta_{k+2} \}\\
&= \{\Theta_{m-2},\Theta_{k+2} \}.
\end{align*}

\noindent Applying the Jacobi identity successively, we get

\begin{align*}
\{ \Theta_m,\Theta_k \}&= \left\{
 \begin{array}{lll}
 (-1)^{m-l} \{ \Theta_l, \Theta_l \},& {\mbox {\rm if} \,\,  m+k=2l} \\
(-1)^{m-(l+1)}  \{ \Theta_{l+1}, \Theta_l \}, &  {\mbox {\rm if} \,\, m+k=2l+1}
\end{array}
 \right. \\
&= \left\{
 \begin{array}{lll}
0,& {\mbox {\rm if} \,\,  m+k=2l} \\
(-1)^{m-(l+1)} \frac{1}{2}\{I, \{ \Theta_{l}, \Theta_l \}\}=0, &  {\mbox {\rm if} \,\, m+k=2l+1}.
\end{array}
 \right.
\end{align*}
\end{itemize}
\end{proof}

\begin{rem}
The statements of Theorem \ref{hierarchycourant} still hold if we replace the assumption of $I$ being Nijenhuis for $\Theta$ by $I$ deforming for $\Theta$. In fact, if $\Theta_{I,I}=\eta \, \Theta$, for some $\eta \in \Rr$, then, a straightforward computation provides
$$\Theta_{2k}=\eta^k \Theta, \; \; \; \; \Theta_{2k+1}=\eta^k \Theta_I, \,\,\, {\hbox{\textrm for all}} \,\,\, k \in \Nn.$$
\end{rem}

We investigated so far the Courant structure $\Theta_n $ obtained by deforming $n$ times $\Theta$ by a skew-symmetric tensor $I$. It is logical to ask what happens when one deforms $\Theta$ by $I^n$.  The answer, which is given in the next proposition, is that we get precisely the same pre-Courant structure $\Theta_n $. However, this structure
can not be written as $\Theta_{I^n}$ for even $n$, since $I^n $ is not a skew-symmetric $(1,1)$-tensor. We bypass this difficulty by considering directly the Dorfman brackets, rather than the functions of degree $3$ associated with.

\begin{prop}  \label{lemA}
Let $(\rho, [.,.])$ be a pre-Courant structure on $(E, \langle.,.\rangle)$ and $I$  a  $(1,1)$-tensor on  $E$. Then, for all sections $X$ and $Y$ of $E$,
\begin{itemize}
\item[a)]
$
\displaystyle{[X,Y]_{I^{2n+1}}= [X,Y]_{I^{2n},I}-\sum_{\stackrel{
 0\leq i,j \leq 2n-1}{
{\scriptscriptstyle i+j=2n-1}}} I^j( {\mathcal T}_{\Theta}I(I^i X,Y)+  {\mathcal T}_{\Theta}I(X, I^i Y))};
$

\

\item[b)]
If $I$ is Nijenhuis for $(\rho, [.,.])$ then, for any $n \in \Nn$,

$[X,Y]_{I^{n}}=[X,Y]_{{{\scriptsize \underbrace{I, \dots,I}_{n}}}}$;

\

\item[c)]
If $I$ is Nijenhuis for $(\rho, [.,.])$ then, for any $m,n \in \Nn$,

$[X,Y]_{I^m,I^n}=[X,Y]_{I^{m+n}}.$
\end{itemize}
\end{prop}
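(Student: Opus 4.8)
The plan is to prove (a) as an unconditional identity between Dorfman brackets, and then to deduce (b) and (c) from its Nijenhuis specialisation together with Corollary~\ref{Nijenhuis}. All three parts can be handled purely at the level of brackets, using only the definition $[X,Y]_K=[KX,Y]+[X,KY]-K[X,Y]$ of the deformation by a $(1,1)$-tensor $K$ and the definition ${\mathcal T}_{\Theta}I(X,Y)=[IX,IY]-I[IX,Y]-I[X,IY]+I^2[X,Y]$ of the torsion; neither skew-symmetry nor the supergeometric formalism is needed.

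For (a), I would first expand the right-hand side. Writing $[X,Y]_{I^{2n},I}$ as the $I$-deformation of $[.,.]_{I^{2n}}$ and expanding $[.,.]_{I^{2n}}$ in turn produces nine terms; subtracting $[X,Y]_{I^{2n+1}}=[I^{2n+1}X,Y]+[X,I^{2n+1}Y]-I^{2n+1}[X,Y]$ cancels the three pure terms and leaves
\begin{align*}
[X,Y]_{I^{2n},I}-[X,Y]_{I^{2n+1}}={}&[I^{2n}X,IY]+[IX,I^{2n}Y]-I^{2n}[IX,Y]-I^{2n}[X,IY]\\
&\quad{}-I[I^{2n}X,Y]-I[X,I^{2n}Y]+2I^{2n+1}[X,Y].
\end{align*}
It then remains to identify this with the displayed torsion sum. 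Substituting the definition of ${\mathcal T}_{\Theta}I$ into $\sum_{i+j=2n-1}I^j{\mathcal T}_{\Theta}I(I^iX,Y)$ produces four families of terms that organise into two telescoping chains, the brackets of the form $[\,\cdot\,,I\,\cdot\,]$ against one another and the brackets $[\,\cdot\,,\cdot\,]$ against one another, each collapsing to its two boundary terms and yielding $[I^{2n}X,IY]-I^{2n}[X,IY]-I[I^{2n}X,Y]+I^{2n+1}[X,Y]$; the symmetric computation for $\sum_{i+j=2n-1}I^j{\mathcal T}_{\Theta}I(X,I^iY)$ gives the mirror expression, and the two add up exactly to the displayed difference.

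The decisive observation is that this telescoping never uses the parity of the exponent: carrying out the identical computation with $2n$ replaced by an arbitrary $n\ge 1$ yields the general identity
\begin{equation*}
[X,Y]_{I^{n+1}}=[X,Y]_{I^n,I}-\sum_{i+j=n-1}I^j\bigl({\mathcal T}_{\Theta}I(I^iX,Y)+{\mathcal T}_{\Theta}I(X,I^iY)\bigr),
\end{equation*}
of which (a) is the even-exponent case. For (b), assume $I$ Nijenhuis; every torsion term vanishes and this reduces to $[X,Y]_{I^{n+1}}=[X,Y]_{I^n,I}$ for all $n$. I would then induct on $n$: the cases $n=0,1$ are immediate (deforming by $I^0=id_E$ changes nothing), and in the inductive step $[.,.]_{I^{n+1}}=([.,.]_{I^n})_I$, which by the induction hypothesis equals the $(n+1)$-fold $I$-deformation $[.,.]_{\underbrace{I,\dots,I}_{n+1}}$.

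Finally, (c) follows by applying (b) twice. By (b), $[.,.]_{I^m}$ is the Dorfman bracket of the $m$-fold deformation $\Theta_m$, and $I$ is Nijenhuis for $\Theta_m$ by Corollary~\ref{Nijenhuis}; hence (b), applied now to the pre-Courant structure $\Theta_m$, gives $([.,.]_{I^m})_{I^n}=([.,.]_{I^m})_{\underbrace{I,\dots,I}_{n}}$. The right-hand side deforms $\Theta$ a total of $m+n$ times by $I$, so it equals $[.,.]_{\underbrace{I,\dots,I}_{m+n}}$, which is $[.,.]_{I^{m+n}}$ by a last use of (b); thus $[X,Y]_{I^m,I^n}=[X,Y]_{I^{m+n}}$. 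I expect the only real obstacle to be the bookkeeping in the telescoping step of (a), namely keeping the two index ranges and the two pairs of boundary terms straight, together with the easily missed point that this computation is parity-free and so also supplies the even-exponent identity that the induction in (b) requires but that (a), as stated for odd exponents only, does not by itself provide.
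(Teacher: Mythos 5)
Your proposal is correct, but it takes a genuinely different route from the paper for parts b) and c). The paper dismisses a) as ``an easy but cumbersome computation'' and then proves b) by a parity-splitting recursion: for odd exponents it invokes a) directly, while for even exponents $n=2k$ it uses the fact that $I^k$ is again Nijenhuis (Corollary \ref{hierarchy_Nijenhuis}) together with the torsion identity (\ref{second_def_torsion}) to write $[X,Y]_{I^{2k}}=[X,Y]_{I^k,I^k}$, and then iterates, shuffling deformations past each other via the commutation relation (\ref{commute}); part c) is likewise handled by b) plus (\ref{commute}). You instead observe that the telescoping computation behind a) is parity-free, so that $[X,Y]_{I^{n+1}}=[X,Y]_{I^n,I}-\sum_{i+j=n-1}I^j({\mathcal T}_\Theta I(I^iX,Y)+{\mathcal T}_\Theta I(X,I^iY))$ holds for every $n$; for Nijenhuis $I$ this collapses b) to a one-line induction with no case analysis, no appeal to Corollary \ref{hierarchy_Nijenhuis}, and no commutation lemma, and it also supplies the proof of a) that the paper omits (I checked your telescoping: both chains collapse to the stated boundary terms). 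For c) you replace the paper's commutation trick by Corollary \ref{Nijenhuis} (torsion stability under deformation) applied to $\Theta_m$; this works, but note that Corollary \ref{Nijenhuis} is derived from Proposition \ref{torsions}, which is stated for skew-symmetric tensors on a pre-Courant algebroid, whereas Proposition \ref{lemA} allows an arbitrary $(1,1)$-tensor and the deformed structures need then not be pre-Courant. This is harmless, since the proof of Proposition \ref{torsions} is a pure bracket computation using neither skew-symmetry nor the pre-Courant axioms---and the paper itself commits the same abuse when it applies its commutation lemma, stated for skew-symmetric tensors, to the powers $I^m$, $I^n$---but it deserves an explicit remark. In summary: your argument is more self-contained and streamlined; the paper's reuses previously established machinery at the cost of a fiddlier recursion.
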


\begin{proof}

\begin{itemize}
\item[a)] It is an easy but cumbersome computation.

\item[b)] First, we observe that, if two skew-symmetric $(1,1)$-tensors $I$ and $J$ commute, then $[X,Y]_{I,J}=[X,Y]_{J,I}$, for all  sections $X$ and $Y$ of $E$. In particular we have,
for all $m,n \in \Nn$,
\begin{equation}  \label{commute}
 [X,Y]_{I^m, I^n}=[X,Y]_{I^n, I^m}.
\end{equation}
\begin{itemize}
\item[i)] If $n$ is odd, $n=2k+1$, we use a):
$$[X,Y]_{I^n}=[X,Y]_{I^{2k+1}}=[X,Y]_{I^{2k},I },$$
and it is case ii).
\item[ii)] If  $n$ is even, $n=2k$, since $I^k$ is Nijenhuis, using (\ref{second_def_torsion}) we may write
    $$[X,Y]_{I^n}=[X,Y]_{I^k \smc I^k}=[X,Y]_{I^k, I^k}.$$
If $k$ is even, we repeat the procedure. If $k$ is odd, we are back to case i).

Repeating the procedure, and taking into account (\ref{commute}), we end up with
$$[X,Y]_{I^n}=[X,Y]_{{\scriptsize \underbrace{I, \dots, I}_{n}}},\,  \forall n \in \Nn.$$
 \end{itemize}
\item[c)] We use b) and (\ref{commute}):
$$[X,Y]_{I^n, I^m}= [X,Y]_{{\scriptsize \underbrace{I, \dots, I}_{n}}, I^m}=[X,Y]_{I^m,{\scriptsize \underbrace{I, \dots, I}_{n}}}=[X,Y]_{{\scriptsize \underbrace{I, \dots, I}_{m+n}}}=[X,Y]_{I^{m+n}}.$$
\end{itemize}

\end{proof}

\

Given a Courant structure $(\rho, [.,.])$ on $(E, \langle\, ,\, \rangle)$, we denote by $(\rho, [.,.])_I$ the pre-Courant structure on $(E, \langle\, ,\, \rangle)$ defined by $$(\rho, [.,.])_I:=( \rho \smc I, [.,.]_I),$$ where $I$ is a $(1,1)$-tensor on $E$. If $I$ is Nijenhuis for $(\rho, [.,.])$, then
\begin{equation}  \label{deformed_structure}
(\rho, [.,.])_{I^{k_1}, \cdots, I^{k_n}}=(\rho, [.,.])_{{\scriptsize \underbrace{I, \cdots,I}_{k_1+ \cdots+ k_n}}}=(\rho, [.,.])_{I^{k_1+ \cdots+ k_n}}
\end{equation}
is a Courant structure on $(E, \langle\, ,\, \rangle)$, for all $k_1, \cdots, k_n \in \Nn$, $n \in \Nn$. This result follows directly from Proposition \ref{lemA} b) and c). In supergeometric terms, (\ref{deformed_structure}) means that the deformation of $\Theta$, either by $I^{k_1+ \cdots+ k_n}$ or successively by $I^{k_1}, I^{k_2}, \cdots, I^{k_n}$, is the pre-Courant structure $\Theta_{{\scriptsize \underbrace{I, \cdots,I}_{k_1+ \cdots+ k_n}}}=\Theta_{k_1+ \cdots+ k_n}$.

\subsection{Hierarchy of compatible tensors w.r.t. $\Theta$}  \label{section2.2}
In this section, we introduce the notion of compatible pair of $(1,1)$-tensors with respect to a pre-Courant structure $\Theta$ on $E$ and construct a hierarchy of pairs of tensors satisfying this type of compatibility.

The notion of concomitant of two $(1,1)$-tensors on a manifold was introduced in \cite{magrimorosi} and then extended to Lie algebroids in \cite{magriYKS}. For pre-Courant algebroids it can be defined as follows:

\begin{defn}
The concomitant of two skew-symmetric $(1,1)$-tensors $I$ and $J$ on a pre-Courant algebroid $(E,\Theta)$ is given by
\begin{equation}  \label{def_conc}
C_\Theta(I,J)=\{J, \{I, \Theta \}\}+\{I, \{J, \Theta \}\}=\Theta _{I,J}+\Theta_{J,I}.
\end{equation}
\end{defn}
\noindent Using the Jacobi identity, we easily check that (\ref{def_conc}) is equivalent to
\begin{equation*}
C_\Theta(I,J)=\Theta_{\{J,I\}}+2 \Theta_{J,I}.
\end{equation*}
If $(\rho, [.,.])$ is the pre-Courant structure on $E$ corresponding to $\Theta$, (\ref{def_conc}) reads as follows:
\begin{equation} \label{def_conc1}
 \{\{X, C_\Theta(I,J) \},Y \}= [X,Y]_{I,J}+ [X,Y]_{J,I}
 \end{equation}
 and
 \begin{equation*}
 \{\{X, C_\Theta(I,J) \},f \}= (\rho \smc (I \smc J+J\smc I))(X).f,
  \end{equation*}
for all $X,Y \in \Gamma(E)$ and $f \in C^\infty(M)$.

In the sequel, we denote the left hand side of (\ref{def_conc1}) by $C_\Theta(I,J)(X,Y)$. When $I$ and $J$ anti-commute, we have
$$ \{\{X, C_\Theta(I,J) \},f \}=0,$$
for all  $X \in \Gamma(E)$ and $f \in C^\infty(M)$. Therefore, in this case,
\begin{equation}  \label{conc_0}
C_\Theta(I,J)=0 \Leftrightarrow C_\Theta(I,J)(X,Y)=0, \, \, \forall X,Y \in \Gamma(E).
\end{equation}

\begin{defn}
We say that two skew-symmetric $(1,1)$-tensors $I$ and $J$ on a pre-Courant algebroid $(E,\Theta)$
 {\em anti-commute with respect to} $\Theta$, if $\Theta_{I,J}= -\Theta_{J,I}$ or, equivalently, if $C_\Theta(I,J)=0$.
\end{defn}

For the various classes of pairs of skew-symmetric $(1,1)$-tensors that will be introduced in the sequel, we shall require that the skew-symmetric $(1,1)$-tensors are compatible in the following sense:

\begin{defn}  \label{def_compatible_pair}
A pair $(I,J)$ of
skew-symmetric $(1,1)$-tensors on a pre-Courant algebroid $(E,\Theta)$ is said to be a {\em compatible pair w.r.t.} $\Theta$, if $I$ and $J$ anti-commute and anti-commute w.r.t. $\Theta$.
\end{defn}

Let $I$ and $J$ be two $(1,1)$-tensors on a pre-Courant algebroid $(E, \Theta)$. Recall that the {\em Nijenhuis concomitant} of $I$ and $J$ is defined, for all sections $X$ and $Y$ of $E$, as follows \cite{kobay}:
\begin{eqnarray} \label{torsion_sum}
{\mathcal{N}}_\Theta(I,J)(X,Y)&=& [IX,JY]-I[X,JY]-J[IX,Y]+IJ[X,Y] \nonumber \\
& &+[JX,IY]-J[X,IY]-I[JX,Y]+JI[X,Y].
\end{eqnarray}
Notice that, if $I=J$, then ${\mathcal N}_\Theta(I,I)=2 {\mathcal T}_\Theta I$ and, if $I$ and $J$ anti-commute, then ${\mathcal N}_\Theta(I,J)=\frac{1}{2} C_\Theta(I,J)$.

\begin{lem}  \label{formula torsion_sum}
Let $I$ and $J$ be two skew-symmetric $(1,1)$-tensors on a pre-Courant algebroid $(E,\Theta)$. Then, ${\mathcal T}_\Theta(I+J)= {\mathcal T}_\Theta I+{\mathcal T}_\Theta J + {\mathcal{N}}_\Theta(I,J)$.
\end{lem}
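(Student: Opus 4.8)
The plan is to expand everything directly from the definition of torsion, using only the $\mathbb{R}$-bilinearity of the Dorfman bracket; no Jacobi identity and no supergeometric machinery is required, so in fact the identity holds at the level of pre-Leibniz algebroids.

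First I would write out the torsion of the sum in its expanded form. For any $(1,1)$-tensor $K$ one has
\[
{\mathcal T}_\Theta K(X,Y) = [KX,KY] - K[KX,Y] - K[X,KY] + K^2[X,Y],
\]
obtained by unfolding $[X,Y]_K$ inside ${\mathcal T}_\Theta K(X,Y)= [KX,KY]-K([X,Y]_K)$. Taking $K=I+J$, substituting $KX=IX+JX$ and $KY=IY+JY$, and expanding each of the four terms by bilinearity, the first bracket splits into $[IX,IY]+[IX,JY]+[JX,IY]+[JX,JY]$, the second and third split analogously, and the fourth uses $(I+J)^2=I^2+IJ+JI+J^2$.

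Next I would sort the resulting sixteen terms into three groups. The terms built solely from $I$, namely $[IX,IY]-I[IX,Y]-I[X,IY]+I^2[X,Y]$, reassemble into ${\mathcal T}_\Theta I(X,Y)$, and symmetrically the purely-$J$ terms give ${\mathcal T}_\Theta J(X,Y)$. The eight remaining mixed terms are
\[
[IX,JY]+[JX,IY]-I[JX,Y]-J[IX,Y]-I[X,JY]-J[X,IY]+IJ[X,Y]+JI[X,Y],
\]
and comparing this with the definition~(\ref{torsion_sum}) of the Nijenhuis concomitant shows, term by term, that it is exactly ${\mathcal N}_\Theta(I,J)(X,Y)$. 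Adding the three groups yields the claimed equality.

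There is no genuine obstacle here beyond bookkeeping: the only thing to watch is keeping track of which endomorphism is applied to which bracket after the bilinear expansion, and of the signs carried by the two ``$-K[\cdot,\cdot]$'' terms. Since all three $(2,1)$-tensors are evaluated on the same pair $(X,Y)$ and the identity is verified pointwise, this establishes ${\mathcal T}_\Theta(I+J)={\mathcal T}_\Theta I+{\mathcal T}_\Theta J+{\mathcal N}_\Theta(I,J)$.
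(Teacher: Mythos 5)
Your proof is correct and follows essentially the same route as the paper: expand ${\mathcal T}_\Theta(I+J)(X,Y)$ by bilinearity of the Dorfman bracket, collect the purely-$I$ and purely-$J$ terms into ${\mathcal T}_\Theta I(X,Y)$ and ${\mathcal T}_\Theta J(X,Y)$, and identify the eight mixed terms with ${\mathcal N}_\Theta(I,J)(X,Y)$ as defined in~(\ref{torsion_sum}). Your observation that the argument needs only bilinearity, and hence holds for pre-Leibniz algebroids, is consistent with the paper's remarks in the introduction.
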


\begin{proof}
Let $I$ and $J$ be two skew-symmetric $(1,1)$-tensors on  $(E,\Theta)$ and $X, Y$ any sections of $E$. Then, using the definition of Nijenhuis torsion, we get:
\begin{align*}
{\mathcal T}_\Theta(I+J)(X,Y)=& {\mathcal T}_\Theta I(X,Y)+ {\mathcal T}_\Theta J(X,Y) + [IX,JY]+[JX,IY]-I[X,JY] \\
& -J[X,IY]-I[JX,Y]-J[IX,Y]+ IJ[X,Y]+JI[X,Y]\\
=&{\mathcal T}_\Theta I(X,Y)+ {\mathcal T}_\Theta J(X,Y)+{\mathcal{N}}_\Theta(I,J)(X,Y).
\end{align*}
\end{proof}

The next proposition gives a characterization of compatible pairs.

\begin{prop}
Let $I$ and $J$ be two anti-commuting skew-symmetric $(1,1)$-tensors on a pre-Courant algebroid $(E,\Theta)$. Then, $(I,J)$ is a compatible pair w.r.t. $\Theta$ if and only if ${\mathcal T}_\Theta(I+J)= {\mathcal T}_\Theta I+{\mathcal T}_\Theta J$.
\end{prop}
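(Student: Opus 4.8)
The plan is to reduce both sides of the claimed equivalence to the single vanishing condition $C_\Theta(I,J)=0$. Since $I$ and $J$ are assumed to anti-commute, by Definition~\ref{def_compatible_pair} the pair $(I,J)$ is compatible w.r.t. $\Theta$ precisely when $I$ and $J$ also anti-commute w.r.t. $\Theta$, that is, exactly when $C_\Theta(I,J)=0$. Thus it suffices to prove that the torsion identity ${\mathcal T}_\Theta(I+J)={\mathcal T}_\Theta I+{\mathcal T}_\Theta J$ is equivalent to $C_\Theta(I,J)=0$.

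First I would invoke Lemma~\ref{formula torsion_sum}, which gives ${\mathcal T}_\Theta(I+J)={\mathcal T}_\Theta I+{\mathcal T}_\Theta J+{\mathcal N}_\Theta(I,J)$. Rearranging, the torsion identity holds if and only if the Nijenhuis concomitant ${\mathcal N}_\Theta(I,J)$ vanishes as a $(2,1)$-tensor, i.e. ${\mathcal N}_\Theta(I,J)(X,Y)=0$ for all sections $X,Y$ of $E$.

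It then remains to compare ${\mathcal N}_\Theta(I,J)$ with $C_\Theta(I,J)$. Because $I$ and $J$ anti-commute, it has already been observed that ${\mathcal N}_\Theta(I,J)=\frac{1}{2} C_\Theta(I,J)$, so the vanishing ${\mathcal N}_\Theta(I,J)(X,Y)=0$ for all $X,Y$ is the same as $C_\Theta(I,J)(X,Y)=0$ for all $X,Y$. Finally, again using anti-commutation, the equivalence (\ref{conc_0}) upgrades this pointwise vanishing of the section-valued operation to the vanishing of the degree-three function $C_\Theta(I,J)=0$, which closes the chain of equivalences. The only point requiring care --- and the reason the anti-commutation hypothesis enters twice --- is the passage between the section-valued Nijenhuis concomitant and the degree-three concomitant $C_\Theta(I,J)$: it is precisely anti-commutation that kills the $C^\infty(M)$-valued part of $C_\Theta(I,J)$ and makes (\ref{conc_0}) applicable, so that no separate computation is needed and the result follows purely by assembling the identities established above.
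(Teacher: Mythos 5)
Your proof is correct and follows exactly the route the paper intends: the proposition is stated right after Lemma~\ref{formula torsion_sum} precisely because it is the combination of that lemma, the observation that ${\mathcal N}_\Theta(I,J)=\tfrac{1}{2}C_\Theta(I,J)$ for anti-commuting tensors, and the equivalence (\ref{conc_0}), with compatibility reducing to $C_\Theta(I,J)=0$ by Definition~\ref{def_compatible_pair}. Your explicit attention to the passage between the section-valued concomitant and the degree-three function is exactly the point the paper handles via (\ref{conc_0}), so there is nothing to add.
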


\begin{prop} \label{extensionmagri}
Let  $I$ and $J$ be two anti-commuting skew-symmetric $(1,1)$-tensors on a pre-Courant algebroid $(E,\Theta)$. Then, for all sections $X, Y$ of $E$ and $n \geq 1$,
 \begin{align} \label{concomitant_recursion}
C_\Theta(I,I^{n} \smc J)(X,Y)= & I( C_\Theta(I,I^{n-1} \smc J)(X,Y))   \nonumber  \\
& + 2\,{\mathcal T}_{\Theta}I ((I^{n-1} \smc J)X,Y)+ 2\,{\mathcal T}_{\Theta}I(X, (I^{n-1} \smc J)Y).
 \end{align}
\end{prop}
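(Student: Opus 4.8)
The plan is to expand both sides of \eqref{concomitant_recursion} using the pre-Leibniz bracket and the definition of the concomitant, and match them term by term. Recall from \eqref{def_conc1} that $C_\Theta(A,B)(X,Y)=[X,Y]_{A,B}+[X,Y]_{B,A}$ for skew-symmetric tensors $A,B$, where $[X,Y]_{A,B}=[AX,Y]_B+[X,AY]_B-A[X,Y]_B$ and $[X,Y]_B=[BX,Y]+[X,BY]-B[X,Y]$. First I would abbreviate $K:=I^{n-1}\smc J$ so that $I^n\smc J=I\smc K$, and write out $C_\Theta(I,I\smc K)(X,Y)=[X,Y]_{I,I\smc K}+[X,Y]_{I\smc K,I}$ completely in terms of the undeformed bracket $[.,.]$, the tensors $I$ and $K$, and their compositions. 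The goal is an identity purely among iterated brackets, so no supergeometry is needed and the argument will hold at the pre-Leibniz level.

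The key algebraic input is the torsion identity: by the first definition of torsion, ${\mathcal T}_\Theta I(U,V)=[IU,IV]-I[IU,V]-I[U,IV]+I^2[U,V]$, which lets me replace any occurrence of the "double $I$'' pattern $[I\,\cdot\,,I\,\cdot\,]$ by lower-order terms plus a torsion term. Next I would expand the right-hand side of \eqref{concomitant_recursion}: the term $I\bigl(C_\Theta(I,K)(X,Y)\bigr)$ produces $I$ applied to the fully expanded $[X,Y]_{I,K}+[X,Y]_{K,I}$, while the two torsion terms $2{\mathcal T}_\Theta I(KX,Y)+2{\mathcal T}_\Theta I(X,KY)$ expand via the torsion formula with $U=KX,\,V=Y$ and $U=X,\,V=KY$. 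The plan is then to check that subtracting $I\bigl(C_\Theta(I,K)(X,Y)\bigr)$ from the left-hand side leaves exactly these two torsion contributions. I expect the anti-commutativity $IK=-KI$ (which follows since $I$ and $J$ anti-commute, so $I\smc(I^{n-1}\smc J)$ and $I$ anti-commute as well) to be needed to collapse the cross terms, so I would verify that relation explicitly at the outset.

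The main obstacle is purely bookkeeping: the left-hand side $C_\Theta(I,I\smc K)$ unfolds into roughly a dozen bracket monomials involving $I^2$, $I\smc K$, $K\smc I$ and nested $I$'s, and the matching requires carefully distinguishing the terms where $I$ sits outermost (which recombine into $I\bigl(C_\Theta(I,K)\bigr)$) from those exhibiting the double-$I$ pattern (which become torsion). I would organize the computation by the position of the "free'' $K$ (acting on $X$ versus on $Y$) and by whether a leading $I$ can be factored out, and use the torsion formula only on the residual double-$I$ brackets. Since the statement is about the image of $K=I^{n-1}\smc J$, no Nijenhuis hypothesis on $I$ is used—the identity is an unconditional recursion, which is why it can later be iterated to control $C_\Theta(I,I^n\smc J)$ by induction on $n$ once ${\mathcal T}_\Theta I$ is assumed to vanish.
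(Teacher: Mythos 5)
Your proposal is correct and follows essentially the same route as the paper: the paper likewise expands $C_\Theta(I,I^n\smc J)(X,Y)$ and $I\bigl(C_\Theta(I,I^{n-1}\smc J)(X,Y)\bigr)$ directly in terms of the undeformed Dorfman bracket, uses the anti-commutation of $I$ with $I^{n}\smc J$ to collapse the cross terms, and identifies the leftover double-$I$ brackets with ${\mathcal T}_{\Theta}I(I^{n-1}(JX),Y)$ and ${\mathcal T}_{\Theta}I(X,I^{n-1}(JY))$ via the definition of the torsion. Your observations that the identity is unconditional (no Nijenhuis hypothesis) and purely pre-Leibniz are also consistent with the paper's treatment.
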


\begin{proof}

A simple computation gives
\begin{align*}
C_\Theta(I,I^{n} \smc J)(X,Y)= &[X,Y]_{I,I^n \smc J}+[X,Y]_{I^n \smc J,I} \\
&= 2\left([I^n(JX), IX]- I[I^n(JX), Y]+[IX, I^n(JY)] \right.\\
&  \left. -I[X, I^n(JY)]-I^n \smc J[IX,Y]-I^n \smc J[X,IY] \right),
\end{align*}
for all sections $X,Y$ of $E$ and $n \geq 1$.
Thus, we have
\begin{align*}
I( C_\Theta(I,I^{n-1} \smc J)(X,Y))= 2\left(I[I^{n-1}(JX), IY]-I^2[I^{n-1}(JX),Y] \right. \\
 \left. +I[IX, I^{n-1}(JY)]-I^2[X, I^{n-1}(JY)]-I^n \smc J [IX,Y]-I^n \smc J[X,IY] \right).
\end{align*}
Since
$${\mathcal T}_\Theta I (I^{n-1} (JX),Y)=[I^n (JX),IY]-I([I^n  (JX),Y]+[I^{n-1}(JX),IY]- I[I^{n-1} (JX),Y])$$
and
$${\mathcal T}_\Theta I (X,I^{n-1} (JY))=[IX,I^n (JY)]-I([IX,I^{n-1}  (JY)]+[X,I^{n}(JY)]- I[X,I^{n-1} (JY)]),$$
the result follows.
\end{proof}

\begin{thm}  \label{propconcIn}
Let  $I$ and $J$ be two skew-symmetric $(1,1)$-tensors on a pre-Courant algebroid $(E,\Theta)$  such that ${\mathcal T}_{\Theta}I (JX,Y)={\mathcal T}_{\Theta}I(X,JY)=0$, for all sections $X$ and $Y$ of $E$. If $(I,J)$ is a compatible pair w.r.t. $\Theta$, then
 \begin{equation}  \label{concIn}
 C_\Theta(I, I^n \smc J)=0,
 \end{equation}
 and $(I, I^n \smc J)$ is a compatible pair w.r.t. $\Theta$, for all $n \in \Nn$.
 \end{thm}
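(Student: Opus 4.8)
The plan is to prove both assertions simultaneously by induction on $n$, driven entirely by the recursion (\ref{concomitant_recursion}) of Proposition \ref{extensionmagri}. The base case $n=0$ is nothing but the hypothesis that $(I,J)$ is a compatible pair, since $I^0\smc J=J$ gives $C_\Theta(I,I^0\smc J)=C_\Theta(I,J)=0$.

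Before the inductive step I would record the elementary operator identities coming from the anti-commutation $I\smc J=-J\smc I$ built into the notion of compatible pair: namely $I^k\smc J=(-1)^k\,J\smc I^k$ for every $k$, whence (i) $I$ anti-commutes with each $I^k\smc J$, because $I\smc(I^k\smc J)+(I^k\smc J)\smc I=I^{k+1}\smc J+I^k\smc(J\smc I)=I^{k+1}\smc J-I^{k+1}\smc J=0$, and (ii) each $I^k\smc J$ is skew-symmetric, since $(I^k\smc J)^*=J^*\smc(I^*)^k=(-1)^{k+1}J\smc I^k=-\,I^k\smc J$. Fact (ii) ensures that $(I,I^k\smc J)$ is a genuine pair of skew-symmetric tensors, and fact (i) is what licenses the use of (\ref{conc_0}).

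For the inductive step assume $C_\Theta(I,I^{n-1}\smc J)=0$. Since $I$ and $I^{n-1}\smc J$ anti-commute, (\ref{conc_0}) upgrades this to the pointwise vanishing $C_\Theta(I,I^{n-1}\smc J)(X,Y)=0$ for all $X,Y$. Plugging into (\ref{concomitant_recursion}), the first term $I\big(C_\Theta(I,I^{n-1}\smc J)(X,Y)\big)$ vanishes. For the two torsion terms I would use $I^{n-1}\smc J=(-1)^{n-1}J\smc I^{n-1}$ to write
$$\mathcal{T}_\Theta I\big((I^{n-1}\smc J)X,Y\big)=(-1)^{n-1}\,\mathcal{T}_\Theta I\big(J(I^{n-1}X),Y\big)=0$$
by the standing hypothesis $\mathcal{T}_\Theta I(JZ,Y)=0$ (applied with $Z=I^{n-1}X$), and symmetrically $\mathcal{T}_\Theta I\big(X,(I^{n-1}\smc J)Y\big)=(-1)^{n-1}\mathcal{T}_\Theta I\big(X,J(I^{n-1}Y)\big)=0$ by the second hypothesis. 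Hence $C_\Theta(I,I^n\smc J)(X,Y)=0$ for all $X,Y$; since $I$ anti-commutes with $I^n\smc J$, a final appeal to (\ref{conc_0}) delivers the global identity (\ref{concIn}), $C_\Theta(I,I^n\smc J)=0$. Combined with the anti-commutation and skew-symmetry from facts (i)--(ii), this is precisely the statement that $(I,I^n\smc J)$ is a compatible pair w.r.t. $\Theta$ in the sense of Definition \ref{def_compatible_pair}.

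The only point requiring care is the passage between the global vanishing of a concomitant and its pointwise vanishing: the equivalence (\ref{conc_0}) is available only because $I$ anti-commutes with $I^k\smc J$, so I would verify this anti-commutation at each use rather than take it for granted. Once that is in place the proof is a short induction, the intertwining identity $I^kJ=(-1)^kJI^k$ being exactly what lets the hypotheses $\mathcal{T}_\Theta I(J\,\cdot\,,\cdot)=\mathcal{T}_\Theta I(\cdot,J\,\cdot)=0$ absorb the extra powers of $I$ generated by the recursion, so that no new torsion condition is ever needed along the way.
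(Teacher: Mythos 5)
Your proof is correct and follows essentially the same route as the paper's: induction on $n$ via the recursion (\ref{concomitant_recursion}), using the intertwining identity $I^{n-1}\smc J=(-1)^{n-1}J\smc I^{n-1}$ to make the torsion terms vanish under the standing hypothesis, and invoking (\ref{conc_0}) together with the anti-commutation of $I$ with $I^n\smc J$ to pass between pointwise and global vanishing. The only difference is that you spell out the skew-symmetry of $I^k\smc J$ and the anti-commutation checks explicitly, which the paper leaves implicit.
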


\begin{proof}
For $n=0$, (\ref{concIn}) reduces to $C_\Theta(I,J)=0$ and $(I,J)$ is a compatible pair w.r.t. $\Theta$, which is one of the assumptions. From
 (\ref{concomitant_recursion}), we get
$$ C_\Theta(I,I^{n} \smc J)(X,Y)=  I( C_\Theta(I,I^{n-1} \smc J)(X,Y)), \,\, n\geq 1,$$
for all sections $X,Y$ of $E$, where we used $I^{n-1} \smc J = (-1)^{n-1} J \smc  I^{n-1}$ to obtain $${\mathcal T}_{\Theta}I ((I^{n-1} (JX),Y)=(-1)^{n-1} {\mathcal T}_{\Theta}I(J(I^{n-1} X),Y)=0$$ and analogously $${\mathcal T}_{\Theta}I(X, I^{n-1}(JY))=0.$$
Therefore, using (\ref{conc_0}),
 it is obvious that if $C_\Theta(I, I^{n-1} \smc J)=0$, then $C_\Theta(I,I^{n} \smc J)=0$ and (\ref{concIn}) follows by recursion.
 Since $I$ anti-commutes with $I^{n} \smc J$, the proof is complete.
\end{proof}


\subsection{Compatible tensors w.r.t. $\Theta_k$, $k \in \Nn$}  \label{section2.3}
In this section, we address the general case of hierarchies of tensors which are compatible w.r.t. each term of the family  $(\Theta_k)_{k \in \Nn}$ of pre-Courant structures on $E$.

\begin{prop}\label{propconcThetaI}
Let  $I$ and $J$ be two skew-symmetric $(1,1)$-tensors on a pre-Courant algebroid $(E,\Theta)$. Then,
\begin{equation*}\label{C_theta1}
C_{\Theta_{I}}(I,J)= C_{\Theta}(I, \{J,I \})+ \{I, C_{\Theta}(I,J)\}.
\end{equation*}
In particular, if $I$ and $J$ anti-commute, then,
\begin{equation}\label{C_theta2}
C_{\Theta_{I}}(I,J)= 2 C_{\Theta}(I, I \smc J)+ \{I, C_{\Theta}(I,J)\}.
\end{equation}
\end{prop}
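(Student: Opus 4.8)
The plan is to prove both identities purely in the big-bracket formalism, reducing them to the graded Jacobi identity of $\{\cdot,\cdot\}$. The one non-trivial ingredient I will reuse is the reformulation of the concomitant obtained just above, $C_{\Sigma}(I,J)=\Sigma_{\{J,I\}}+2\,\Sigma_{J,I}$, which holds for \emph{every} degree-$3$ function $\Sigma$ (its derivation uses only the Jacobi identity of the big bracket, not the condition $\{\Sigma,\Sigma\}=0$). I will apply it once with $\Sigma=\Theta_I=\{I,\Theta\}$ and read off the right-hand side from the plain definition of the concomitant.

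First I would compute the left-hand side. Applying the reformulation to $\Theta_I$ gives $C_{\Theta_I}(I,J)=(\Theta_I)_{\{J,I\}}+2\,(\Theta_I)_{J,I}$. Since $\{J,I\}$ is again a skew-symmetric $(1,1)$-tensor whenever $I$ and $J$ are, unwinding the deformations via $(\Theta_I)_K=\{K,\{I,\Theta\}\}=\Theta_{I,K}$ and $(\Theta_I)_{J,I}=\Theta_{I,J,I}$ yields
$$C_{\Theta_I}(I,J)=\Theta_{I,\{J,I\}}+2\,\Theta_{I,J,I}.$$

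Next I would expand the right-hand side directly from the definition: $C_\Theta(I,\{J,I\})=\Theta_{I,\{J,I\}}+\Theta_{\{J,I\},I}$, while $\{I,C_\Theta(I,J)\}=\{I,\Theta_{I,J}+\Theta_{J,I}\}=\Theta_{I,J,I}+\Theta_{J,I,I}$. Comparing with the previous display, the first identity is therefore equivalent to $\Theta_{I,J,I}=\Theta_{\{J,I\},I}+\Theta_{J,I,I}$; cancelling the outer $\{I,\cdot\}$ this is precisely the Jacobi identity $\{J,\{I,\Theta\}\}=\{\{J,I\},\Theta\}+\{I,\{J,\Theta\}\}$, in which every sign is trivial because $I$ and $J$ are even (of degree $2$). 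This settles the general formula.

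Finally, for (\ref{C_theta2}) I would invoke the operator dictionary for the big bracket: on skew-symmetric $(1,1)$-tensors $\{J,I\}$ realises their commutator $I\smc J-J\smc I$, so anti-commutation of $I$ and $J$ gives $\{J,I\}=2\,I\smc J$, which is itself skew-symmetric since $(I\smc J)^*=J^*I^*=JI=-I\smc J$. Bilinearity of $C_\Theta(I,\cdot)$ then converts $C_\Theta(I,\{J,I\})$ into $2\,C_\Theta(I,I\smc J)$, and (\ref{C_theta2}) follows from the general formula. The one place demanding genuine care is this dictionary: graded antisymmetry forces $\{I,I\}=0$, the composition $I\smc J$ is skew-symmetric only under the anti-commutation hypothesis, and it is exactly the factor $2$ in $\{J,I\}=2\,I\smc J$ that produces the coefficient in (\ref{C_theta2}); the remainder of the argument is a mechanical application of Jacobi and bilinearity.
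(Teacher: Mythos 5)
Your proof is correct and is essentially the paper's own argument in light disguise: invoking the reformulation $C_{\Sigma}(I,J)=\Sigma_{\{J,I\}}+2\,\Sigma_{J,I}$ for $\Sigma=\Theta_I$ is exactly the paper's first application of the graded Jacobi identity ($\Theta_{I,I,J}=\Theta_{I,\{J,I\}}+\Theta_{I,J,I}$), and your remaining step $\Theta_{I,J,I}=\Theta_{\{J,I\},I}+\Theta_{J,I,I}$ is its second, so the two proofs coincide term by term. The treatment of the particular case, via $\{J,I\}=2\,I\smc J$ and bilinearity of $C_\Theta(I,\cdot)$, is also identical to the paper's.
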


\begin{proof}
Applying twice the Jacobi identity, we get
\begin{align*}
    \Theta_{I,I,J}&=\Theta_{I,\{J,I\}} + \Theta_{I,J,I}\\
    &=\Theta_{I,\{J,I\}} + \Theta_{\{J,I\},I} + \Theta_{J,I,I},
\end{align*}
which can be written as $$C_{\Theta}(I, \{J,I\})=\Theta_{I,I,J}-\Theta_{J,I,I}.$$

From the definition of $C_{\Theta}(I,J)$, we have $\Theta_{J,I,I}=\{I, C_{\Theta}(I,J)\}- \Theta_{I,J,I}$. Substituting this result in the last equality, we get
\begin{align*}
    C_{\Theta}(I, \{J,I\})&=\Theta_{I,I,J}-\{I, C_{\Theta}(I,J)\} + \Theta_{I,J,I}\\
    &=C_{\Theta_I}(I,J) - \{I, C_{\Theta}(I,J)\},
\end{align*}
proving the first statement. If $I$ and $J$ anti-commute, then $\{J,I\}=2 I \smc J$ and the second statement follows.
\end{proof}

The next theorem extends the result of Theorem~\ref{propconcIn}.

\begin{thm}  \label{new thma}
Let  $I$ and $J$ be two skew-symmetric $(1,1)$-tensors on a pre-Courant algebroid $(E,\Theta)$  such that ${\mathcal T}_{\Theta}I (JX,Y)={\mathcal T}_{\Theta}I(X,JY)=0$, for all sections $X$ and $Y$ of $E$. If $(I,J)$ is a compatible pair w.r.t. $\Theta$, then
$C_{\Theta_k}(I,I^n\smc J)=0$ and
$(I,I^n\smc J)$ is a compatible pair w.r.t. $\Theta_k$, for all $k,n \in \Nn$.
\end{thm}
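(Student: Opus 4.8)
The plan is to reduce everything to two results already in hand: the case $k=0$ is exactly the conclusion of Theorem~\ref{propconcIn}, while the passage from $\Theta_{k-1}$ to $\Theta_k$ is governed by the recursion of Proposition~\ref{propconcThetaI}. Accordingly I would argue by induction on $k$, keeping $n$ universally quantified at every stage. Before starting the induction, I would record the purely algebraic fact that $I$ and $I^n \smc J$ anti-commute as $(1,1)$-tensors: since $I$ and $J$ anti-commute we have $J \smc I = -I \smc J$, whence $(I^n \smc J)\smc I = I^n \smc (J \smc I) = -I^{n+1}\smc J = -I \smc (I^n \smc J)$. This supplies one half of what ``compatible pair w.r.t.\ $\Theta_k$'' demands (and it holds for every $k$), and it is precisely the hypothesis needed to invoke Proposition~\ref{propconcThetaI} with second tensor $I^n \smc J$.

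For the base case $k=0$, the desired identity $C_\Theta(I, I^n \smc J)=0$ for all $n$ is the content of Theorem~\ref{propconcIn}, applicable because $(I,J)$ is compatible w.r.t.\ $\Theta$ and the torsion conditions ${\mathcal T}_\Theta I(JX,Y)={\mathcal T}_\Theta I(X,JY)=0$ are assumed. I would emphasize that these torsion hypotheses are consumed entirely here and are not needed again.

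For the inductive step, assume $C_{\Theta_{k-1}}(I, I^m \smc J)=0$ for all $m\in\Nn$. Since $\Theta_{k-1}$ is again a pre-Courant structure and $(\Theta_{k-1})_I = \Theta_k$, and since $I$ anti-commutes with $I^n \smc J$, formula~(\ref{C_theta2}) of Proposition~\ref{propconcThetaI} applied with base structure $\Theta_{k-1}$ and second tensor $I^n \smc J$, using $I \smc (I^n \smc J)=I^{n+1}\smc J$, yields
\begin{equation*}
C_{\Theta_k}(I, I^n \smc J) = 2\,C_{\Theta_{k-1}}(I, I^{n+1}\smc J) + \{I, C_{\Theta_{k-1}}(I, I^n \smc J)\}.
\end{equation*}
Both terms on the right vanish by the inductive hypothesis (at $m=n+1$ and at $m=n$), so $C_{\Theta_k}(I, I^n \smc J)=0$. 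Combined with the tensor anti-commutation established above, this shows $(I, I^n \smc J)$ is a compatible pair w.r.t.\ $\Theta_k$ for all $k,n\in\Nn$.

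The only genuine point of care, and the main obstacle to a careless argument, is that $n$ must be carried universally through the induction: the recursion at level $k$ feeds on $C_{\Theta_{k-1}}(I, I^m \smc J)$ at the two distinct indices $m=n$ and $m=n+1$, so a single-$n$ induction would fail to close. Everything else is bookkeeping, namely identifying $(\Theta_{k-1})_I$ with $\Theta_k$ and verifying that Proposition~\ref{propconcThetaI} legitimately applies, which reduces to the tensor anti-commutation checked at the outset.
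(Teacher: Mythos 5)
Your proposal is correct and follows essentially the same route as the paper: induction on $k$ with $n$ kept universally quantified, base case via Theorem~\ref{propconcIn}, inductive step via formula~(\ref{C_theta2}) of Proposition~\ref{propconcThetaI} applied to $\Theta_{k-1}$ with second tensor $I^n\smc J$, plus the algebraic anti-commutation of $I$ with $I^n\smc J$. The subtlety you flag — that the recursion consumes the inductive hypothesis at both indices $n$ and $n+1$ — is exactly the structure of the paper's own argument.
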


\begin{proof}
Let $I$ and $J$ be two skew-symmetric $(1,1)$-tensors which are compatible w.r.t $\Theta$.
Suppose that ${\mathcal T}_{\Theta}I(JX,Y)= {\mathcal T}_{\Theta}I(X, JY)=0$, for all sections $X$ and $Y$ of $E$. We will prove, by induction on $k$, that
        $$C_{\Theta_k}(I,I^n \smc  J)=0, \quad \forall k,n \in \mathbb{N}.$$
        For $k=0$, this is the content of Theorem~\ref{propconcIn}.

        Suppose now that, for some $k \in \mathbb{N}$, $C_{\Theta_k}(I,I^n \smc  J)=0$,  for all $n \in \mathbb{N}$. Then, from (\ref{C_theta2})  we have, for all $n \in \mathbb{N}$,
        \begin{equation*}
            C_{\Theta_{k+1}}(I,I^n \smc  J)=2 C_{\Theta_k}(I,I^{n+1} \smc J) + \{I, C_{\Theta_k}(I,I^n \smc J)\}=0,
        \end{equation*}
        where we used the induction hypothesis in the last equality. Since the skew-symmetric tensor $I^n \smc J$ anti-commutes with $I$, for all $n \in \Nn$, $(I,I^n\smc J)$ is a compatible pair w.r.t. $\Theta_k$, for all $k,n \in \Nn$.
         \end{proof}

In order to establish the main results of this section, we need the following lemmas.

\begin{lem} \label{IoJ}
Let  $I$ and $J$ be two anti-commuting skew-symmetric $(1,1)$-tensors on a pre-Courant algebroid $(E,\Theta)$. Then,
 \begin{equation*}
 C_\Theta(I,J)=2(\Theta _{I,J}- \Theta_{I \smc J}).
 \end{equation*}
 \end{lem}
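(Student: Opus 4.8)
The plan is to derive the identity purely from the two expressions for the concomitant that are already available, together with the supergeometric translation of the anti-commutation hypothesis. Recall that by definition $C_\Theta(I,J)=\Theta_{I,J}+\Theta_{J,I}$, and that, just after that definition, the Jacobi identity was used to record the equivalent form $C_\Theta(I,J)=\Theta_{\{J,I\}}+2\,\Theta_{J,I}$. Comparing these two expressions for $C_\Theta(I,J)$ and cancelling one copy of $\Theta_{J,I}$ yields at once the key relation
$$\Theta_{I,J}-\Theta_{J,I}=\Theta_{\{J,I\}}.$$
(Equivalently, this relation is a single application of the graded Jacobi identity to $\{J,\{I,\Theta\}\}$, exactly of the kind carried out in the proof of Proposition~\ref{propconcThetaI}.) Note that this relation holds for \emph{any} pair of skew-symmetric $(1,1)$-tensors, with no anti-commutation assumption yet.

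First I would now bring in the hypothesis that $I$ and $J$ anti-commute. As already observed in the proof of Proposition~\ref{propconcThetaI}, in supergeometric terms anti-commutation of $I$ and $J$ is expressed by $\{J,I\}=2\,I\smc J$. Moreover $I\smc J$ is itself skew-symmetric in this situation, since $(I\smc J)^{*}=J^{*}I^{*}=(-J)(-I)=JI=-\,I\smc J$, so that $\Theta_{I\smc J}=\{I\smc J,\Theta\}$ is a legitimate deformation. Substituting $\{J,I\}=2\,I\smc J$ into the key relation gives
$$\Theta_{I,J}-\Theta_{J,I}=2\,\Theta_{I\smc J}.$$

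Finally I would combine this with the defining expression $C_\Theta(I,J)=\Theta_{I,J}+\Theta_{J,I}$. Solving the previous display for $\Theta_{J,I}=\Theta_{I,J}-2\,\Theta_{I\smc J}$ and substituting yields
$$C_\Theta(I,J)=\Theta_{I,J}+\bigl(\Theta_{I,J}-2\,\Theta_{I\smc J}\bigr)=2\bigl(\Theta_{I,J}-\Theta_{I\smc J}\bigr),$$
which is the assertion. There is no real obstacle here: the only genuinely non-formal ingredient is the passage $\{J,I\}=2\,I\smc J$ encoding the anti-commutation, and this has already been established; everything else is bookkeeping with the graded Jacobi identity and the two forms of the concomitant.
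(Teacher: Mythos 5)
Your proof is correct and follows essentially the same route as the paper's: a single application of the graded Jacobi identity (packaged via the two displayed forms of the concomitant) to get $\Theta_{I,J}-\Theta_{J,I}=\Theta_{\{J,I\}}$, followed by the anti-commutation encoding $\{J,I\}=2\,I\smc J$. Your extra verification that $I\smc J$ is skew-symmetric (so that $\Theta_{I\smc J}$ is a legitimate deformation) is a small point the paper leaves implicit, but otherwise the two arguments coincide.
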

 \begin{proof}
Since $I$ and $J$ anti-commute,  $\{I,J\}=-2 \,I \smc J$. Using the Jacobi identity, we have
$
\Theta_{J,I}= -2 \Theta_{I \smc J} + \Theta_{I,J}.$
Therefore,
$ C_\Theta(I,J)=\Theta _{I,J}+\Theta_{J,I}=2(\Theta _{I,J}-\Theta_{I \smc J}).$
\end{proof}

\begin{lem}  \label{lem20}
Let $I$ and $J$ be two skew-symmetric $(1,1)$-tensors on a pre-Courant algebroid $(E,\Theta)$ such that $I$ is Nijenhuis for $\Theta$. If $(I,J)$ is a compatible pair w.r.t. $\Theta$, then, for all sections $X$ and $Y$ of $E$,
\begin{equation*}
[X,Y]_{{{\scriptsize \underbrace{I, \dots,I}_{n}},J}}=[X,Y]_{I^n \smc J}.
\end{equation*}
\end{lem}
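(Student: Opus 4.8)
The plan is to argue by induction on $n$, proving the identity simultaneously for \emph{every} skew-symmetric tensor $J$ that forms a compatible pair with the fixed Nijenhuis tensor $I$ w.r.t.\ $\Theta$. The base case $n=0$ is immediate: both $[X,Y]_{J}$ and $[X,Y]_{I^{0}\smc J}$ reduce to $[X,Y]_{J}$, since $I^{0}=id_E$.

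For the inductive step I would split off the last copy of $I$ and write
\[ [X,Y]_{{\scriptsize \underbrace{I, \dots,I}_{n+1}}, J}=[X,Y]_{{\scriptsize \underbrace{I, \dots,I}_{n}}, I, J}, \]
reading the right-hand side as the bracket of the pre-Courant structure $\Theta_{n}$ deformed successively by $I$ and then by $J$. On $(E,\Theta_{n})$ the tensors $I$ and $J$ still anti-commute, and $C_{\Theta_{n}}(I,J)=0$ by Theorem~\ref{new thma} (the case $m=0$, $k=n$ of $C_{\Theta_{k}}(I,I^{m}\smc J)=0$, whose torsion hypotheses ${\mathcal T}_{\Theta}I(JX,Y)={\mathcal T}_{\Theta}I(X,JY)=0$ hold because $I$ is Nijenhuis). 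Hence Lemma~\ref{IoJ} applied to $\Theta_{n}$ gives $(\Theta_{n})_{I,J}=(\Theta_{n})_{I\smc J}$, that is,
\[ [X,Y]_{{\scriptsize \underbrace{I, \dots,I}_{n}}, I, J}=[X,Y]_{{\scriptsize \underbrace{I, \dots,I}_{n}}, I\smc J}. \]
Finally, $(I,I\smc J)$ is itself a compatible pair w.r.t.\ $\Theta$ by Theorem~\ref{propconcIn} (the case $n=1$), so the induction hypothesis applies with second tensor $I\smc J$ and yields
\[ [X,Y]_{{\scriptsize \underbrace{I, \dots,I}_{n}}, I\smc J}=[X,Y]_{I^{n}\smc(I\smc J)}=[X,Y]_{I^{n+1}\smc J}, \]
which closes the induction.

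The step I expect to be the real obstacle is precisely this conversion of a double deformation into a single one. The naive route --- using Proposition~\ref{lemA}~b) to replace the $n$ copies of $I$ by the single tensor $I^{n}$ and then applying Lemma~\ref{IoJ} to the pair $(I^{n},J)$ --- breaks down, because $I^{n}$ is skew-symmetric only for odd $n$: for even $n$ it is symmetric, as $(I^{n})^{*}=(I^{*})^{n}=(-1)^{n}I^{n}$, whereas Lemma~\ref{IoJ} (and the compatibility results) require skew-symmetric tensors. The inductive peeling avoids this by only ever feeding genuinely skew-symmetric tensors into Lemma~\ref{IoJ} and into Theorems~\ref{propconcIn} and~\ref{new thma}, namely $I$ and the tensors $I^{k}\smc J$, each of which is skew-symmetric since $(I^{k}\smc J)^{*}=-\,I^{k}\smc J$. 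The residual verifications are routine: that $I$ anti-commutes with $I^{k}\smc J$ (immediate from $IJ=-JI$), that $(E,\Theta_{n})$ is again a pre-Courant algebroid, and that equalities of the associated degree-$3$ functions descend to equalities of Dorfman brackets.
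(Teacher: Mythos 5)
Your proof is correct and is essentially the paper's own argument: the paper also peels off the copy of $I$ adjacent to $J$ by combining Theorem~\ref{new thma} (which gives $C_{\Theta_{n-1}}(I,J)=0$, the torsion hypotheses holding since $I$ is Nijenhuis) with Lemma~\ref{IoJ} applied to the pre-Courant structure $\Theta_{n-1}$, and then repeats this procedure $n-1$ times, using that $I$ anti-commutes with $I^k \smc J$ for every $k$. Your explicit induction on $n$, quantified over all skew-symmetric second tensors compatible with $I$ (with Theorem~\ref{propconcIn} supplying compatibility of $(I, I\smc J)$), is just a cleaner formalization of the paper's ``repeat the procedure'' step, and your observation that $I^n$ fails to be skew-symmetric for even $n$ is precisely why both proofs must proceed one factor of $I$ at a time.
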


\begin{proof}
Theorem \ref{new thma} ensures that, for all $n \in \Nn$, $C_{\Theta_n}(I,J)=0$ and, applying Lemma \ref{IoJ} for the pre-Courant structure $\Theta_{n-1}$, we get
\begin{align*}
[X,Y]_{{{\scriptsize \underbrace{I, \dots,I}_{n}},J}}& =[X,Y]_{{{\scriptsize \underbrace{I, \dots,I}_{n-1}},I,J}}=\{\{X, (\Theta_{n-1})_{I,J}\}, Y\}\\
&= \{\{X, (\Theta_{n-1})_{I \smc  J}\}, Y\}=
[X,Y]_{{{\scriptsize \underbrace{I, \dots,I}_{n-1}},I \smc J}}.
\end{align*}

Since, for every $k \in \Nn$, $I$ anti-commutes with $I^k \smc J$, we may repeat $n-1$ times this procedure to end up with
$$[X,Y]_{{{\scriptsize \underbrace{I, \dots,I}_{n}},J}}=[X,Y]_{I^n \smc J}.$$
\end{proof}

\begin{rem}
In Lemma \ref{lem20}, we may replace the assumption that $I$ is Nijenhuis for $\Theta$ by ${\mathcal T}_{\Theta}I(JX,Y)= {\mathcal T}_{\Theta}I(X, JY)=0$, for all sections $X$ and $Y$ of $E$.
\end{rem}

\begin{thm}\label{propconcgeral}
    Let $I$ and $J$ be two skew-symmetric $(1,1)$-tensors on a pre-Courant algebroid $(E,\Theta)$, such that $I$ is Nijenhuis and $(I,J)$ is a compatible pair  w.r.t. $\Theta$. Then,
     \begin{equation}  \label{item_b}
  C_{\Theta_k}(I^{2s+1},I^n\smc J)=0
  \end{equation}
and
$(I^{2s+1},I^n\smc J)$ is a compatible pair w.r.t. $\Theta_k$, for all $k,n,s \in \mathbb{N}$.
      Moreover, if $J$ is Nijenhuis tensor, then
  \begin{equation}  \label{item_c}
         C_{\Theta_k}(I^{2s+1},I^n\smc J^{2m+1})=0
         \end{equation}
and
      $(I^{2s+1},I^n \smc J^{2m+1})$ is a compatible pair w.r.t. $\Theta_k$, for all $k,m,n,s \in \mathbb{N}$.
\end{thm}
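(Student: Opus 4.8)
The plan is to establish the result in two stages, proving (\ref{item_b}) first and then bootstrapping to (\ref{item_c}), and in both cases to reduce everything back to Theorem~\ref{new thma}, which already handles the pair $(I, I^n \smc J)$ with respect to every $\Theta_k$. The essential observation driving the argument is that Corollary~\ref{Nijenhuis} and Proposition~\ref{Nijenhuis_theta_m} guarantee that $I$ is Nijenhuis for each $\Theta_k$, and Corollary~\ref{hierarchy_Nijenhuis} together with Proposition~\ref{Nijenhuis_theta_m} shows that $I^{2s+1}$ is likewise Nijenhuis for each $\Theta_k$. Thus odd powers of $I$ behave, as far as torsion is concerned, exactly like $I$ itself.

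To prove (\ref{item_b}), I would first verify the hypotheses of Theorem~\ref{new thma} with $I^{2s+1}$ playing the role of the distinguished Nijenhuis tensor. Concretely, I need the two conditions ${\mathcal T}_{\Theta}I^{2s+1}((I^n\smc J)X, Y) = {\mathcal T}_{\Theta}I^{2s+1}(X,(I^n\smc J)Y) = 0$, or rather their $\Theta$-analogues, together with the compatibility of the pair $(I^{2s+1}, I^n \smc J)$ w.r.t.\ $\Theta$. Since $I$ is Nijenhuis, ${\mathcal T}_\Theta I = 0$ everywhere, so by Corollary~\ref{hierarchy_Nijenhuis} we have ${\mathcal T}_\Theta I^{2s+1} = 0$ as well, making the vanishing-on-image conditions automatic. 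For the compatibility, I would use Lemma~\ref{IoJ} and the fact that $I^{2s+1}$ anti-commutes with $I^n \smc J$ (both are odd-in-$I$ or involve $J$, and $I$ anti-commutes with $J$), reducing $C_\Theta(I^{2s+1}, I^n\smc J)$ to a concomitant that vanishes by the $n=0$ case, namely $C_\Theta(I^{2s+1}, J)=0$. The latter should follow from the original compatibility $C_\Theta(I,J)=0$ by a recursion on powers of $I$ analogous to Proposition~\ref{extensionmagri}, using that each extra factor of $I$ contributes only torsion terms of $I$, which vanish. Once the hypotheses are checked, Theorem~\ref{new thma} (applied with $I^{2s+1}$ in place of $I$, legitimate because $I^{2s+1}$ is Nijenhuis and $\Theta_k$ is expressible as a deformation by odd powers via Proposition~\ref{lemA}) delivers $C_{\Theta_k}(I^{2s+1}, (I^{2s+1})^j \smc (I^n\smc J))=0$; specializing appropriately, and absorbing powers of $I^{2s+1}$ into powers of $I$, yields (\ref{item_b}).

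For (\ref{item_c}), the strategy is to exploit the symmetry of the situation: now $J$ too is Nijenhuis, so by Corollary~\ref{hierarchy_Nijenhuis} every odd power $J^{2m+1}$ is Nijenhuis for $\Theta$, and the pair becomes symmetric in the two Nijenhuis tensors. I would first replace $J$ by $J^{2m+1}$ in the role of the second tensor. The key auxiliary fact needed is that $(I, J^{2m+1})$ is still a compatible pair w.r.t.\ $\Theta$, which I would obtain by a recursion on $m$ using Proposition~\ref{extensionmagri}-type identities applied to $J$ rather than $I$, together with ${\mathcal T}_\Theta J = 0$; this shows $C_\Theta(I, J^{2m+1}) = 0$ and the anti-commutation of $I$ with $J^{2m+1}$ (odd power of $J$ still anti-commutes with $I$). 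I would also need ${\mathcal T}_\Theta I((J^{2m+1})X, Y) = 0$, which holds because $I$ is Nijenhuis. Having reduced to a compatible pair $(I, J^{2m+1})$ satisfying the torsion-on-image hypotheses, I then invoke the already-proven statement (\ref{item_b}) with $J$ replaced by $J^{2m+1}$, obtaining $C_{\Theta_k}(I^{2s+1}, I^n \smc J^{2m+1}) = 0$ and the desired compatibility for all $k,m,n,s$.

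The main obstacle I anticipate is the careful bookkeeping of anti-commutation relations and the legitimacy of treating $I^{2s+1}$ and $J^{2m+1}$ as honest skew-symmetric Nijenhuis tensors to which the earlier theorems apply verbatim. In particular, one must confirm that $I^{2s+1}$ is genuinely skew-symmetric (odd powers of a skew-symmetric tensor are skew-symmetric, since $(I^{2s+1})^* = (I^*)^{2s+1} = (-I)^{2s+1} = -I^{2s+1}$), that $I^{2s+1}$ and $I^n \smc J^{2m+1}$ anti-commute (counting parities of $I$- and $J$-factors and using that $I$ and $J$ anti-commute), and that the deformation $\Theta_k$ by $k$ copies of $I$ agrees, via Proposition~\ref{lemA}, with a deformation by a single odd power so that the Nijenhuis property transfers. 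The recursive verification of $C_\Theta(I^{2s+1}, J) = 0$ and $C_\Theta(I, J^{2m+1}) = 0$ from the base case, while conceptually routine given Proposition~\ref{extensionmagri}, is where the torsion-vanishing hypotheses are genuinely consumed and must be tracked with care.
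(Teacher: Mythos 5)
Your overall architecture (prove (\ref{item_b}) first, then bootstrap to (\ref{item_c}) by substituting $J^{2m+1}$ for $J$) matches the paper's, and your bookkeeping observations (odd powers of skew-symmetric tensors are skew-symmetric, the relevant anti-commutations, Corollary~\ref{hierarchy_Nijenhuis} and Proposition~\ref{Nijenhuis_theta_m} for the Nijenhuis property of odd powers) are correct. But there is a genuine gap in your passage from $k=0$ to arbitrary $k$. You propose to apply Theorem~\ref{new thma} with $I^{2s+1}$ in place of $I$, justified by the claim that ``$\Theta_k$ is expressible as a deformation by odd powers via Proposition~\ref{lemA}''. That claim is false: by Proposition~\ref{lemA}, deforming $\Theta$ repeatedly ($j$ times) by $I^{2s+1}$ produces exactly $\Theta_{j(2s+1)}$, so Theorem~\ref{new thma} applied to the tensor $I^{2s+1}$ only yields vanishing of the concomitant with respect to the sub-family $\Theta_{j(2s+1)}$, $j\in\Nn$. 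For instance, when $s=1$ it says nothing about $\Theta_1=\Theta_I$, which is not a deformation of $\Theta$ by $I^3$ any number of times. The paper closes this gap by a different mechanism, which your proposal lacks: the hypotheses of the theorem for the \emph{original} pair are stable under deformation by $I$ --- $I$ stays Nijenhuis for every $\Theta_k$ (Corollary~\ref{Nijenhuis}) and $(I,J)$ stays a compatible pair w.r.t. every $\Theta_k$ (Theorem~\ref{new thma} at $n=0$) --- so the whole $k=0$ argument can be rerun verbatim with $\Theta_k$ as the base structure. Some such stability argument is indispensable.

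A secondary weakness is that both your $k=0$ step and your stage-two auxiliary fact lean on unspecified recursions ``analogous to Proposition~\ref{extensionmagri}''. That proposition crucially has the same tensor occupying the first slot and serving as the factor peeled off the second slot: it relates $C_\Theta(I,I^n\smc J)$ to $C_\Theta(I,I^{n-1}\smc J)$, but it does not relate $C_\Theta(I^{2s+1},I^n\smc J)$ to $C_\Theta(I^{2s+1},I^{n-1}\smc J)$, nor does it produce $C_\Theta(I,J^{2m+1})=0$: applied with $J$ in the first slot it gives $C_\Theta(J,J^{2m}\smc I)=0$, in which the first argument remains $J$ --- a different statement. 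A mixed-tensor recursion would carry Nijenhuis-concomitant correction terms that you would have to derive from scratch. The paper avoids all of this: at $k=0$ it writes, via Lemma~\ref{IoJ}, $C_\Theta(I^{2s+1},I^n\smc J)(X,Y)=2\left([X,Y]_{I^{2s+1},I^n\smc J}-[X,Y]_{I^{2s+1}\smc(I^n\smc J)}\right)$ and identifies the two brackets using Lemma~\ref{lem20} (for the pair $(I,I^n\smc J)$, compatible by Theorem~\ref{propconcIn}) together with Lemma~\ref{lemA}~b); and in part two it obtains $C_\Theta(I,J^{2m+1})=0$ simply by invoking the already-proved identity (\ref{item_b}) with the roles of $I$ and $J$ interchanged (taking $k=n=0$, $s=m$), which is legitimate since $J$ is then Nijenhuis. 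The latter fix is available to you verbatim; the former, or some genuine substitute for your recursion claims, needs an actual proof.
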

\begin{proof}
Let $I$ and $J$ be two skew-symmetric $(1,1)$-tensors which are compatible w.r.t $\Theta$ and
such that ${\mathcal T}_{\Theta}I=0$. Firstly, we prove that
        $$C_{\Theta}(I^{2s+1},I^n \smc J)=0, \quad \forall s,n \in \mathbb{N}.$$
        Since $I^{2s+1}$ anti-commutes with ${I^n \smc J}$, we may apply Lemma \ref{IoJ}:
        $$C_\Theta(I^{2s+1},{I^n \smc J})(X,Y)=2 ([X,Y]_{I^{2s+1},{I^n \smc J}}- [X,Y]_{I^{2s+1}\smc ({I^n \smc J})}).$$
From Theorem~\ref{propconcIn}, $(I, I^n \smc J)$ is a compatible pair w.r.t. $\Theta$ and, applying
Lemma \ref{lem20}, we get
\begin{align*}
            C_\Theta(I^{2s+1},{I^n \smc  J})(X,Y)&= 2([X,Y]_{I^{2s+1},{I^n \smc J}}- [X,Y]_{{\scriptsize \underbrace{I, \dots, I}_{2s+1},{I^n \smc J}}})\\
            &=2([X,Y]_{I^{2s+1}}-[X,Y]_{I^{2s+1}})_{I^n \smc J}=0,
        \end{align*}
        where we have used Lemma \ref{lemA} b) in the second equality. From (\ref{conc_0}), we get $C_\Theta(I^{2s+1},{I^n \smc  J})=0$.

        In order to prove the result for a general $\Theta_k$, notice that, due to Corollary~\ref{Nijenhuis} and Theorem~\ref{new thma}, the hypothesis originally satisfied for $\Theta$, are also satisfied for any of the pre-Courant structures $\Theta_k, k\in \mathbb{N}$. Therefore, we can replace in the above arguments $\Theta$ by any $\Theta_k, k\in \mathbb{N}$.

Now, suppose that $I$ and $J$ are both Nijenhuis for $\Theta$. Since they play symmetric roles, we may intertwine them in (\ref{item_b}) and, taking  $k=0$, $n=0$ and $s=m$, we obtain $C_\Theta (I, J^{2m+1})=0$. Because $I$ and $J^{2m+1}$ anti-commute, we conclude that $(I,J^{2m+1})$ is a compatible pair w.r.t. $\Theta$. Thus, we may apply again (\ref{item_b}), taking $J^{2m+1}$ instead of $J$, to get $C_{\Theta_k}(I^{2s+1},I^n \smc J^{2m+1})=0$ and, because $I^{2s+1}$ anti-commutes with $I^n\smc J^{2m+1}$, the pair $(I^{2s+1},I^n\smc J^{2m+1})$ is a compatible pair w.r.t. $\Theta_k$, for all $k,m,n,s \in \mathbb{N}$.
\end{proof}


\section{Hierarchies of deforming-Nijenhuis pairs}

We introduce the notion of deforming-Nijenhuis pair as well as the definitions of Poisson tensor and Poisson-Nijenhuis pair on a pre-Courant algebroid. We construct several hierarchies of deforming-Nijenhuis and Poisson-Nijenhuis pairs.

\subsection{Hierarchy of deforming-Nijenhuis pairs for $\Theta_k$, $k \in \Nn$}
Starting with a deforming-Nijenhuis pair $(J,I)$ for $\Theta$, we prove, in a first step, that it is also a deforming-Nijenhuis pair for  $\Theta_k$, for all $k \in \Nn$. Then, we
construct a hierarchy $(J, I^{2n+1})_{n \in \Nn}$ of deforming-Nijenhuis pairs for $\Theta_k$, for all $k \in \Nn$.

\begin{defn}
Let $I$ and $J$ be two skew-symmetric $(1,1)$-tensors on a pre-Courant algebroid $(E, \Theta)$. The pair $(J,I)$ is said to be a {\em deforming-Nijenhuis pair} for $\Theta$ if
\begin{itemize}
\item $(J,I)$ is a compatible pair  w.r.t. $\Theta$;
\item $J$ is deforming for $\Theta$;
\item $I$ is Nijenhuis for $\Theta$.
\end{itemize}
\end{defn}

We need the following lemmas.

\begin{lem} \label{theta_cocycle}
Let $I$ and $J$ be two anti-commuting skew-symmetric $(1,1)$-tensors on a pre-Courant algebroid $(E,\Theta)$. Then, for all $k \in \Nn$,
\begin{equation}  \label{eq29}
 \left((\Theta_r)_{\{J, \{I,J\}\}} \right)_{\scriptsize \underbrace{I, \dots, I}_s}=(\Theta_{\{J, \{I,J\}\}})_{\scriptsize \underbrace{I, \dots, I}_k},
\end{equation}
for all $r, s \in \Nn$ such that $r+s=k$.

In particular,
\begin{itemize}
\item [i)] if $\Theta_{\{J, \{I,J\}\}}=\lambda_0 \Theta_{J,J,I}$, for some $\lambda_0 \in \Rr$, then
\begin{equation*}
(\Theta_k)_{\{J, \{I,J\}\}}=\lambda_0 (\Theta_{J,J})_{ {\scriptsize \underbrace{I, \dots, I}_{k+1}}}, \, \, \forall  k \in \Nn;
\end{equation*}
\item [ii)] if $\{J, \{I,J\}\}$ is a $\Theta$-cocycle, then it is a $\Theta_k$-cocycle, for all $k \in \Nn$.
\end{itemize}
\end{lem}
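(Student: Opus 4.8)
The plan is to reduce the whole lemma to a single structural fact: that the deformation operator $\{I,\cdot\}$ and the deformation operator $\{K,\cdot\}$, where $K:=\{J,\{I,J\}\}$, \emph{commute} on the tower of degree-$3$ functions generated by $\Theta$. Writing $D_I=\{I,\cdot\}$ and $D_K=\{K,\cdot\}$, the graded Jacobi identity gives, for every function $\Xi$,
\[
D_I D_K \Xi = \{I,\{K,\Xi\}\}=\{\{I,K\},\Xi\}+\{K,\{I,\Xi\}\}=\{\{I,K\},\Xi\}+D_K D_I \Xi,
\]
the sign being $+1$ because $I$ and $K$ both have degree $2$. Hence $D_I$ and $D_K$ commute as soon as $\{I,K\}=0$. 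Granting this, (\ref{eq29}) is immediate: its left-hand side is $D_I^{s}D_K D_I^{r}\Theta$, and sliding $D_K$ past the $s$ copies of $D_I$ turns it into $D_K D_I^{r+s}\Theta=D_K D_I^{k}\Theta=D_I^{k}D_K\Theta$, which is precisely the right-hand side.

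The main step, and the only genuine computation, is therefore $\{I,K\}=0$. First I would check that $K$ is a skew-symmetric $(1,1)$-tensor and compute it explicitly, using only the paper's rule $\{X,Y\}=-2\,X\smc Y$ valid for anti-commuting skew-symmetric tensors (Lemma~\ref{IoJ}). Since $I\smc J=-J\smc I$ one has $\{I,J\}=-2\,I\smc J$; moreover $I\smc J$ is skew-symmetric and anti-commutes with $J$ (indeed $J\smc(I\smc J)+(I\smc J)\smc J=J\smc I\smc J+I\smc J^2=0$, because $J\smc I=-I\smc J$), so applying the rule a second time yields
\[
K=\{J,\{I,J\}\}=-2\,\{J,I\smc J\}=-4\,I\smc J^2 .
\]
The anti-commutation hypothesis enters decisively here: from $I\smc J=-J\smc I$ it follows that $I$ commutes with $J^2$, i.e. $I\smc J^2=J^2\smc I$, whence $I$ commutes with $K=-4\,I\smc J^2$. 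Since the big bracket restricted to skew-symmetric $(1,1)$-tensors is, up to sign, the commutator of endomorphisms of $E$ (the Lie algebra $\mathfrak{o}(E)$), and since it vanishes on commuting tensors, we conclude $\{I,K\}=0$. This is the crux, and the place where the standard degree-$2$ sign bookkeeping must be handled with care.

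With $\{I,K\}=0$ secured, the two special cases fall out by applying $D_I^{k}$ to the degree-$3$ identity holding at the level of $\Theta$ and commuting it past $D_K$. For \emph{i)}, one has $(\Theta_k)_{\{J,\{I,J\}\}}=D_K D_I^{k}\Theta=D_I^{k}D_K\Theta=D_I^{k}\,\Theta_{\{J,\{I,J\}\}}$, so the hypothesis $\Theta_{\{J,\{I,J\}\}}=\lambda_0\,\Theta_{J,J,I}$ gives $\lambda_0\,D_I^{k}\Theta_{J,J,I}=\lambda_0\,D_I^{k+1}\Theta_{J,J}$, which is $\lambda_0\,(\Theta_{J,J})$ deformed by $I$ exactly $k+1$ times. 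For \emph{ii)}, $K$ being a $\Theta$-cocycle means $\{\Theta,K\}=0$, equivalently $\Theta_K=0$ by graded antisymmetry (since $\{\Theta,K\}=-\Theta_K$); then $(\Theta_k)_K=D_I^{k}\Theta_K=0$, so $\{\Theta_k,K\}=0$ and $K$ is a $\Theta_k$-cocycle. The single point to watch throughout is the degree-induced signs in the Jacobi and antisymmetry identities; it is the anti-commutation of $I$ and $J$, funnelled through the commutation of $I$ with $J^2$, that makes the obstruction $\{I,K\}$ vanish.
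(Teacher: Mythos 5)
Your proof is correct and takes essentially the same route as the paper's: the paper's own argument likewise rests on the observation that anti-commutation of $I$ and $J$ forces $I$ to commute with $I\smc J^{2}$, hence $\{I,\{J,\{I,J\}\}\}=0$, and then applies the Jacobi identity to slide the deformation by $\{J,\{I,J\}\}$ past the successive deformations by $I$. Your commuting-operators formulation $D_I D_K = D_K D_I$ is simply a compact repackaging of the paper's iterated identity $\Theta_{I,\{J,\{I,J\}\}}=\Theta_{\{J,\{I,J\}\},I}$, with the special cases i) and ii) handled identically.
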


\begin{proof}
Let $I$ and $J$ be two skew-symmetric $(1,1)$-tensors on $(E,\Theta)$ that anti-commute. Then, we have
\begin{equation}  \label{commuting}
I \smc (I \smc J^2)=(I \smc J^2) \smc I \Leftrightarrow \{I, I \smc J^2 \}=0 \Leftrightarrow \{I, \{J, \{J, I \} \} \}=0.
\end{equation}
Using the Jacobi identity, (\ref{commuting}) implies
\begin{equation}  \label{theta_commuting}
\Theta_{I, \{J, \{J, I \} \}}= \Theta_{\{J, \{J, I \} \},I}.
\end{equation}
Since (\ref{theta_commuting}) holds for any pre-Courant structure on $E$, we may write
\begin{equation*}
(\Theta_{\scriptsize \underbrace{I, \dots, I}_{r+s}})_{\{J, \{I,J\}\}}=(\Theta_{\scriptsize \underbrace{I, \dots, I}_{r+s-1}})_{\{J, \{I,J\}\},I}.
\end{equation*}
Repeating the procedure $(s-1)$ times, we get (\ref{eq29}). The particular cases follow immediately.
 \end{proof}

 \begin{lem}  \label{lemJdeforming}
Let $I$ and $J$ be two skew-symmetric $(1,1)$-tensors on a pre-Courant algebroid $(E,\Theta)$. Then,
\begin{equation}  \label{thetaJIJ}
\Theta_{J,I,J}= \frac{1}{3} \left( \Theta_{J,J,I}+ \Theta_{\{J,\{I,J\}\}}+ \{J, C_{\Theta} (I,J)\} \right);
\end{equation}
\begin{equation}  \label{thetaIJJ}
\Theta_{I,J,J}= -\frac{1}{3} \left( \Theta_{J,J,I}+ \Theta_{\{J,\{I,J\}\}}-2\{J, C_{\Theta} (I,J)\} \right).
\end{equation}
\end{lem}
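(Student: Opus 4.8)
The plan is to work entirely in the supergeometric picture and to reduce both displayed identities to a single operator identity for the big bracket. Write $\mathrm{ad}_I := \{I,\cdot\}$ and $\mathrm{ad}_J := \{J,\cdot\}$. Since $I$ and $J$ are skew-symmetric $(1,1)$-tensors they correspond to functions of degree $2$, and for the big bracket (of degree $-2$) the graded Jacobi identity carries the sign $(-1)^{(|a|-2)(|b|-2)}$; for a degree-$2$ function this sign is $+1$, so each of $\mathrm{ad}_I,\mathrm{ad}_J$ is an (even, sign-free) derivation of $\{\cdot,\cdot\}$, and for two degree-$2$ functions $f,g$ one has the Lie-morphism relation $\mathrm{ad}_{\{f,g\}} = \mathrm{ad}_f\mathrm{ad}_g - \mathrm{ad}_g\mathrm{ad}_f = [\mathrm{ad}_f,\mathrm{ad}_g]$. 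With the convention $\Theta_{a,b,c} = \mathrm{ad}_c\mathrm{ad}_b\mathrm{ad}_a\,\Theta$, I record $A := \Theta_{J,I,J} = \mathrm{ad}_J\mathrm{ad}_I\mathrm{ad}_J\,\Theta$, $B := \Theta_{J,J,I} = \mathrm{ad}_I\mathrm{ad}_J^2\,\Theta$ and $D := \Theta_{I,J,J} = \mathrm{ad}_J^2\mathrm{ad}_I\,\Theta$.

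First I would establish the master identity
\[ \Theta_{\{J,\{I,J\}\}} = 2\,\Theta_{J,I,J} - \Theta_{I,J,J} - \Theta_{J,J,I}. \]
Since $\{I,J\} = \mathrm{ad}_I J$ and $\{J,\{I,J\}\} = \mathrm{ad}_J\mathrm{ad}_I J$, applying the Lie-morphism relation twice gives $\mathrm{ad}_{\{J,\{I,J\}\}} = [\mathrm{ad}_J,[\mathrm{ad}_I,\mathrm{ad}_J]]$. Expanding the double commutator of operators yields $[\mathrm{ad}_J,[\mathrm{ad}_I,\mathrm{ad}_J]] = 2\,\mathrm{ad}_J\mathrm{ad}_I\mathrm{ad}_J - \mathrm{ad}_J^2\mathrm{ad}_I - \mathrm{ad}_I\mathrm{ad}_J^2$, and evaluating on $\Theta$ produces exactly $2A - D - B$, which is the master identity.

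Next I would record the elementary observation that, straight from the definition $C_\Theta(I,J) = \Theta_{I,J} + \Theta_{J,I}$, deforming once more by $J$ gives $\{J,C_\Theta(I,J)\} = \Theta_{I,J,J} + \Theta_{J,I,J} = D + A$. It then remains only to combine the two facts algebraically: substituting $\Theta_{\{J,\{I,J\}\}} = 2A - D - B$ and $\{J,C_\Theta(I,J)\} = A + D$ into the right-hand side of (\ref{thetaJIJ}) collapses it to $A = \Theta_{J,I,J}$, and into the right-hand side of (\ref{thetaIJJ}) collapses it to $D = \Theta_{I,J,J}$. In fact this shows that the two stated identities are equivalent, both being repackagings of the single master identity.

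The only delicate point, and the one I would verify most carefully, is the sign bookkeeping in the graded Jacobi identity: namely that degree-$2$ elements make $\mathrm{ad}_I,\mathrm{ad}_J$ honest (sign-free) derivations, and hence that $\mathrm{ad}_{\{f,g\}} = [\mathrm{ad}_f,\mathrm{ad}_g]$ with no stray signs. This is exactly the sign pattern already used in the proof of Theorem~\ref{hierarchycourant}. Everything else is the formal expansion of a double commutator, and I would emphasize that no extra hypothesis on $I$ and $J$ (anti-commutation, vanishing torsion, deforming) is needed for this lemma.
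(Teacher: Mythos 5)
Your proposal is correct and takes essentially the same route as the paper, whose entire proof is the single sentence that the formul\ae{} ``are obtained by application of the Jacobi identity'': your operator identities $\mathrm{ad}_{\{f,g\}}=[\mathrm{ad}_f,\mathrm{ad}_g]$ for degree-$2$ functions and the resulting master identity $\Theta_{\{J,\{I,J\}\}}=2\Theta_{J,I,J}-\Theta_{I,J,J}-\Theta_{J,J,I}$ are just a systematic bookkeeping of those Jacobi-identity applications, with the signs handled correctly (degree-$2$ elements have shifted degree $0$, so their adjoint actions are sign-free derivations, consistent with the computations in Theorem~\ref{hierarchycourant}). The substitution check recovering both \eqref{thetaJIJ} and \eqref{thetaIJJ}, and your observation that no hypotheses beyond skew-symmetry are needed, are sound.
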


\begin{proof}
The formul{\ae} are obtained by application of the Jacobi identity.
 \end{proof}

\

As a particular case of the previous lemma, we have the following:

\begin{cor}
If $I$ and $J$ anti-commute w.r.t. $\Theta$, and if $\Theta_{\{J,\{I,J\}\}}=\lambda_0\Theta_{J,J,I}$, $\lambda_0 \in \Rr$, then
\begin{equation}  \label{alpha}
\Theta_{I,J,J}= \alpha \Theta_{J,J,I},
\end{equation}
with $\alpha= - \frac{\lambda_0+1}{3}$. Moreover, if $J$ is deforming for $\Theta$, i.e., $\Theta_{J,J}=\eta \, \Theta$, with $\eta  \in \Rr$, then  $J$ is deforming for $\Theta_I$. More precisely,
$
\Theta_{I,J,J}= \eta  \, \alpha \, \Theta_I
$.
\end{cor}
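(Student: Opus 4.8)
The statement to prove is the Corollary following Lemma~\ref{lemJdeforming}. The plan is to feed the two hypotheses directly into the identities \eqref{thetaJIJ} and \eqref{thetaIJJ}, which already express $\Theta_{J,I,J}$ and $\Theta_{I,J,J}$ in terms of $\Theta_{J,J,I}$, $\Theta_{\{J,\{I,J\}\}}$ and the concomitant term $\{J,C_\Theta(I,J)\}$. The anti-commutation hypothesis for $\Theta$ means precisely $C_\Theta(I,J)=0$, so the last summand in both formul{\ae} drops out immediately, and the second hypothesis $\Theta_{\{J,\{I,J\}\}}=\lambda_0\Theta_{J,J,I}$ collapses the remaining two terms into a scalar multiple of $\Theta_{J,J,I}$.

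First I would substitute into \eqref{thetaIJJ}. With $C_\Theta(I,J)=0$ and $\Theta_{\{J,\{I,J\}\}}=\lambda_0\Theta_{J,J,I}$, the formula becomes
\begin{equation*}
\Theta_{I,J,J}=-\frac{1}{3}\left(\Theta_{J,J,I}+\lambda_0\Theta_{J,J,I}\right)=-\frac{\lambda_0+1}{3}\,\Theta_{J,J,I},
\end{equation*}
which is exactly \eqref{alpha} with $\alpha=-\tfrac{\lambda_0+1}{3}$. This establishes the first assertion.

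For the ``deforming'' part I would read $\Theta_{I,J,J}$ as the double deformation of $\Theta$ by $J$, performed on the structure $\Theta_I$; that is, $\Theta_{I,J,J}=(\Theta_I)_{J,J}$. The hypothesis that $J$ is deforming for $\Theta$ gives $\Theta_{J,J}=\eta\,\Theta$. I would then observe that on the right-hand side of \eqref{alpha} we have $\Theta_{J,J,I}=(\Theta_{J,J})_I=(\eta\,\Theta)_I=\eta\,\Theta_I$, using linearity of the deformation operation $\cdot\mapsto\{I,\cdot\}$ in its argument. Combining this with \eqref{alpha} yields
\begin{equation*}
(\Theta_I)_{J,J}=\Theta_{I,J,J}=\alpha\,\Theta_{J,J,I}=\alpha\,\eta\,\Theta_I,
\end{equation*}
which is both the formula $\Theta_{I,J,J}=\eta\,\alpha\,\Theta_I$ and, by Definition~\ref{deforming}, the statement that $J$ is deforming for $\Theta_I$ (with deforming scalar $\eta\,\alpha$).

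The only subtle point, and the place I would be most careful, is the bookkeeping of which tensor deforms which structure in the iterated subscript notation: one must read $\Theta_{I,J,J}$ as $((\Theta_I)_J)_J=(\Theta_I)_{J,J}$ and $\Theta_{J,J,I}$ as $((\Theta_J)_J)_I=(\Theta_{J,J})_I$, and then exploit that $\{I,-\}$ is $\Rr$-linear so it commutes past the scalar $\eta$. Everything else is a direct substitution into the pre-established identities of Lemma~\ref{lemJdeforming}, so no genuine computational obstacle remains once the hypotheses are plugged in.
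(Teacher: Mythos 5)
Your proof is correct and follows exactly the route the paper intends: the paper states this corollary as ``a particular case of the previous lemma,'' i.e.\ precisely the substitution of $C_\Theta(I,J)=0$ and $\Theta_{\{J,\{I,J\}\}}=\lambda_0\Theta_{J,J,I}$ into \eqref{thetaIJJ}, followed by $\Theta_{J,J,I}=(\Theta_{J,J})_I=\eta\,\Theta_I$. Your careful reading of the iterated-subscript notation ($\Theta_{I,J,J}=(\Theta_I)_{J,J}$ and $\Theta_{J,J,I}=(\Theta_{J,J})_I$) is exactly the right bookkeeping and matches the paper's conventions.
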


\

\begin{lem}  \label{theta_n_JJI}
Let $I$ and $J$ be two skew-symmetric $(1,1)$-tensors on a pre-Courant algebroid $(E,\Theta)$ such that $(I,J)$ is a compatible pair w.r.t. $\Theta$ and ${\mathcal T}_\Theta I(JX,Y)= {\mathcal T}_\Theta I (X,JY)=0$, for all sections $X$ and $Y$ of $E$. If $\Theta_{\{J,\{I,J\}\}}=\lambda_0\Theta_{J,J,I}$, for some $\lambda_0 \in \Rr \backslash \{\frac{4}{(-3)^m -1}, m \in \Nn \}$, then, for all $k \in \Nn$,
\begin{enumerate}
  \item[(a)] $(\Theta_{k})_{\{J,\{I,J\}\}}=\lambda_k(\Theta_k)_{J,J,I}$, where $\lambda_k$ is defined recurrently\footnote{Explicitly, $\lambda_k=\frac{(-3)^k \lambda_0}{1+ \frac{1-(-3)^k}{4}\lambda_0}$, for all $k \in \mathbb{N}$.} by $\lambda_k=\frac{-3\lambda_{k-1}}{1+\lambda_{k-1}}$, $k \geq 1$,
  \item[(b)] $\lambda_k (\Theta_k)_{J,J,I}= \lambda_0 \Theta_{J,J, {\scriptsize \underbrace{I, \dots, I}_{k+1}}}$.
  \item[(c)] If, in particular, $\lambda_0=0$, then $(\Theta_k)_{J,J}=(-\frac{1}{3})^k \Theta_{J,J, {\scriptsize \underbrace{I, \dots, I}_{k}}}$, for all $k \in \Nn$.
\end{enumerate}
\end{lem}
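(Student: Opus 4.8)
The plan is to prove (a) by induction on $k$, to deduce (b) at once from (a) together with Lemma~\ref{theta_cocycle}\,(i), and to read off (c) as the degenerate case $\lambda_0=0$ of the resulting recurrence.

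For (a), the base case $k=0$ is precisely the hypothesis $\Theta_{\{J,\{I,J\}\}}=\lambda_0\Theta_{J,J,I}$. Before running the induction I would record two facts that legitimate each step at the level of the deformed structures $\Theta_k$. First, since $(I,J)$ is a compatible pair w.r.t.\ $\Theta$ with ${\mathcal T}_\Theta I(JX,Y)={\mathcal T}_\Theta I(X,JY)=0$, Theorem~\ref{new thma} in the case $n=0$ yields $C_{\Theta_k}(I,J)=0$ for every $k\in\Nn$; hence the specialization of~(\ref{thetaIJJ}) to a vanishing concomitant, i.e.\ the Corollary following Lemma~\ref{lemJdeforming}, applies to each $\Theta_k$. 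Second, identity~(\ref{theta_commuting}), which holds for \emph{any} pre-Courant structure, gives $(\Theta_k)_{I,\{J,\{I,J\}\}}=(\Theta_k)_{\{J,\{I,J\}\},I}$, so the deformation by $I$ passes freely through the deformation by $\{J,\{I,J\}\}$.

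Assuming (a) at level $k$, I would evaluate both sides of the target identity at level $k+1$. Writing $\Theta_{k+1}=(\Theta_k)_I$ and using the commuting identity followed by the inductive hypothesis, $(\Theta_{k+1})_{\{J,\{I,J\}\}}=\big((\Theta_k)_{\{J,\{I,J\}\}}\big)_I=\lambda_k(\Theta_k)_{J,J,I,I}$. On the other hand, applying~(\ref{thetaIJJ}) to $\Theta_k$ with $C_{\Theta_k}(I,J)=0$ and the inductive hypothesis gives $(\Theta_k)_{I,J,J}=\alpha_k(\Theta_k)_{J,J,I}$ with $\alpha_k=-\frac{1+\lambda_k}{3}$; deforming once more by $I$ yields $(\Theta_{k+1})_{J,J,I}=(\Theta_k)_{I,J,J,I}=\alpha_k(\Theta_k)_{J,J,I,I}$. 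Comparing the two expressions gives $(\Theta_{k+1})_{\{J,\{I,J\}\}}=\frac{\lambda_k}{\alpha_k}(\Theta_{k+1})_{J,J,I}$, that is, $\lambda_{k+1}=\frac{\lambda_k}{\alpha_k}=\frac{-3\lambda_k}{1+\lambda_k}$, which is the asserted recurrence.

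The only delicate point, and the main obstacle, is to guarantee $\alpha_k\neq0$, equivalently $\lambda_k\neq-1$, at every stage, so that the division defining $\lambda_{k+1}$ makes sense and the recurrence never breaks down; this is exactly what the restriction $\lambda_0\notin\{\frac{4}{(-3)^m-1}:m\in\Nn\}$ secures. Indeed, with the closed form $\lambda_k=\frac{(-3)^k\lambda_0}{1+\frac{1-(-3)^k}{4}\lambda_0}$ (checked by substitution into the recurrence), the denominator vanishes exactly when $\lambda_0=\frac{4}{(-3)^k-1}$ and one has $\lambda_k=-1$ exactly when $\lambda_0=\frac{4}{(-3)^{k+1}-1}$, both excluded. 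Granting (a), part (b) is immediate: Lemma~\ref{theta_cocycle}\,(i) gives $(\Theta_k)_{\{J,\{I,J\}\}}=\lambda_0\,\Theta_{J,J,{\scriptsize \underbrace{I, \dots, I}_{k+1}}}$, and substituting (a) on the left-hand side produces $\lambda_k(\Theta_k)_{J,J,I}=\lambda_0\,\Theta_{J,J,{\scriptsize \underbrace{I, \dots, I}_{k+1}}}$. Finally, for (c) the value $\lambda_0=0$ is admissible (since $\frac{4}{(-3)^m-1}\neq0$) and forces $\lambda_k=0$ and $\alpha_k=-\frac{1}{3}$ for all $k$; then $(\Theta_{k+1})_{J,J}=(\Theta_k)_{I,J,J}=-\frac{1}{3}\big((\Theta_k)_{J,J}\big)_I$, and a one-line induction gives $(\Theta_k)_{J,J}=(-\frac{1}{3})^k\,\Theta_{J,J,{\scriptsize \underbrace{I, \dots, I}_{k}}}$.
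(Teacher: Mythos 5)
Your proposal is correct and follows essentially the same route as the paper: induction on $k$ using Lemma~\ref{theta_cocycle} to commute the deformation by $I$ past $\{J,\{I,J\}\}$, formula~(\ref{alpha})/(\ref{thetaIJJ}) applied to $\Theta_k$ (legitimated by $C_{\Theta_k}(I,J)=0$ from Theorem~\ref{new thma}), then reading (b) off Lemma~\ref{theta_cocycle}\,(i) and (c) as the $\lambda_0=0$ case. Your explicit check that the excluded set $\{\frac{4}{(-3)^m-1}\}$ is exactly what keeps $\lambda_k\neq-1$ (so the division by $\alpha_k$ is licit) is a welcome detail the paper leaves implicit, but it does not change the argument.
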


\begin{proof}
\begin{enumerate}
  \item[(a)] We will prove this statement by induction. Suppose that, for some $k \geq 1$, $(\Theta_{k-1})_{\{J,\{I,J\}\}}=\lambda_{k-1}(\Theta_{k-1})_{J,J,I}$. Using Lemma~\ref{theta_cocycle} and the induction hypothesis, we have
      \begin{align*}
        (\Theta_{k})_{\{J,\{I,J\}\}}&=(\Theta_{k-1})_{\{J,\{I,J\}\},I}\\
        &=\lambda_{k-1}(\Theta_{k-1})_{J,J,I,I}.
      \end{align*}
      Applying formula (\ref{alpha}) for $\Theta_{k-1}$, we obtain
      $$(\Theta_{k})_{\{J,\{I,J\}\}}=\frac{-3\lambda_{k-1}}{1+\lambda_{k-1}}(\Theta_{k-1})_{I,J,J,I}
      =\lambda_k(\Theta_{k})_{J,J,I},$$
      with $\lambda_k=\frac{-3\lambda_{k-1}}{1+\lambda_{k-1}}$.
  \item[(b)] Starting from the previous statement, then using the Lemma~\ref{theta_cocycle} and the hypothesis, we have,
     $$
        \lambda_k (\Theta_{k})_{J,J,I}=(\Theta_{k})_{\{J,\{I,J\}\}}=\Theta_{\{J,\{I,J\}\},  {\scriptsize \underbrace{I, \dots, I}_{k}}}=\lambda_0\Theta_{J,J, {\scriptsize \underbrace{I, \dots, I}_{k+1}}}.
     $$
      \item[(c)] From Lemma~\ref{theta_cocycle} i), we get
      $$(\Theta_{k})_{\{J, \{I,J\}\}}=0, \,\,\forall k \in \Nn,$$
      while Theorem~\ref{new thma} gives
      $$C_{\Theta_k}(I,J)=0, \,\,\forall k \in \Nn.$$
      Thus, applying successively the formula (\ref{thetaIJJ}), yields
    $$
      (\Theta_k)_{J,J}= -\frac{1}{3} (\Theta_{k-1})_{J,J,I}= \cdots = (-\frac{1}{3})^k \Theta_{J,J, {\scriptsize \underbrace{I, \dots, I}_{k}}}.
      $$
\end{enumerate}
\end{proof}

 \begin{prop} \label{Jdeforming_n}
Let $I$ and $J$ be two skew-symmetric $(1,1)$-tensors on a pre-Courant algebroid $(E,\Theta)$ such that $(I,J)$ is a compatible pair w.r.t.  $\Theta$,  $ \Theta_{\{J, \{I,J\}\}}=\lambda_0 \Theta_{J,J,I}$, for some $\lambda_0 \in \Rr \backslash \{\frac{4}{(-3)^m -1}, m \in \Nn \}$ and ${\mathcal T}_\Theta I(JX,Y)={\mathcal T}_\Theta I(X,JY)=0$, for all sections $X$ and $Y$ of $E$. If $J$ is a deforming tensor for $\Theta$, then $J$ is also a deforming tensor for $\Theta_k$, for all $k \in \Nn$.
\end{prop}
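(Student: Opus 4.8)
The plan is to prove by induction on $k$ that there exist real constants $\eta_k$ with $(\Theta_k)_{J,J}=\eta_k\,\Theta_k$, which is exactly the assertion that $J$ is deforming for $\Theta_k$. The base case $k=0$ is the hypothesis that $J$ is deforming for $\Theta$, with $\eta_0=\eta$.

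For the inductive step I would assume $(\Theta_{k-1})_{J,J}=\eta_{k-1}\Theta_{k-1}$ and apply the Corollary following Lemma~\ref{lemJdeforming}, but with the base pre-Courant structure taken to be $\Theta_{k-1}$ instead of $\Theta$. This is legitimate since $\Theta_{k-1}$ is itself a pre-Courant structure and that corollary is valid for an arbitrary such structure; what must be checked is that its two hypotheses are inherited by $\Theta_{k-1}$. The first, that $I$ and $J$ anti-commute w.r.t. $\Theta_{k-1}$, i.e. $C_{\Theta_{k-1}}(I,J)=0$, follows from Theorem~\ref{new thma} (with $n=0$); recall that the purely tensorial anti-commutation $I\smc J+J\smc I=0$ is independent of the structure, so only the vanishing of the concomitant needs justification. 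The second, that $(\Theta_{k-1})_{\{J,\{I,J\}\}}=\lambda_{k-1}(\Theta_{k-1})_{J,J,I}$, is precisely Lemma~\ref{theta_n_JJI}(a).

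Once both hypotheses are verified, the Corollary yields $(\Theta_{k-1})_{I,J,J}=\alpha_{k-1}(\Theta_{k-1})_{J,J,I}$ with $\alpha_{k-1}=-\frac{\lambda_{k-1}+1}{3}$. Feeding in the induction hypothesis gives $(\Theta_{k-1})_{J,J,I}=\big((\Theta_{k-1})_{J,J}\big)_I=\eta_{k-1}(\Theta_{k-1})_I=\eta_{k-1}\Theta_k$, and since $(\Theta_{k-1})_{I,J,J}=(\Theta_k)_{J,J}$ I conclude $(\Theta_k)_{J,J}=\eta_k\Theta_k$ with $\eta_k=\alpha_{k-1}\eta_{k-1}$, closing the induction.

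The delicate point, and the place where the arithmetic hypothesis on $\lambda_0$ enters, is ensuring that Lemma~\ref{theta_n_JJI}(a) actually applies at each stage, i.e. that the recursively defined constants $\lambda_{k-1}$ remain well-defined real numbers. The recursion $\lambda_k=\frac{-3\lambda_{k-1}}{1+\lambda_{k-1}}$ breaks down exactly when $\lambda_{k-1}=-1$, and a short computation shows that $\lambda_{k-1}=-1$ corresponds precisely to $\lambda_0=\frac{4}{(-3)^{k}-1}$; excluding the set $\{\frac{4}{(-3)^m-1}:m\in\Nn\}$ from the admissible values of $\lambda_0$ is exactly what guarantees $\lambda_{k-1}\neq-1$, hence that $\alpha_{k-1}$ is a genuine real constant, for every $k$. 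Beyond this verification the argument is a direct concatenation of the lemmas already established, so I do not anticipate further obstacles.
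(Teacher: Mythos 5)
Your proof is correct. It runs on the same ingredients as the paper's own proof --- the identity $(\Theta_k)_{J,J}=(\Theta_{k-1})_{I,J,J}$, the vanishing $C_{\Theta_{k-1}}(I,J)=0$ supplied by Theorem~\ref{new thma}, and the propagated proportionality $(\Theta_{k-1})_{\{J,\{I,J\}\}}=\lambda_{k-1}(\Theta_{k-1})_{J,J,I}$ of Lemma~\ref{theta_n_JJI}(a) --- but it is organized differently, and the difference buys a simplification. The paper evaluates $(\Theta_{k-1})_{J,J,I}$ by telescoping all the way back to the undeformed structure via Lemma~\ref{theta_n_JJI}(b), which introduces the coefficient $\lambda_0/\lambda_{k-1}$ and therefore forces a case split: $\lambda_0\neq 0$ is treated as just described, while $\lambda_0=0$ needs the separate statement Lemma~\ref{theta_n_JJI}(c). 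You instead evaluate $(\Theta_{k-1})_{J,J,I}=\eta_{k-1}(\Theta_{k-1})_I=\eta_{k-1}\Theta_k$ directly from your induction hypothesis, so the inductive step $\eta_k=-\frac{1+\lambda_{k-1}}{3}\,\eta_{k-1}$ (the Corollary containing equation (\ref{alpha}), applied with base structure $\Theta_{k-1}$) works uniformly in $\lambda_0$: when $\lambda_0=0$ all $\lambda_k$ vanish and your recursion simply yields $\eta_k=(-\tfrac{1}{3})^k\eta$, which is the paper's second case at no extra cost. So your route needs neither part (b) nor part (c) of Lemma~\ref{theta_n_JJI}, nor any case distinction, and the constants agree with the paper's, since $\lambda_{k-1}/\lambda_k=-\frac{1+\lambda_{k-1}}{3}$ whenever $\lambda_0\neq 0$. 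Your closing verification --- that the excluded set $\left\{\tfrac{4}{(-3)^m-1}\,:\, m\in\Nn\right\}$ is precisely what keeps every $\lambda_{k-1}\neq -1$, so that the sequence $(\lambda_k)$ is well defined --- is correct, though strictly speaking it is already built into the hypotheses of Lemma~\ref{theta_n_JJI} (it is the arithmetic recorded in that lemma's footnote), so it serves as a consistency check rather than an additional required step.
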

\begin{proof}
We consider two cases, depending on the value of $\lambda_0$.
\begin{itemize}
\item[i)] Case $\lambda_0 \neq 0$.

From Theorem \ref{new thma}, we  have that $C_{\Theta_{k}}(I,J)=0$, for all $k \in \Nn$.
We compute\footnote{Notice that if $\lambda_0 \neq 0$ then $\lambda_k \neq 0, \forall k \in \mathbb{N}$.}, using Lemma \ref{lemJdeforming} and both statements of Lemma \ref{theta_n_JJI}:
\begin{align*}
(\Theta_k)_{J,J} =(\Theta_{k-1})_{I,J,J} &= -\frac{1}{3}((\Theta_{k-1})_{J,J,I}+ (\Theta_{k-1})_{\{J,\{I,J\}\}})\\
&= -\frac{1}{3}((\Theta_{k-1})_{J,J,I}+ \lambda_{k-1}(\Theta_{k-1})_{J,J,I})\\
&= -\frac{1+\lambda_{k-1}}{3}(\Theta_{k-1})_{J,J,I}\\
&= -\frac{(1+\lambda_{k-1})\lambda_0}{3 \lambda_{k-1}} \Theta_{J,J, {\scriptsize \underbrace{I, \dots, I}_{k}}}\\
&= \frac{\lambda_0}{\lambda_k}\Theta_{J,J, {\scriptsize \underbrace{I, \dots, I}_{k}}}.
\end{align*}

The tensor $J$ being deforming for $\Theta$, we have $\Theta_{J,J}= \eta \, \Theta$, for some $\eta \in \Rr$, and the last equality becomes
$$(\Theta_k)_{J,J}= \frac{\lambda_0}{\lambda_k}\, \eta\, \Theta_k,$$
which means that $J$ is a deforming tensor for $\Theta_k$.

\item[(ii)] Case $\lambda_0 = 0$.

If $J$ is deforming for $\Theta$, i.e., $\Theta_{J,J}=\eta \, \Theta$, with $\eta \in \Rr$, then, from Lemma~\ref{theta_n_JJI} c) we immediately get
$$(\Theta_k)_{J,J}=(-\frac{1}{3})^k \eta \, \Theta_k, \,\,\, \forall k\in \Nn,$$
which means that $J$ is deforming for $\Theta_k$.
\end{itemize}
\end{proof}

Now, we establish the main result of this section.

\begin{thm}  \label{oddpowerI}
Let $I$ and $J$ be two skew-symmetric $(1,1)$-tensors on a pre-Courant (respectively, Courant) algebroid $(E,\Theta)$ such that $ \Theta_{\{J, \{I,J\}\}}=\lambda_0 \Theta_{J,J,I}$, for some $\lambda_0 \in \Rr \backslash \{\frac{4}{(-3)^m -1}, m \in \Nn \}$. If $(J,I)$ is a deforming-Nijenhuis pair for $\Theta$, then $(J, I^{2n+1})$ is a deforming-Nijenhuis pair for the pre-Courant (respectively, Courant) structures $\Theta_k$, for all $k,n \in \Nn$.
\end{thm}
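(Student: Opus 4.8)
The plan is to unpack the conclusion into the three defining conditions of a deforming-Nijenhuis pair and verify each one separately for the pair $(J, I^{2n+1})$ relative to $\Theta_k$, drawing on the machinery already assembled. By definition, $(J, I^{2n+1})$ is a deforming-Nijenhuis pair for $\Theta_k$ precisely when (i) $(J, I^{2n+1})$ is a compatible pair w.r.t. $\Theta_k$, (ii) $J$ is deforming for $\Theta_k$, and (iii) $I^{2n+1}$ is Nijenhuis for $\Theta_k$.

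First I would dispose of the Nijenhuis condition (iii). Since $(J,I)$ is a deforming-Nijenhuis pair for $\Theta$, the tensor $I$ is skew-symmetric and Nijenhuis for $\Theta$; Proposition \ref{Nijenhuis_theta_m} then yields immediately that $I^{2n+1}$ is Nijenhuis for every $\Theta_k$, $k \in \Nn$. Next I would establish the deforming condition (ii). Because $I$ is Nijenhuis for $\Theta$ we have ${\mathcal T}_{\Theta}I = 0$, so in particular ${\mathcal T}_{\Theta}I(JX,Y) = {\mathcal T}_{\Theta}I(X,JY) = 0$ for all sections $X,Y$; together with the compatibility of $(I,J)$, the hypothesis $\Theta_{\{J,\{I,J\}\}} = \lambda_0 \Theta_{J,J,I}$ with $\lambda_0$ in the admissible range, and the fact that $J$ is deforming for $\Theta$, this is exactly the list of hypotheses of Proposition \ref{Jdeforming_n}. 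Invoking that proposition gives that $J$ is deforming for $\Theta_k$ for all $k \in \Nn$. This is the step carrying the real weight, since it is where the restriction on $\lambda_0$ is needed to keep the auxiliary constants $\lambda_k$ of Lemma \ref{theta_n_JJI} well defined; the whole delicate recursion is, however, already packaged inside Proposition \ref{Jdeforming_n}, so here it suffices to check the hypotheses transfer.

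Finally, for the compatibility condition (i), I would appeal to Theorem \ref{propconcgeral}, whose hypotheses ($I$ Nijenhuis and $(I,J)$ compatible w.r.t. $\Theta$) are built into the deforming-Nijenhuis assumption. In the notation of that theorem, setting its parameter $s$ equal to our $n$ and its parameter $n$ equal to $0$ (so that $I^n \smc J$ reduces to $J$) yields $C_{\Theta_k}(I^{2n+1}, J) = 0$ and that $(I^{2n+1}, J)$ is a compatible pair w.r.t. $\Theta_k$ for all $k, n \in \Nn$. As both the anti-commutation relation and the vanishing of $C_{\Theta_k}$ are symmetric in the two tensor arguments, it follows that $(J, I^{2n+1})$ is likewise a compatible pair w.r.t. $\Theta_k$.

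Combining (i)--(iii) shows that $(J, I^{2n+1})$ is a deforming-Nijenhuis pair for $\Theta_k$, for all $k,n \in \Nn$. For the parenthetical Courant case, I would observe that when $\Theta$ is a Courant structure and $I$ is a skew-symmetric Nijenhuis tensor, Proposition \ref{courant_hierar} guarantees that each $\Theta_k$ is again a genuine Courant structure, so the conclusion holds verbatim with pre-Courant replaced by Courant. The only point requiring attention is that the hypotheses indeed carry over to every $\Theta_k$, which is secured by Corollary \ref{Nijenhuis} and Theorem \ref{new thma}; past this bookkeeping there is no substantial computational obstacle, as the proof is essentially an assembly of three results proved earlier.
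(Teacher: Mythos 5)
Your proof is correct and follows essentially the same route as the paper's: both arguments assemble Proposition \ref{Nijenhuis_theta_m} (for $I^{2n+1}$ Nijenhuis for $\Theta_k$), Proposition \ref{Jdeforming_n} (for $J$ deforming for $\Theta_k$), and Theorem \ref{propconcgeral} with its inner index set to $0$ (for compatibility of the pair w.r.t.\ $\Theta_k$). Your explicit verification that the hypotheses of Proposition \ref{Jdeforming_n} transfer, and your remark on the Courant case via Proposition \ref{courant_hierar}, are consistent with (and slightly more detailed than) the paper's own write-up.
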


\begin{proof}
Let $(J,I)$ be a deforming-Nijenhuis pair for $\Theta$. Combining Corollary \ref{Nijenhuis}, Theorem \ref{new thma} and Proposition \ref{Jdeforming_n}, we have that $(J,I)$ is a deforming-Nijenhuis pair for $\Theta_k$, for all $k \in \Nn$. From Proposition \ref{Nijenhuis_theta_m}, we get that $I^{2n+1}$ is Nijenhuis for $\Theta_k$, for all $k,n \in \Nn$. Since $I$ and $J$ anti-commute, the tensors $I^{2n+1}$ and $J$ also anti-commute and, from Theorem \ref{propconcgeral}, we have that $C_{\Theta_{k}}(I^{2n+1}, J)=0$, for all $k,n \in \Nn$. Thus, $(J,I^{2n+1})$ is a deforming-Nijenhuis pair for $\Theta_k$, for all $k,n \in \Nn$.
\end{proof}



\subsection{Hierarchy of Poisson-Nijenhuis pairs for $\Theta_k$, $k \in \Nn$}

We introduce the notions of Poisson tensor, Poisson-Nijenhuis pair and compatible Poisson tensors for a pre-Courant algebroid $(E,\Theta)$ and construct a hierarchy of Poisson-Nijenhuis pairs.

\

We start by introducing the  notion of Poisson tensor.

\begin{defn}
A skew-symmetric $(1,1)$-tensor $J$ on a pre-Courant algebroid $(E,\Theta)$ satisfying $\Theta_{J,J}=0$ is said to be a {\em Poisson} tensor for $\Theta$.
\end{defn}

In the next example, we show that the previous definition extends the usual definition of a Poisson bivector on a Lie algebroid.

\begin{ex}  \label{examplePoisson}
Let $(A, \mu)$ be a Lie algebroid. Consider the Courant algebroid $(A\oplus A^*, \Theta=\mu)$ and the $(1,1)$-tensor  $J_\pi$ of Example \ref{example1} a). Then,  $J_\pi$ is a Poisson tensor for $\Theta=\mu$ if and only if $\pi$ is a Poisson tensor on $(A, \mu)$:
$$\Theta_{J_\pi,J_\pi}=0 \Leftrightarrow \{\pi, \{\pi, \mu\}\}=0 \Leftrightarrow [\pi,\pi]_\mu=0.$$
\vspace*{-1cm}
\flushright{$\diamondsuit$}
\end{ex}

\begin{ex}
The operators introduced in example \ref{ex:central} are Poisson operators on Lie algebras.
\end{ex}

The next theorem follows directly from Lemma~\ref{theta_n_JJI} c).
\begin{thm}\label{J_Poisson_for_Theta_n}
Let $I$ and $J$ be two skew-symmetric $(1,1)$-tensors on a pre-Courant algebroid $(E,\Theta)$ such that $(I,J)$ is a compatible pair w.r.t. $\Theta$,  $\Theta_{\{J,\{I,J\}\}}=0$ and ${\mathcal T}_\Theta I(JX,Y)={\mathcal T}_\Theta I (X,JY)=0$, for all sections $X$ and $Y$ of $E$. If $J$ is Poisson for $\Theta$, then $J$ is Poisson for $\Theta_k$, for all $k \in \Nn$.
\end{thm}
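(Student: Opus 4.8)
The plan is to deduce the statement as an immediate specialization of Lemma~\ref{theta_n_JJI}~(c). First I would observe that the hypothesis that $J$ is Poisson for $\Theta$, namely $\Theta_{J,J}=0$, forces $\Theta_{J,J,I}=\{I,\Theta_{J,J}\}=0$. Consequently the standing assumption $\Theta_{\{J,\{I,J\}\}}=0$ of the theorem is exactly the relation $\Theta_{\{J,\{I,J\}\}}=\lambda_0\,\Theta_{J,J,I}$ in the specific case $\lambda_0=0$. Since $0$ does not belong to the excluded set $\{\frac{4}{(-3)^m-1}, m\in\Nn\}$ (the equation $4/x=0$ has no solution), and the compatibility of $(I,J)$ together with the torsion conditions ${\mathcal T}_\Theta I(JX,Y)={\mathcal T}_\Theta I(X,JY)=0$ are already among the hypotheses, all the assumptions of Lemma~\ref{theta_n_JJI} are in force with $\lambda_0=0$.

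Next I would invoke part~(c) of that lemma, which in the case $\lambda_0=0$ reads
$$(\Theta_k)_{J,J}=\left(-\tfrac{1}{3}\right)^k \Theta_{J,J, {\scriptsize \underbrace{I, \dots, I}_{k}}},\qquad \forall\, k\in\Nn.$$
To conclude, I would note that the iterated expression $\Theta_{J,J, {\scriptsize \underbrace{I, \dots, I}_{k}}}$ is obtained from $\Theta_{J,J}$ by taking $k$ successive big brackets with $I$, so that the vanishing $\Theta_{J,J}=0$ propagates to $\Theta_{J,J, {\scriptsize \underbrace{I, \dots, I}_{k}}}=0$ for every $k\in\Nn$. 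Hence $(\Theta_k)_{J,J}=0$, which is precisely the statement that $J$ is Poisson for $\Theta_k$.

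Since the result is a direct corollary of Lemma~\ref{theta_n_JJI}~(c), there is no genuine computational obstacle; the only delicate point is the bookkeeping ensuring that the lemma is applicable. In particular I would verify that $\lambda_0=0$ is an admissible value (it is excluded from the forbidden set), and that the Poisson condition indeed yields $\Theta_{J,J,I}=0$, so that the hypothesis $\Theta_{\{J,\{I,J\}\}}=0$ matches the required proportionality with the specific constant $\lambda_0=0$ rather than with some nonzero $\lambda_0$.
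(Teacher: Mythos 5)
Your proof is correct and is exactly the argument the paper intends: the paper's entire proof is the remark that the theorem "follows directly from Lemma~\ref{theta_n_JJI} c)", and you have filled in precisely the bookkeeping that makes this work — the hypothesis $\Theta_{\{J,\{I,J\}\}}=0$ is the case $\lambda_0=0$ of the lemma (an admissible value), and $(\Theta_k)_{J,J}=(-\tfrac{1}{3})^k\,\Theta_{J,J,\underbrace{\scriptstyle I,\dots,I}_{k}}$ vanishes because $\Theta_{J,J}=0$ propagates through the $k$ successive brackets with $I$. (Your intermediate observation that $\Theta_{J,J,I}=0$ is not even needed for the matching, since $\lambda_0=0$ makes the proportionality hold trivially, but it is harmless.)
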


\

Requiring  $\Theta_{\{J,\{I,J\}\}}=0$ might seem a bit arbitrary, but it is not. In fact, in the case where $I$ and $J$ anti-commute, this condition may be interpreted as $ I \smc J^2 $ being a $\Theta$-cocycle. When $E=A \oplus A^*$, a $(1,1)$-tensor $J_\pi$ of the type considered in Example~\ref{example1} a) satisfies trivially this condition because $J_{\pi}^2=0$.

\

Now, we introduce the main notion of this section.

\begin{defn} \label{PNpair}
Let $I$ and $J$ be two skew-symmetric $(1,1)$-tensors on a pre-Courant algebroid $(E,\Theta)$. The pair $(J,I)$ is said to be a {\em Poisson-Nijenhuis pair} for $\Theta$ if
\begin{itemize}
\item $(J,I)$ is a compatible pair w.r.t. $\Theta$;
\item $J$ is Poisson for $\Theta$;
\item $I$ is Nijenhuis for $\Theta$.
\end{itemize}
\end{defn}

\begin{rem}
If $(J,I)$ is a Poisson-Nijenhuis pair for $\Theta$, then it is a deforming-Nijenhuis pair for $\Theta$.
\end{rem}

Recall that a Poisson-Nijenhuis structure on a Lie algebroid $(A, \mu)$ is a pair $(\pi, N)$, where $\pi$ is a Poisson bivector and $N:A \to A$ is a Nijenhuis tensor such that $N \pi^\#= \pi^\# N^*$ and $C_{\mu}(\pi,N)=0$.

The next example shows the relation between Definition \ref{PNpair} and the notion of Poisson-Nijenhuis structure on a Lie algebroid.
\begin{ex}  \label{examplePN}
Let $(\pi,N)$ be a Poisson-Nijenhuis structure on a Lie algebroid $(A, \mu)$, with $N^2= \alpha id_A$, $\lambda \in \Rr$. Consider the Courant algebroid $(E, \Theta)$, with $E=A\oplus A^*$ and $\Theta=\mu$,  $J_\pi$ and $I_N$  as in  Example \ref{example1} a) and d), respectively. Then, $(J_\pi,I_N)$ is a Poisson-Nijenhuis pair for $\Theta$.
In fact,  $N \pi^\#= \pi^\# N^* \Leftrightarrow I_N \smc J_\pi=- J_\pi \smc I_N$ and $C_{\mu}(\pi,N) = C_{\mu}(J_\pi,I_N)=0$, so that $(J_\pi,I_N)$ is a compatible pair w.r.t. $\mu$. Moreover, $\pi$ is a Poisson bivector on $(A, \mu)$ if and only if $J_\pi$ is Poisson for $\Theta=\mu$ (see Example \ref{examplePoisson}) and $I_N$ is Nijenhuis  for $\Theta=\mu$ (see Example \ref{example1} d)). The above arguments show that conversely, if $(J_\pi,I_N)$ is a Poisson-Nijenhuis pair for $\Theta=\mu$ with   $N^2= \alpha id_A$, then $(\pi,N)$ is a Poisson-Nijenhuis structure on  $(A, \mu)$.

\flushright{$\diamondsuit$}
\end{ex}

Now, we introduce the notion of compatible Poisson tensors.

\begin{defn}
Let $J$ and $J'$ be two Poisson tensors for the pre-Courant structure $\Theta$ on the vector bundle $(E, \langle.,.\rangle)$. The tensors $J$ and $J'$ are said to be {\em compatible} Poisson tensors for $\Theta$ if $J+J'$ is a Poisson tensor for $\Theta$, i.e, $\Theta_{J+J',J+J'}=0$.
\end{defn}

An immediate consequence of this definition is the following:

\begin{lem}
Let $J$ and $J'$ be two Poisson tensors for $\Theta$. Then,  $J$ and $J'$ are compatible Poisson tensors for $\Theta$ if and only if $\Theta_{J,J'}+\Theta_{J',J}=0$. In other words, $J$ and $J'$ are compatible Poisson tensors for $\Theta$ if and only if $J$ and $J'$ anti-commute w.r.t. $\Theta$.
\end{lem}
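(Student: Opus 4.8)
The plan is to unwind the definition of compatibility directly in the big bracket and reduce it to the concomitant $C_\Theta(J,J')$. First I would expand $\Theta_{J+J',J+J'}=\{J+J',\{J+J',\Theta\}\}$ using the $\Rr$-bilinearity of the big bracket in both of its slots. With the convention $\Theta_{A,B}=\{B,\{A,\Theta\}\}$ fixed above, this produces four terms,
\begin{equation*}
\Theta_{J+J',J+J'}=\Theta_{J,J}+\Theta_{J,J'}+\Theta_{J',J}+\Theta_{J',J'}.
\end{equation*}
A small point to watch is the order of the subscripts, so that the two mixed terms are correctly identified as $\Theta_{J,J'}$ and $\Theta_{J',J}$ rather than conflated.

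Next I would invoke the hypothesis that $J$ and $J'$ are each Poisson for $\Theta$, i.e.\ $\Theta_{J,J}=0$ and $\Theta_{J',J'}=0$. The expansion then collapses to
\begin{equation*}
\Theta_{J+J',J+J'}=\Theta_{J,J'}+\Theta_{J',J}=C_\Theta(J,J'),
\end{equation*}
the last equality being exactly the definition of the concomitant. Hence $J+J'$ is Poisson for $\Theta$ --- which is by definition the statement that $J$ and $J'$ are compatible Poisson tensors --- if and only if $\Theta_{J,J'}+\Theta_{J',J}=0$, proving the first assertion.

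For the second formulation I would simply quote the definition of anti-commuting with respect to $\Theta$: two skew-symmetric tensors $J,J'$ anti-commute w.r.t.\ $\Theta$ precisely when $C_\Theta(J,J')=0$, equivalently $\Theta_{J,J'}=-\Theta_{J',J}$. Since this is verbatim the condition just obtained, the two characterisations coincide and the proof is complete. I do not anticipate any genuine obstacle here: the whole argument is bilinear bookkeeping in the big bracket together with the two Poisson conditions, and the only thing demanding care is keeping the subscript order straight when reading off the concomitant.
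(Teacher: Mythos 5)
Your proof is correct and is exactly the argument the paper has in mind: the paper states this lemma as an ``immediate consequence'' of the definition of compatible Poisson tensors, and the intended reasoning is precisely your bilinear expansion of $\Theta_{J+J',J+J'}$, cancellation of $\Theta_{J,J}$ and $\Theta_{J',J'}$ via the Poisson hypotheses, and identification of the remaining sum with $C_\Theta(J,J')$. Nothing is missing, and your care with the subscript convention $\Theta_{A,B}=\{B,\{A,\Theta\}\}$ matches the paper's usage.
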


\begin{ex}
Let $(A, \mu)$ be a Lie algebroid, consider the Courant algebroid $(A\oplus A^*, \Theta=\mu)$ and take two Poisson tensors for $\Theta=\mu$, $J_\pi$ and $J_{\pi'}$, of the type considered in Example~\ref{example1} a). Then,
\begin{align*}
\Theta_{J_{\pi},J_{\pi'}}+\Theta_{J_{\pi'},J_{\pi}}=0  & \Leftrightarrow \{\pi', \{\pi, \mu\}\} + \{\pi, \{\pi', \mu\}\}=0\\
 & \Leftrightarrow 2 \{\pi', \{\pi, \mu\}\}=0\\ & \Leftrightarrow [\pi,\pi']_\mu=0,
\end{align*}
and we recover the notion of compatible Poisson tensors on a Lie algebroid.
\flushright{$\diamondsuit$}
\end{ex}

In order to construct a hierarchy of Poisson-Nijenhuis pairs, we need the next proposition.

\begin{prop}\label{prop_T(ThetaJ)I}
Let $I$ and $J$ be two anti-commuting skew-symmetric $(1,1)$-tensors on a pre-Courant algebroid $(E, \Theta)$. Then, for all sections $X$ and $Y$ of $E$,
\begin{equation}  \label{torsaoJI2}
{\mathcal T}_{\Theta_I}J(X,Y)=-J(C_\Theta (I,J)(X,Y))- {\mathcal T}_{\Theta}J(IX,Y)- {\mathcal T}_{\Theta}J(X,IY)-I({\mathcal T}_{\Theta}J(X,Y))
\end{equation}
and
\begin{equation}  \label{torsaoJI1}
{\mathcal T}_{\Theta_J}I(X,Y)=-I(C_\Theta (I,J)(X,Y))- {\mathcal T}_{\Theta}I(JX,Y)- {\mathcal T}_{\Theta}I(X,JY)-J({\mathcal T}_{\Theta}I(X,Y)).
\end{equation}
\end{prop}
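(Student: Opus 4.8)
The plan is to establish (\ref{torsaoJI2}) by a direct expansion in terms of the Dorfman bracket, entirely parallel to the computation in Proposition~\ref{torsions}, and then to deduce (\ref{torsaoJI1}) from it by the symmetry $I \leftrightarrow J$. Throughout I would use the anti-commutativity hypothesis in the two forms it takes: as an identity between composite images, $I(JX) = -J(IX)$ for every section $X$, and as the operator identity $I \smc J + J \smc I = 0$, the latter being needed to simplify the concomitant.

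First I would write out the left-hand side. By definition of the torsion of $J$ with respect to the deformed bracket $[.,.]_I$,
\begin{equation*}
{\mathcal T}_{\Theta_I}J(X,Y) = [JX,JY]_I - J[JX,Y]_I - J[X,JY]_I + J^2[X,Y]_I,
\end{equation*}
and I would expand each of the four deformed brackets by $[U,V]_I = [IU,V] + [U,IV] - I[U,V]$. Using $I(JX) = -J(IX)$ to rewrite every term in which $I$ has been applied to an image of $J$, this produces twelve terms in the undeformed bracket $[.,.]$.

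Next I would expand the right-hand side. The three torsion terms ${\mathcal T}_\Theta J(IX,Y)$, ${\mathcal T}_\Theta J(X,IY)$ and $I({\mathcal T}_\Theta J(X,Y))$ are expanded directly from the definition of ${\mathcal T}_\Theta J$, giving twelve further terms, while for the concomitant I would use that $I$ and $J$ anti-commute: by the remark following (\ref{torsion_sum}) one has $C_\Theta(I,J) = 2\,{\mathcal N}_\Theta(I,J)$, and in the eight-term expression (\ref{torsion_sum}) the two contributions $IJ[X,Y]$ and $JI[X,Y]$ cancel because $I \smc J + J \smc I = 0$. Hence $-J(C_\Theta(I,J)(X,Y))$ contributes six terms, each carrying a factor $2$. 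Collecting everything and matching term by term, the six doubled terms coming from the concomitant combine with the single copies produced by the torsion terms to reproduce exactly the twelve terms of the left-hand side; this is where the factor $2$ in $C_\Theta(I,J) = 2\,{\mathcal N}_\Theta(I,J)$ is essential. The main obstacle is purely the bookkeeping of these thirty-odd terms, together with the consistent use of $I(JX)=-J(IX)$ and $J^2 \smc I = I \smc J^2$ to bring every term to a common normal form.

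Finally, (\ref{torsaoJI1}) follows from (\ref{torsaoJI2}) by interchanging the roles of $I$ and $J$: the hypothesis that $I$ and $J$ anti-commute is symmetric in $I$ and $J$, and the concomitant is symmetric, $C_\Theta(I,J) = C_\Theta(J,I)$, by its definition (\ref{def_conc}); thus swapping $I$ and $J$ in (\ref{torsaoJI2}) turns ${\mathcal T}_{\Theta_I}J$ into ${\mathcal T}_{\Theta_J}I$ and yields precisely (\ref{torsaoJI1}).
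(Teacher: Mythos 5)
Your proposal is correct and takes essentially the same route as the paper: a direct expansion of ${\mathcal T}_{\Theta_I}J$ and of the six doubled terms of $C_\Theta(I,J)$ (the paper obtains these directly from $[X,Y]_{I,J}+[X,Y]_{J,I}$ rather than via $C_\Theta=2\,{\mathcal N}_\Theta$, which is the same thing under anti-commutativity), followed by term-by-term matching using $I\smc J=-J\smc I$, with the second identity deduced by exchanging the roles of $I$ and $J$.
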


\begin{proof}
Since the roles of $I$ and $J$ can be reversed, we only prove (\ref{torsaoJI2}). We compute ${\mathcal T}_{\Theta_I}J$ and $C_\Theta (I,J)$. For any sections $X,Y$ of $E$, we have
\[
\begin{array}{ll}
& {\mathcal T}_{\Theta_I}J(X,Y)= [JX,JY]_I-J[JX,Y]_I-J[X,JY]_I+J^2[X,Y]_I \\
&= [IJX, JY]+[JX,IJY]-I[JX,JY]
  - J[IJX,Y]-J[JX,IY]+JI[JX,Y] \\
& -J[IX,JY]-J[X,IJY]+JI[X,JY]
 + J^2 [IX,Y]+J^2 [X,IY]-J^2I[X,Y]
\end{array}
\]
and
\begin{equation*}
C_\Theta (I,J)(X,Y)= 2([JX,IY]+[IX,JY]-I([JX,Y]+[X,JY])-J([IX,Y]+[X,IY])).
\end{equation*}

Thus,
\[
\begin{array} {ll}
&{\mathcal T}_{\Theta_I}J(X,Y) +J(C_\Theta (I,J)(X,Y)) = -[JIX,JY]  -[JX,JIY]  -I[JX,JY] \\
& +J[JIX,Y]  +J[JX,IY]  +IJ[JX,Y] +J[IX,JY]  +J[X,JIY]  +IJ[X,JY] \\
& - J^2[IX,Y] -J^2[X,IY]  -IJ^2[X,Y] \\
& = - {\mathcal T}_{\Theta}J(IX,Y)  - {\mathcal T}_{\Theta}J(X,IY)  -I({\mathcal T}_{\Theta}J(X,Y)).
\end{array}
\]
\end{proof}

The next theorem defines a hierarchy of Poisson-Nijenhuis pairs.

\begin{thm}  \label{generalPNhierarchy}
Let $I$ and $J$ be two skew-symmetric $(1,1)$-tensors on a pre-Courant algebroid $(E,\Theta)$, such that $(J,I)$ is a Poisson-Nijenhuis pair for $\Theta$ and $\Theta_{\{J,\{I,J\}\}}=0$. Then,
 \begin{enumerate}
\item
  $I^n \smc J$ is a Poisson tensor for $\Theta_k$, for all $n, k \in \Nn$;
  \item
  $(I^n \smc J)_{n \in \Nn}$ is a hierarchy of pairwise compatible Poisson tensors for $\Theta_k$, for all $k \in \Nn$;
  \item
 $(I^n \smc J, I^{2m+1})$ is a Poisson-Nijenhuis pair for $\Theta_k$, for all $m,n,k \in \Nn$.
 \end{enumerate}
\end{thm}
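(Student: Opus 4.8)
The plan is to deduce everything from statement (1), since statements (2) and (3) reduce to it together with the compatibility results already established. I would begin by checking that the full hypothesis package transfers to every $\Theta_k$: by Corollary~\ref{Nijenhuis}, $I$ is Nijenhuis for $\Theta_k$; by Theorem~\ref{new thma} (with $n=0$, its torsion hypothesis being automatic since ${\mathcal T}_{\Theta}I=0$), $C_{\Theta_k}(I,J)=0$; by Theorem~\ref{J_Poisson_for_Theta_n}, $J$ is Poisson for $\Theta_k$; and since $\{J,\{I,J\}\}$ is a scalar multiple of $I\smc J^2$, the hypothesis $\Theta_{\{J,\{I,J\}\}}=0$ says that $I\smc J^2$ is a $\Theta$-cocycle, so Lemma~\ref{theta_cocycle}~ii) gives $(\Theta_k)_{I\smc J^2}=0$ for all $k$. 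Granting (1), statement (3) is then immediate: $I^{2m+1}$ is Nijenhuis for $\Theta_k$ by Proposition~\ref{Nijenhuis_theta_m}, the pair $(I^{2m+1},I^n\smc J)$ is compatible w.r.t. $\Theta_k$ by Theorem~\ref{propconcgeral}, and $I^n\smc J$ is Poisson by (1); the pairwise compatibility in (2) amounts to the anti-commutativity $C_{\Theta_k}(I^n\smc J,I^m\smc J)=0$.

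For (1) I would prove the single identity $(\Theta_k)_{I^a\smc J,\,I^b\smc J}=0$ for all $a,b,k\in\Nn$, which yields (1) on taking $a=b=n$ and the anti-commutativity needed in (2) by symmetrising. Both composite deformations are unfolded through Lemma~\ref{lem20}. The inner one unfolds over $\Theta_k$ directly, $(\Theta_k)_{I^a\smc J}=(\Theta_{k+a})_J$. To unfold the outer one I must check that Lemma~\ref{lem20} applies over $\Phi:=(\Theta_{k+a})_J$, i.e. that $I$ is Nijenhuis for $\Phi$ and $(I,J)$ is compatible w.r.t. $\Phi$. The first is formal: in formula (\ref{torsaoJI1}) of Proposition~\ref{prop_T(ThetaJ)I} every term on the right vanishes because ${\mathcal T}_{\Theta_{k+a}}I=0$ and $C_{\Theta_{k+a}}(I,J)=0$. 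Granting both, Lemma~\ref{lem20} gives $\Phi_{I^b\smc J}=(\Theta_{k+a})_{J,\underbrace{I,\dots,I}_{b},J}$, so the whole expression is reduced to one of the form $(\Theta_m)_{J,\underbrace{I,\dots,I}_{p},J}$.

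It remains to show $(\Theta_m)_{J,\underbrace{I,\dots,I}_{p},J}=0$ for all $m,p$, which I would do by induction on $p$. For $p=0$ it is $(\Theta_m)_{J,J}=0$, as $J$ is Poisson for $\Theta_m$. For the step, $C_{\Theta_m}(I,J)=0$ allows the leading pair of deformations to be flipped, $(\Theta_m)_{J,I}=-(\Theta_m)_{I,J}=-(\Theta_{m+1})_J$, turning the expression into $-(\Theta_{m+1})_{J,\underbrace{I,\dots,I}_{p-1},J}$, which vanishes by the inductive hypothesis. This proves (1), hence (2) and (3) as above.

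The one genuinely non-formal step, and the place I expect the difficulty to concentrate, is the compatibility $C_{\Phi}(I,J)=0$ for $\Phi=(\Theta_{k+a})_J$. I would obtain it from the $J$-analogue of Proposition~\ref{propconcThetaI} (swapping the roles of $I$ and $J$), namely $C_{(\Theta_m)_J}(I,J)=C_{\Theta_m}(J,\{I,J\})+\{J,C_{\Theta_m}(I,J)\}$: the second term vanishes, and, using that $J$ and $I\smc J$ anti-commute, Lemma~\ref{IoJ} rewrites the first in terms of $(\Theta_m)_{J,I\smc J}$ and $(\Theta_m)_{J\smc I\smc J}=-(\Theta_m)_{I\smc J^2}$. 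The latter is zero by the cocycle condition, while the former reduces, through the graded Jacobi identity (the $(\Theta_m)_{J,J}$-term dropping out) and formula (\ref{thetaJIJ}) of Lemma~\ref{lemJdeforming}, to the vanishing of $(\Theta_m)_{J,J,I}$, $(\Theta_m)_{\{J,\{I,J\}\}}$ and $\{J,C_{\Theta_m}(I,J)\}$, all of which were recorded above. This is exactly where the hypothesis $\Theta_{\{J,\{I,J\}\}}=0$ is indispensable.
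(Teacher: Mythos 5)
Your proposal is correct and follows essentially the same route as the paper: the paper also reduces everything to the identity $(\Theta_k)_{I^a\smc J,\,I^b\smc J}=0$, proved by a double application of Lemma~\ref{lem20} (once over $\Theta_k$, once over $(\Theta_{k+a})_J$) after establishing exactly your two transfer facts --- that $I$ is Nijenhuis for $(\Theta_{k+a})_J$ via (\ref{torsaoJI1}), and that $(I,J)$ is compatible w.r.t. $(\Theta_{k+a})_J$ via (\ref{thetaJIJ}), Lemma~\ref{theta_cocycle}(ii), Theorem~\ref{new thma} and Theorem~\ref{J_Poisson_for_Theta_n} --- followed by the same flip-and-vanish argument (your induction on $p$ is the paper's $(-1)^n$ sign computation) and the same deduction of (2) and (3). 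The only cosmetic difference is that you reach the compatibility transfer through the $J$-analogue of Proposition~\ref{propconcThetaI} and Lemma~\ref{IoJ}, whereas the paper expands $C_{(\Theta_k)_J}(I,J)=(\Theta_k)_{J,I,J}+(\Theta_k)_{J,J,I}$ directly; both computations rest on the same identities.
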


\

The proof of the above theorem needs two auxiliary lemmas.

\begin{lem}
Let $I$ and $J$ be two skew-symmetric $(1,1)$-tensors on a pre-Courant algebroid $(E,\Theta)$, such that $(I,J)$ is a compatible pair w.r.t. $\Theta$. If $I$ is Nijenhuis for $\Theta$, then $I$ is Nijenhuis for $(\Theta_k)_J$, for all $k \in \mathbb{N}$.
\end{lem}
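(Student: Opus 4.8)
The plan is to reduce the whole statement to the torsion formula of Proposition~\ref{prop_T(ThetaJ)I}, but applied not to $\Theta$ itself rather to each deformed structure $\Theta_k$, and then to observe that every term on the right-hand side vanishes. To set this up I would first collect three facts. \emph{(i)} Since $I$ is Nijenhuis for $\Theta$, Corollary~\ref{Nijenhuis} gives that $I$ is Nijenhuis for $\Theta_k$ for every $k \in \Nn$; that is, ${\mathcal T}_{\Theta_k}I = 0$ identically. \emph{(ii)} Because $I$ is Nijenhuis for $\Theta$ we have ${\mathcal T}_{\Theta}I = 0$, so the hypotheses ${\mathcal T}_{\Theta}I(JX,Y) = {\mathcal T}_{\Theta}I(X,JY) = 0$ of Theorem~\ref{new thma} hold trivially; together with the assumption that $(I,J)$ is a compatible pair w.r.t.\ $\Theta$, Theorem~\ref{new thma} taken with $n=0$ yields $C_{\Theta_k}(I,J) = 0$ for all $k \in \Nn$. \emph{(iii)} The anti-commutation $I \smc J + J \smc I = 0$ is part of $(I,J)$ being a compatible pair and is a purely algebraic condition on the endomorphisms, hence independent of the choice of (pre-)Courant structure.

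With these in hand, the anti-commutation in \emph{(iii)} legitimizes applying Proposition~\ref{prop_T(ThetaJ)I} with $\Theta$ replaced by the pre-Courant structure $\Theta_k$, giving, for all sections $X,Y$ of $E$,
$$
{\mathcal T}_{(\Theta_k)_J}I(X,Y) = -I\big(C_{\Theta_k}(I,J)(X,Y)\big) - {\mathcal T}_{\Theta_k}I(JX,Y) - {\mathcal T}_{\Theta_k}I(X,JY) - J\big({\mathcal T}_{\Theta_k}I(X,Y)\big).
$$
By \emph{(i)} the last three terms vanish, and by \emph{(ii)} the first term vanishes as well. Hence ${\mathcal T}_{(\Theta_k)_J}I = 0$, which is precisely the assertion that $I$ is Nijenhuis for $(\Theta_k)_J$, for every $k \in \Nn$.

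I expect no genuine computational obstacle here: once the correct earlier results are invoked, the argument is essentially bookkeeping. The only point that needs care is the justification that Proposition~\ref{prop_T(ThetaJ)I} applies verbatim to $\Theta_k$. For this one must recall that $\Theta_k$ is again a pre-Courant structure, being the deformation of $\Theta$ by the skew-symmetric tensor $I$ (so that ${\mathcal T}_{\Theta_k}I$ and $C_{\Theta_k}(I,J)$ are well defined), and that the anti-commutation hypothesis of the proposition is algebraic and therefore automatically preserved under this change of structure. This is the subtlest step of the plan, though it is a matter of verifying applicability rather than a real difficulty.
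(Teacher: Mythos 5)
Your proof is correct and follows exactly the paper's own argument: Corollary~\ref{Nijenhuis} for ${\mathcal T}_{\Theta_k}I=0$, Theorem~\ref{new thma} (with $n=0$) for $C_{\Theta_k}(I,J)=0$, and formula~(\ref{torsaoJI1}) of Proposition~\ref{prop_T(ThetaJ)I} applied to the pre-Courant structure $\Theta_k$. Your added care in checking that the torsion hypotheses of Theorem~\ref{new thma} hold trivially and that the anti-commutation hypothesis is structure-independent is sound, and merely makes explicit what the paper leaves implicit.
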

\begin{proof}
Fix $k \in \mathbb{N}$. From Corollary~\ref{Nijenhuis}, $I$ is Nijenhuis for $\Theta_k$. Also, applying Theorem~\ref{new thma}, we get $C_{\Theta_k}(I,J)=0$. Finally, using (\ref{torsaoJI1}) for the pre-Courant structure $\Theta_k$, we conclude that $I$ is Nijenhuis for $(\Theta_k)_J$.
\end{proof}

\begin{lem}
Let $I$ and $J$ be two skew-symmetric $(1,1)$-tensors on a pre-Courant algebroid $(E,\Theta)$ such that $J$ is Poisson for $\Theta$, $\Theta_{\{J,\{I,J\}\}}=0$ and ${\mathcal T}_\Theta I(JX,Y)={\mathcal T}_\Theta I (X,JY)=0$, for all sections $X$ and $Y$ of $E$. If $(I,J)$ is a compatible pair w.r.t. $\Theta$, then $(I,J)$ is a compatible pair w.r.t. $(\Theta_k)_J$, for all $k \in \mathbb{N}$.
\end{lem}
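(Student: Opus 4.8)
The plan is to reduce the statement to the single identity $C_{(\Theta_k)_J}(I,J)=0$ and then to unfold this concomitant in supergeometric terms, so that it becomes a combination of quantities already shown to vanish. Since $I$ and $J$ anti-commute as operators (a property of the tensors alone, independent of any structure, and already part of the hypothesis that $(I,J)$ is compatible w.r.t. $\Theta$), by Definition~\ref{def_compatible_pair} it remains only to prove that $I$ and $J$ anti-commute w.r.t. $(\Theta_k)_J$, that is, that the degree-$3$ function $C_{(\Theta_k)_J}(I,J)$ vanishes. First I would expand this concomitant: writing $(\Theta_k)_J=\{J,\Theta_k\}$ and using the definition (\ref{def_conc}), a direct substitution gives
\begin{equation*}
C_{(\Theta_k)_J}(I,J)=(\Theta_k)_{J,I,J}+(\Theta_k)_{J,J,I},
\end{equation*}
so the proof splits into showing that each of the two summands vanishes.

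For the second summand, I would invoke Theorem~\ref{J_Poisson_for_Theta_n}, whose hypotheses coincide exactly with those of the present lemma; it yields that $J$ is Poisson for $\Theta_k$, i.e. $(\Theta_k)_{J,J}=0$, and deforming once more by $I$ gives $(\Theta_k)_{J,J,I}=\{I,(\Theta_k)_{J,J}\}=0$. For the first summand, the key observation is that the purely algebraic identity (\ref{thetaJIJ}) of Lemma~\ref{lemJdeforming}, being a consequence of the graded Jacobi identity, holds for \emph{every} pre-Courant structure, in particular for $\Theta_k$, giving
\begin{equation*}
(\Theta_k)_{J,I,J}=\frac{1}{3}\left((\Theta_k)_{J,J,I}+(\Theta_k)_{\{J,\{I,J\}\}}+\{J,C_{\Theta_k}(I,J)\}\right).
\end{equation*}
I would then dispose of the three terms on the right one by one: the first is $0$ as just noted; the second vanishes because $\Theta_{\{J,\{I,J\}\}}=0$ says that $\{J,\{I,J\}\}$ is a $\Theta$-cocycle, whence by Lemma~\ref{theta_cocycle}~ii) it is a $\Theta_k$-cocycle, so $(\Theta_k)_{\{J,\{I,J\}\}}=0$; and the third vanishes because Theorem~\ref{new thma} (taken with $n=0$) gives $C_{\Theta_k}(I,J)=0$, hence $\{J,C_{\Theta_k}(I,J)\}=0$. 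Therefore $(\Theta_k)_{J,I,J}=0$, and combining with the second summand yields $C_{(\Theta_k)_J}(I,J)=0$, which completes the argument.

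The computations are all routine applications of the graded Jacobi identity; the only real care needed is the bookkeeping of the nested deformation symbols. The main (minor) obstacle is therefore not analytic but organisational: one must verify that the hypotheses of each auxiliary result (Theorem~\ref{J_Poisson_for_Theta_n}, Lemma~\ref{lemJdeforming}, Lemma~\ref{theta_cocycle}~ii) and Theorem~\ref{new thma}) are genuinely available at the level of $\Theta_k$ and not merely at the level of $\Theta$, so that the three vanishing statements above can be cited legitimately. Once these citations are lined up — in particular once one recognises that the present lemma's hypotheses are exactly those guaranteeing that $J$ stays Poisson and that $(I,J)$ stays compatible along the whole family $(\Theta_k)_{k\in\Nn}$ — the conclusion is immediate.
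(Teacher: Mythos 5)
Your proof is correct and takes essentially the same route as the paper's: both expand $C_{(\Theta_k)_J}(I,J)=(\Theta_k)_{J,I,J}+(\Theta_k)_{J,J,I}$, apply formula (\ref{thetaJIJ}) at the level of $\Theta_k$, and dispose of the resulting terms via Theorem \ref{new thma}, Lemma \ref{theta_cocycle} ii) and Theorem \ref{J_Poisson_for_Theta_n}. The only (immaterial) difference is organisational: you show each of the two summands vanishes separately, whereas the paper first reduces the concomitant to $\frac{4}{3}(\Theta_k)_{J,J,I}$ and only then invokes Theorem \ref{J_Poisson_for_Theta_n}.
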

\begin{proof}
Fix $k \in \mathbb{N}$. By definition, $C_{(\Theta_k)_J}(I,J)=(\Theta_k)_{J,I,J} + (\Theta_k)_{J,J,I}$. In order to compute $(\Theta_k)_{J,I,J}$, remember formula~(\ref{thetaJIJ}) for the pre-Courant structure $\Theta_k$:
$$(\Theta_k)_{J,I,J}= \frac{1}{3} \left( (\Theta_k)_{J,J,I}+ (\Theta_k)_{\{J,\{I,J\}\}}+ \{J, C_{\Theta_k} (I,J)\} \right).$$
Since $(I,J)$ is a compatible pair w.r.t. $\Theta$, applying Theorem~\ref{new thma}, we get $C_{\Theta_k}(I,J)=0$. Furthermore, from Lemma~\ref{theta_cocycle}(ii), we have $\left(\Theta_k\right)_{\{J,\{I,J\}\}}=0$. Then, the formula above turns into
$(\Theta_k)_{J,I,J}= \frac{1}{3} (\Theta_k)_{J,J,I},$
so that
$C_{(\Theta_k)_J}(I,J)=\frac{4}{3} (\Theta_k)_{J,J,I}.$

Now, using Theorem~\ref{J_Poisson_for_Theta_n}, we get $(\Theta_k)_{J,J,I}=0$. Therefore, $(I,J)$ is a compatible pair w.r.t. $(\Theta_k)_J$.
\end{proof}

We address now the proof of the above theorem.
\begin{proof}[Proof of Theorem~\ref{generalPNhierarchy}]
Let $(I,J)$ be a Poisson-Nijenhuis pair for $\Theta$ such that $\Theta_{\{J,\{I,J\}\}}=0$. We start by proving that
\begin{equation} \label{compatible Poisson}
(\Theta_k)_{I^m\smc J, I^n\smc J}=0,
\end{equation}
for all $m,n,k \in \Nn$.
From the above auxiliary lemmas, $(I,J)$ is a compatible pair w.r.t. $(\Theta_{k+m})_J$ and $I$ is Nijenhuis for $(\Theta_{k+m})_J$. Then, using Lemma~\ref{lem20} for the pre-Courant structure $(\Theta_{k+m})_J$, we obtain
\begin{align*}
(\Theta_k)_{I^m\smc J, I^n\smc J} &=\left((\Theta_{k+m})_J\right)_{I^n\smc J}
    =\left((\Theta_{k+m})_J\right)_{{\scriptsize \underbrace{I, \dots,I}_{n}},J} \nonumber \\
   & =\Theta_{{\scriptsize \underbrace{I, \dots,I}_{k+m}},J,{\scriptsize \underbrace{I, \dots,I}_{n}},J}=(-1)^n\,\Theta_{{\scriptsize \underbrace{I, \dots,I}_{k+m+n}},J,J},
\end{align*}
where in the last equality  we used $n$ times $C_{\Theta_{s}}(I,J)=0$, for all $s \in \Nn$ (see Theorem~\ref{new thma}).
Using Theorem~\ref{J_Poisson_for_Theta_n}, we obtain (\ref{compatible Poisson}), from where statements (1) and (2) follow.
From Theorem~\ref{propconcgeral}, $(I^n \smc J, I^{2m+1})$ is a compatible pair w.r.t. $\Theta_k$ and, from Proposition~\ref{Nijenhuis_theta_m}, $I^{2m+1}$ is Nijenhuis for $\Theta_k$. Combining this with statement (1), we get statement (3).
\end{proof}

Using the Poisson-Nijenhuis pair arising from a Poisson-Nijenhuis structure as in Example~\ref{examplePN}, we recover most of the hierarchy already studied by \cite{magriYKS}, up to a minor difference. In this general setting it is not possible to consider $I^{2n}$ since it is not a skew-symmetric $(1,1)$-tensor.

\

To conclude this section, we come back to the deforming-Nijenhuis pairs to discuss a particular case.

\begin{prop}
Let $I$ and $J$ be two skew-symmetric $(1,1)$-tensors on a pre-Courant algebroid $(E,\Theta)$, such that  $I^2=\alpha \, id_E$ and $\Theta_{\{J, \{I,J\}\}}=\lambda_0 \Theta_{J,J,I}$, for some $\alpha, \lambda_0 \in \Rr$. If $(J,I)$ is a deforming-Nijenhuis pair for $\Theta$, then $(I^n \smc J,I)$ is a deforming-Nijenhuis pair for $\Theta$, for all $n \in \Nn$.
\end{prop}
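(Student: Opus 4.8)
The plan is to verify, one at a time, the three defining conditions of a deforming-Nijenhuis pair for $(I^n\smc J, I)$. That $I$ is Nijenhuis for $\Theta$ is part of the hypothesis, so only compatibility of the pair and the deforming property of $I^n\smc J$ remain. Before attacking these, I would exploit $I^2=\alpha\,id_E$ to collapse the powers of $I$: since $I$ and $J$ anti-commute one checks directly that $I^n\smc J$ is skew-symmetric and that $I^n\smc J$ equals $\alpha^{\lfloor n/2\rfloor}J$ when $n$ is even and $\alpha^{(n-1)/2}(I\smc J)$ when $n$ is odd. Thus $I^n\smc J$ is always a scalar multiple of either $J$ or $I\smc J$. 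Because scalar multiples of a deforming tensor are again deforming and preserve both anti-commutation and the vanishing of the concomitant, the whole statement reduces to the two cases $n=0$ (which is the hypothesis) and $n=1$.

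For the compatibility of $(I^n\smc J, I)$, the anti-commutation $(I^n\smc J)\smc I + I\smc(I^n\smc J)=0$ follows from $I\smc J=-J\smc I$ by moving $I^n$ past $J$, and $C_\Theta(I, I^n\smc J)=0$ is exactly Theorem~\ref{propconcIn} (its torsion hypothesis is automatic, since $I$ is Nijenhuis). By the symmetry of the concomitant this gives $C_\Theta(I^n\smc J, I)=0$, so $(I^n\smc J, I)$ is a compatible pair w.r.t.\ $\Theta$ for every $n$.

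The crux is to show that $I\smc J$ is deforming, i.e.\ $\Theta_{I\smc J, I\smc J}=\eta''\Theta$ for some $\eta''\in\Rr$ (the even case is covered by $J$ being deforming). Writing $I\smc J=-\tfrac12\{I,J\}$ and using Lemma~\ref{IoJ} together with $C_\Theta(I,J)=0$, one first gets $\Theta_{I\smc J}=\Theta_{I,J}$ and then, by the Jacobi identity, $\Theta_{I\smc J, I\smc J}=-\tfrac12\bigl(\Theta_{I,J,J,I}-\Theta_{I,J,I,J}\bigr)$. I would evaluate the two four-fold deformations separately. From Theorem~\ref{new thma} we have $C_{\Theta_I}(I,J)=0$, which combined with $\Theta_{I,I}=\alpha\Theta$ (this is (\ref{supergeometric_torsion}), valid since $I$ is Nijenhuis and $I^2=\alpha\,id_E$) yields $\Theta_{I,J,I}=-\alpha\Theta_J$ and hence $\Theta_{I,J,I,J}=-\alpha\eta\Theta$, where $\eta$ is the deforming constant of $J$. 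For the other term, formula (\ref{thetaIJJ}) of Lemma~\ref{lemJdeforming}, together with $C_\Theta(I,J)=0$ and the hypothesis $\Theta_{\{J,\{I,J\}\}}=\lambda_0\Theta_{J,J,I}$, gives $\Theta_{I,J,J}=-\tfrac{1+\lambda_0}{3}\Theta_{J,J,I}=-\tfrac{(1+\lambda_0)\eta}{3}\Theta_I$, whence $\Theta_{I,J,J,I}=-\tfrac{(1+\lambda_0)\alpha\eta}{3}\Theta$. Substituting both into the expression above produces $\Theta_{I\smc J, I\smc J}=-\tfrac{(2-\lambda_0)\alpha\eta}{6}\,\Theta$, so $I\smc J$ is deforming and the proof is complete.

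The main obstacle is organizing the four-fold-deformation computation: one must recognize that the two non-adjacent occurrences of $I$ in $\Theta_{I,J,J,I}$ and $\Theta_{I,J,I,J}$ can be brought together and resolved using the vanishing concomitants $C_\Theta(I,J)=C_{\Theta_I}(I,J)=0$ and the identity $\Theta_{I,I}=\alpha\Theta$, while the genuinely asymmetric term $\Theta_{I,J,J}$ is controlled precisely by the hypothesis linking $\Theta_{\{J,\{I,J\}\}}$ to $\Theta_{J,J,I}$. It is worth noting that this direct route needs no restriction on $\lambda_0$, because it bypasses the recursive constants $\lambda_k$ of Lemma~\ref{theta_n_JJI}.
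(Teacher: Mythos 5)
Your proof is correct, and its skeleton coincides with the paper's: reduce to $I\smc J$ via $I^2=\alpha\,id_E$, write $\Theta_{I\smc J,I\smc J}=\tfrac12\left(\Theta_{I,J,I,J}-\Theta_{I,J,J,I}\right)$ using Lemma~\ref{IoJ} and the Jacobi identity, treat $\Theta_{I,J,J,I}$ through (\ref{thetaIJJ}), and obtain compatibility of the pair from Theorem~\ref{propconcIn}. Where you genuinely diverge is the evaluation of $\Theta_{I,J,I,J}$: the paper applies (\ref{thetaJIJ}) to the deformed structure $\Theta_I$ and then Lemma~\ref{theta_cocycle}, arriving at $\Theta_{I\smc J,I\smc J}=\frac{2+5\lambda_0}{18}\,\eta\alpha\,\Theta$, whereas you use $C_{\Theta_I}(I,J)=0$ (Theorem~\ref{new thma}) together with $\Theta_{I,I}=\alpha\,\Theta$ to get $\Theta_{I,J,I,J}=-\alpha\eta\,\Theta$ outright, ending with $\Theta_{I\smc J,I\smc J}=\frac{\lambda_0-2}{6}\,\eta\alpha\,\Theta$. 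Your route is shorter and exploits $I^2=\alpha\,id_E$ from the start, where the paper invokes it only at the very end.

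The two constants disagree for generic $\lambda_0$, which looks alarming but is not an error on either side: both derivations are valid, so the hypotheses themselves are over-determined. Indeed, combining your identity $\Theta_{I,J,I,J}=-\Theta_{I,I,J,J}$ with the Jacobi-identity expansion $\Theta_{I,\{J,\{I,J\}\}}=2\Theta_{I,J,I,J}-\Theta_{I,I,J,J}-\Theta_{I,J,J,I}$ and the values $\Theta_{I,I,J,J}=\alpha\eta\,\Theta$, $\Theta_{I,\{J,\{I,J\}\}}=\lambda_0\alpha\eta\,\Theta$, $\Theta_{I,J,J,I}=-\frac{1+\lambda_0}{3}\alpha\eta\,\Theta$ forces $(\lambda_0+4)\,\alpha\eta\,\Theta=0$. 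Thus either $\alpha\eta\,\Theta=0$ (e.g.\ the Poisson case $\eta=0$, as for $J_\pi$ of Example~\ref{example1}), in which case both formulas give $\Theta_{I\smc J,I\smc J}=0$, or $\lambda_0=-4$, in which case both constants equal $-\alpha\eta$. Since the deforming property only requires \emph{some} constant, both proofs are sound; yours has the side benefit of exposing this hidden constraint. One small correction to your closing remark: the paper's proof of this proposition also imposes no restriction on $\lambda_0$ and never uses Lemma~\ref{theta_n_JJI}; the excluded values $\frac{4}{(-3)^m-1}$ only enter the results on the hierarchy $(\Theta_k)_{k\in\Nn}$.
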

\begin{proof}
Let $(J,I)$ be a deforming-Nijenhuis pair for $\Theta$. First, we prove that $I^n \smc J$ is deforming for $\Theta$.
Since $I^2=\alpha \, id_E$, $I^n \smc J$ is proportional either to $J$ or to $I \smc J$. So, we only need to prove that $I \smc J$ is deforming for $\Theta$. Using Lemma \ref{IoJ} and because $I$ and $J$ anti-commute, we have
\begin{equation*}
    \Theta_{I \smc J, I \smc J}= \Theta_{I, J, I \smc J}=\frac{1}{2} \Theta_{I, J, \{J,I\}}
    =\frac{1}{2} \left(\Theta_{I,J,I,J}-\Theta_{I,J,J,I}\right),
\end{equation*}
where in the last equality we used the Jacobi identity. Using (\ref{thetaJIJ}) for $\Theta_I$ and Lemma \ref{theta_cocycle}, we get
\begin{equation*}
    2 \Theta_{I \smc J, I \smc J}=\frac{1}{3} \left(\Theta_{I,J,J,I} + \Theta_{I,\{J, \{I,J\}\}}\right)-\Theta_{I,J,J,I}
    =-\frac{2}{3} \Theta_{I,J,J,I} + \frac{1}{3} \Theta_{\{J, \{I,J\}\},I}.
\end{equation*}
Now, from the equality (\ref{thetaIJJ}), we obtain
$$2 \Theta_{I \smc J, I \smc J}=\frac{2}{9} \Theta_{J,J,I,I} + \frac{5}{9} \Theta_{\{J, \{I,J\}\},I}.$$
Since $\Theta_{\{J, \{I,J\}\}}=\lambda_0 \Theta_{J,J,I}$ and $\Theta_{J,J}=\eta\,  \Theta$, for some $\eta \in \Rr$, we get
$$\Theta_{I \smc J, I \smc J}=\frac{2+5\lambda_0}{18}\eta\, \Theta_{I,I}=\frac{2+5\lambda_0}{18}\eta \, \Theta_{I^2}=\frac{2+5\lambda_0}{18}\eta \alpha\, \Theta,$$
where, in the last equalities, we used the fact that $I$ is Nijenhuis and satisfies $I^2=\alpha \, id_E$. Therefore, $I \smc J$ is deforming for $\Theta$.

The tensors $I$ and $I^n \smc J$ anti-commute and, from Theorem~\ref{propconcIn}, $C_\Theta(I,I^n \smc J)=0$. Thus, $(I^n \smc J,I)$ is a deforming-Nijenhuis pair for $\Theta$.
\end{proof}

\

Notice that $(I^n \smc J,I)_{n \in \Nn}$ is a very poor hierarchy of deforming-Nijenhuis pairs since, as we already mentioned,  all the pairs are of type either  $(J,I)$ or  $(I \smc J,I)$. In fact
we have, for all $n \in \Nn$,
 $$I^{2n} \smc J= \alpha^n J, \quad I^{2n+1} \smc J= \alpha^n I \smc J.$$


\section{Hierarchies of Nijenhuis pairs}

The last part of this article is devoted to the study of pairs of Nijenhuis tensors on pre-Courant algebroids.

\subsection{Nijenhuis pair for a hierarchy of pre-Courant structures}
We introduce the notion of Nijenhuis pair for a pre-Courant algebroid $\Theta$ and prove that a Nijenhuis pair $(I,J)$ for  $\Theta$ is still a Nijenhuis pair for any deformation of $\Theta$, either by $I$ or $J$.

We start by introducing the notion of Nijenhuis pair for a pre-Courant algebroid.

\begin{defn}
Let $I$ and $J$ be two skew-symmetric tensors on  a pre-Courant algebroid $(E, \Theta)$.
The pair $(I,J)$ is called a {\em Nijenhuis pair } for $\Theta$, if it is a compatible pair w.r.t. $\Theta$ and $I$ and $J$ are both Nijenhuis for $\Theta$.
\end{defn}

\begin{ex}
Let $J$ be a deforming tensor on $(E,\Theta)$, i.e. $\Theta_{J,J}=\eta \, \Theta$, for some $\eta \in \Rr$. If $(J,I)$ is a deforming-Nijenhuis pair, with $J^2 = \eta \, id_E$,  then $(J,I)$ is a Nijenhuis pair. In particular, if $(J,I)$ is Poisson-Nijenhuis pair, and $J^2=0$, then $(J,I)$ is a Nijenhuis pair. Notice that this happens when $J=J_\pi$ as in Example~\ref{example1}.
\flushright{$\diamondsuit$}
\end{ex}

In the next proposition we compute the torsion of the composition $I \smc J$.

\begin{prop}  \label{torsionIoJ}
    Let $I$ and $J$ be two anti-commuting tensors on a pre-Courant algebroid $(E, \Theta)$. Then, for all sections $X$ and $Y$ of $E$,
    \begin{multline}\label{torsion_of_composition}
        2 {\mathcal T}_{\Theta}(I \smc J)(X,Y) = \bigg({\mathcal T}_{\Theta}I(JX, JY) - J\left({\mathcal T}_{\Theta}I(JX,Y) + {\mathcal T}_{\Theta}I(X,JY)\right) -\\- J^2({\mathcal T}_{\Theta}I(X,Y)) \bigg) + \underset{I,J}{\circlearrowleft},
    \end{multline}
 where $ \underset{I,J}{\circlearrowleft}$ stands for permutation of $I$ and $J$.
\end{prop}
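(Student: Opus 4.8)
The plan is to prove the identity by expanding both sides directly from the definition of the Nijenhuis torsion, ${\mathcal T}_\Theta K(X,Y)=[KX,KY]-K([KX,Y]+[X,KY]-K[X,Y])$, and then collapsing everything by means of the single hypothesis $I\smc J=-J\smc I$. It is worth noting in advance that skew-symmetry of $I$ or $J$ is never used: only the bilinearity of the bracket and the operator identities forced by anti-commutativity enter, which is why the statement is phrased for arbitrary anti-commuting tensors.

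First I would expand the left-hand side by taking $K=I\smc J$ in the definition above. This produces four terms, the only subtlety being the composition $(I\smc J)\smc(I\smc J)=I\smc(J\smc I)\smc J=-\,I^2\smc J^2$, where anti-commutativity is used once. I would also record the companion operator identities $J^2\smc I=I\smc J^2$, $I^2\smc J=J\smc I^2$ and $I^2\smc J^2=J^2\smc I^2$, all immediate from $I\smc J=-J\smc I$; these will be needed to put the right-hand side into the same normal form.

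Next I would expand the right-hand side. Writing out each torsion ${\mathcal T}_\Theta I$ and ${\mathcal T}_\Theta J$ occurring in the two blocks --- the block displayed in the statement and its image under $\underset{I,J}{\circlearrowleft}$ --- yields thirty-two bracket monomials. The decisive manipulation is to normalise every occurrence of $J\smc I$, whether it sits as a leading operator or inside a bracket argument (using $[(J\smc I)X,\,\cdot\,]=-[(I\smc J)X,\,\cdot\,]$ together with the identities above), so that all thirty-two monomials are written in terms of $I\smc J$, $I^2\smc J$, $I\smc J^2$ and $I^2\smc J^2$ only. Pairing each monomial of the first block with its counterpart in the second block, twelve pairs have opposite signs and cancel, while the remaining four pairs reinforce; the four doubled survivors are precisely $2[(I\smc J)X,(I\smc J)Y]$, $-2(I\smc J)[(I\smc J)X,Y]$, $-2(I\smc J)[X,(I\smc J)Y]$ and $-2\,I^2\smc J^2[X,Y]$, i.e. exactly the expansion of the left-hand side.

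The computation is entirely in the spirit of the proofs of Lemma~\ref{formula torsion_sum} and Proposition~\ref{prop_T(ThetaJ)I}, and the only genuine difficulty is the bookkeeping: a single sign error while pushing $J\smc I$ through to $-I\smc J$ --- either at the operator level or inside the bracket slots --- would destroy the pairwise cancellation. I would therefore organise the thirty-two terms by their bracket pattern (collecting, say, all terms of the shape $I[(I\smc J)X,JY]$ together before simplifying), which makes the twelve cancellations and the four reinforcements transparent and isolates the four surviving terms.
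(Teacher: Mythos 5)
Your proposal is correct and follows essentially the same route as the paper's proof: both expand the right-hand side into the $16+16$ bracket monomials, use anti-commutativity of $I$ and $J$ to normalise signs, and observe that under the $I\leftrightarrow J$ swap twelve pairs cancel while the four ``diagonal'' pairs double to give exactly $2\,{\mathcal T}_{\Theta}(I\smc J)(X,Y)$. The paper merely packages the same bookkeeping in two $4\times 4$ matrices $M(I,J)$ and $M(J,I)$ with the relation $M(J,I)_{m,n}=-M(I,J)_{n,m}$ off the diagonal and $M(J,I)_{m,m}=M(I,J)_{m,m}$, which is your pairing argument in matrix form.
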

\begin{proof}
Let us compute the first four terms of the right hand side of equation (\ref{torsion_of_composition}):
\begin{align*}
    &\phantom{--}{\mathcal T}_{\Theta}I(JX,JY)&=& \phantom{--}[IJX,IJY] &-& I[IJX,JY] &-& I[JX,IJY] &+& I^2[JX,JY]\\
    &-J\left({\mathcal T}_{\Theta}I(JX,Y)\right)&=& -J[IJX,IY] &+& JI[IJX,Y] &+& JI[JX,IY] &-& JI^2[JX,Y]\\
    &-J\left({\mathcal T}_{\Theta}I(X,JY)\right)&=& -J[IX,IJY] &+& JI[IX,JY] &+& JI[X,IJY] &-& JI^2[X,JY]\\
    &-J^2\left({\mathcal T}_{\Theta}I(X,Y)\right)&=& -J^2[IX,IY] &+& J^2I[IX,Y] &+& J^2I[X,IY] &-& J^2I^2[X,Y].\\
\end{align*}
The terms appearing on the right hand sides of the above equalities can be addressed in a matrix form:
$$M(I,J)(X,Y)=
\left[\begin{array}{cccc}
    [IJX,IJY] &- I[IJX,JY] &- I[JX,IJY] & I^2[JX,JY]\\
    -J[IJX,IY] & JI[IJX,Y] & JI[JX,IY] &- JI^2[JX,Y]\\
    -J[IX,IJY] & JI[IX,JY] & JI[X,IJY] &- JI^2[X,JY]\\
    -J^2[IX,IY] & J^2I[IX,Y] & J^2I[X,IY] &- J^2I^2[X,Y]\\
\end{array}\right].
$$
Because $I$ and $J$ anti-commute, intertwining the tensors $I$ and $J$, we obtain the matrix $M(J,I)$ with entries given by
$$M(J,I)_{m,n}=\left\{
  \begin{array}{ll}
    -M(I,J)_{n,m},&\textrm{ if } m\neq n\\
    M(I,J)_{m,n},&\textrm{ if } m=n
  \end{array}
\right.$$
for all $m,n=1,\ldots, 4$.

Note that the right hand side of equation (\ref{torsion_of_composition}) is the sum of all the entries of both matrices $M(I,J)(X,Y)$ and $M(J,I)(X,Y)$. Thus,
\begin{align*}
    {\mathcal T}_{\Theta}I(JX, JY) &- J\left({\mathcal T}_{\Theta}I(JX,Y) + {\mathcal T}_{\Theta}I(X,JY)\right) - J^2({\mathcal T}_{\Theta}I(X,Y)) + \underset{I,J}{\circlearrowleft}=\\
    &=2\left([IJX,IJY] + JI[IJX,Y] + JI[X,IJY] - J^2I^2[X,Y]\right)\\
    &=2\left([IJX,IJY] - IJ[IJX,Y] - IJ[X,IJY] + (IJ)^2[X,Y]\right)\\
    &=2 {\mathcal T}_{\Theta}(I \smc J)(X,Y),
\end{align*}
and the proof is complete.
\end{proof}

\begin{prop}\label{Prop:arealsonijenhuis}
Let $I$ and $J$ be two skew-symmetric tensors on  a pre-Courant algebroid $(E, \Theta)$. If $(I,J)$ is a Nijenhuis pair for $\Theta$, then $(I, I \smc J)$ and $(J,I \smc J)$ are also Nijenhuis pairs for $\Theta$.
\end{prop}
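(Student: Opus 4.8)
The plan is to verify, for each of the two candidate pairs, the three defining properties of a Nijenhuis pair: compatibility w.r.t. $\Theta$, together with the Nijenhuis condition for each of the two constituent tensors. Two of these facts—that $I$ and $J$ are Nijenhuis for $\Theta$—are already part of the hypothesis, so the only genuinely new statements to establish are (i) the compatibility w.r.t. $\Theta$ of $(I, I\smc J)$ and of $(J, I\smc J)$, and (ii) that the composition $I\smc J$ is itself Nijenhuis for $\Theta$.

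For the compatibility statements I would invoke Theorem~\ref{propconcgeral}. Since $(I,J)$ is a Nijenhuis pair, $(I,J)$ is in particular a compatible pair w.r.t. $\Theta$ with $I$ Nijenhuis, so the hypotheses of that theorem hold; taking $k=s=0$ and $n=1$ yields immediately that $(I, I\smc J)$ is a compatible pair w.r.t. $\Theta$. For the second pair I would exploit the symmetry of the Nijenhuis-pair hypothesis: $(J,I)$ is equally a compatible pair w.r.t. $\Theta$ with $J$ Nijenhuis, so the same theorem with the roles of $I$ and $J$ interchanged (again $k=s=0$, $n=1$) gives that $(J, J\smc I)$ is a compatible pair w.r.t. $\Theta$. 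Because $I$ and $J$ anti-commute, $J\smc I = -I\smc J$; since the concomitant $C_\Theta$ is linear in each argument, $C_\Theta(J, I\smc J) = -C_\Theta(J, J\smc I)=0$, and the tensor anti-commutation of $J$ and $I\smc J$ is inherited in the same way. Hence $(J, I\smc J)$ is a compatible pair w.r.t. $\Theta$ as well.

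It remains to show that $I\smc J$ is Nijenhuis for $\Theta$, and here the computational work is already packaged in Proposition~\ref{torsionIoJ}. Its formula~(\ref{torsion_of_composition}) expresses $2{\mathcal T}_{\Theta}(I\smc J)$ entirely in terms of the torsions ${\mathcal T}_{\Theta}I$ and ${\mathcal T}_{\Theta}J$, each evaluated on various combinations of $I$ and $J$ applied to the arguments. Since both $I$ and $J$ are Nijenhuis for $\Theta$, every term on the right-hand side vanishes, giving ${\mathcal T}_{\Theta}(I\smc J)=0$.

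Collecting these facts concludes the proof: for $(I, I\smc J)$ both $I$ and $I\smc J$ are Nijenhuis and the pair is compatible w.r.t. $\Theta$, and likewise for $(J, I\smc J)$. The argument is essentially a bookkeeping assembly of earlier results; the only point requiring a moment's care is the sign identity $J\smc I = -I\smc J$ combined with the bilinearity of $C_\Theta$ when transferring the compatibility of $(J, J\smc I)$ to that of $(J, I\smc J)$. I do not anticipate any serious obstacle, since the substantive computational content lives in Proposition~\ref{torsionIoJ} and Theorem~\ref{propconcgeral}.
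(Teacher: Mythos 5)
Your proof is correct and follows essentially the same route as the paper's: both establish that $I\smc J$ is Nijenhuis via Proposition~\ref{torsionIoJ}, and both obtain the vanishing of the concomitants from the recursion machinery of Section~2 (the paper cites formula~(\ref{concomitant_recursion}) with $n=1$ directly, while you invoke its packaged consequence, Theorem~\ref{propconcgeral}, plus a harmless sign argument using $J\smc I=-I\smc J$). The two differ only in which previously established result is cited, not in substance.
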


\begin{proof}
It is obvious that $I$ and $I \smc J$ anti-commute, as well as $J$ and $I \smc J$. From (\ref{torsion_of_composition})  we conclude that $I \smc J$ is a Nijenhuis tensor and from (\ref{concomitant_recursion}), with $n=1$, we get $C_\Theta(I, I \smc J)=C_\Theta(J, I \smc J)=0$.
\end{proof}

\

Using Proposition \ref{Prop:arealsonijenhuis}, we may establish a relationship between Nijenhuis pairs and hypercomplex triples.

\

The triple $(I,J,K)$ of skew-symmetric $(1,1)$-tensors on a pre-Courant algebroid $(E, \Theta)$ is called a {\em hypercomplex triple} if $I^2=J^2=K^2=I\smc J\smc K=-id_E$ and all the six Nijenhuis concomitants
${\mathcal N}_\Theta(I,I)$, ${\mathcal N}_\Theta(J,J)$, ${\mathcal N}_\Theta(K,K)$, ${\mathcal N}_\Theta(I,J)$, ${\mathcal N}_\Theta(J,K)$ and ${\mathcal N}_\Theta(I,K)$ vanish \cite{Stienon}. (See (\ref{torsion_sum}) for the definition of ${\mathcal N}_\Theta$).

\begin{ex}
Given a Nijenhuis pair $(I,J)$ such that $I^2=J^2=-id_E$, the triple $(I, J, I\smc J)$ is a hypercomplex structure. Conversely, for every hypercomplex structure $(I,J,K)$, the pairs $(I,J)$, $(J,K)$ and $(K,I)$ are Nijenhuis pairs.
\flushright{$\diamondsuit$}
\end{ex}

 The main result of this section is the following.

\begin{thm} \label{NijPair}
Let $I$ and $J$ be two $(1,1)$-tensors on a pre-Courant algebroid $(E, \Theta)$. If $(I,J)$ is a Nijenhuis pair for $\Theta$, then $(I,J)$ is a Nijenhuis pair for $\Theta_{T_1, T_2, \dots, T_s}$, for all $ s \in \Nn$, where $T _i$ stands either for $I$ or for $J$, for every  $i=1, \dots, s$.
\end{thm}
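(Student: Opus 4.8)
The plan is to proceed by induction on $s$, the number of deformations. The base case $s=0$ is exactly the hypothesis that $(I,J)$ is a Nijenhuis pair for $\Theta$. For the inductive step, set $\Theta' := \Theta_{T_1,\dots,T_{s-1}}$ and assume $(I,J)$ is a Nijenhuis pair for $\Theta'$; I must then show the same for $\Theta'_{T_s}$, where $T_s$ is either $I$ or $J$. Because the notion of Nijenhuis pair is symmetric in its two tensors --- anti-commutativity is symmetric, the concomitant satisfies $C_{\Theta'}(I,J) = C_{\Theta'}(J,I)$, and the requirement that both $I$ and $J$ be Nijenhuis treats them on equal footing --- it suffices to treat the case $T_s = I$; the case $T_s = J$ follows by exchanging the roles of $I$ and $J$ throughout.

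So I assume $I$ and $J$ anti-commute, $C_{\Theta'}(I,J)=0$, and ${\mathcal T}_{\Theta'}I = {\mathcal T}_{\Theta'}J = 0$, and I verify the three defining properties for $\Theta'_I$. First, $I$ remains Nijenhuis for $\Theta'_I$ by Corollary~\ref{Nijenhuis}, applied with base structure $\Theta'$. Second, to see that $J$ is Nijenhuis for $\Theta'_I$, I apply formula~(\ref{torsaoJI2}) of Proposition~\ref{prop_T(ThetaJ)I} with $\Theta$ replaced by $\Theta'$: each term on its right-hand side carries either the factor $C_{\Theta'}(I,J)$ or the torsion ${\mathcal T}_{\Theta'}J$, both of which vanish, whence ${\mathcal T}_{\Theta'_I}J = 0$.

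The third property --- that $(I,J)$ is a compatible pair w.r.t. $\Theta'_I$ --- is the only one requiring more than the bare inductive hypothesis, and it is here that the results proved just before the theorem come into play. Since anti-commutativity of $I$ and $J$ as $(1,1)$-tensors does not involve $\Theta$, I only need $C_{\Theta'_I}(I,J)=0$. Formula~(\ref{C_theta2}) of Proposition~\ref{propconcThetaI}, applied to $\Theta'$, gives $C_{\Theta'_I}(I,J) = 2\,C_{\Theta'}(I, I\smc J) + \{I, C_{\Theta'}(I,J)\}$. The second summand vanishes since $C_{\Theta'}(I,J)=0$, and the first vanishes because, by Proposition~\ref{Prop:arealsonijenhuis}, $(I, I\smc J)$ is again a Nijenhuis pair for $\Theta'$, so that $C_{\Theta'}(I, I\smc J)=0$. (One may instead obtain $C_{\Theta'}(I, I\smc J)=0$ directly from the recursion~(\ref{concomitant_recursion}) with $n=1$, using $C_{\Theta'}(I,J)=0$ and ${\mathcal T}_{\Theta'}I=0$.) This settles the three checks, hence the inductive step, and the theorem follows.

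The argument is mostly a careful assembly of the auxiliary results, so I do not expect a genuine obstacle; the single delicate point is the compatibility step, where it is essential that $(I, I\smc J)$ inherits the Nijenhuis-pair property. This is exactly what prevents the concomitant $C_{\Theta'}(I, I\smc J)$ from obstructing the deformation, and it is the reason Proposition~\ref{Prop:arealsonijenhuis} is established in advance.
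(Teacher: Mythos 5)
Your proof is correct and follows essentially the same route as the paper's: the paper also deduces $C_{\Theta_I}(I,J)=0$ by combining (\ref{C_theta2}) with the recursion (\ref{concomitant_recursion}) at $n=1$ (the direct route you mention parenthetically), then invokes Corollary~\ref{Nijenhuis} and formula (\ref{torsaoJI2}), and iterates. Your explicit induction and the detour through Proposition~\ref{Prop:arealsonijenhuis} are only cosmetic repackagings of the same argument.
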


\begin{proof}
Let $(I,J)$ be a Nijenhuis pair for $\Theta$. Combining formulae (\ref{concomitant_recursion}) and (\ref{C_theta2}), we get
\begin{equation}  \label{concthetaI}
C_{\Theta_I}(I,J)(X,Y)=  2I(C_{\Theta}(I,J)(X,Y))+ 4\,{\mathcal T}_{\Theta}I(JX,Y)+4\,{\mathcal T}_{\Theta}I(X,JY)=0.
\end{equation}

Now, from Corollary~\ref{Nijenhuis}, (\ref{torsaoJI2})  and (\ref{concthetaI}), we conclude that $(I,J)$ is a Nijenhuis pair for $\Theta_I$. Since we may exchange the roles of $I$ and $J$, we also conclude that $(I,J)$ is a Nijenhuis pair for $\Theta_J$.

Since the Corollary \ref{Nijenhuis} and the formulae (\ref{torsaoJI2}) and (\ref{concthetaI}) hold for any anti-commuting tensors $I$ and $J$,  and for any pre-Courant structure $\Theta$ on $E$, we can repeat the previous argument iteratively to conclude that $(I,J)$ is a Nijenhuis pair for $\Theta_{T_1, T_2, \dots, T_s}$, for all $ s \in \Nn$, where $T_i$ stands either for $I$ or for $J$, for every  $i=1, \dots, s$.
\end{proof}

As a consequence of the above theorem, we deduce:

\begin{cor}
Let $I$ and $J$ be two $(1,1)$-tensors on a Courant algebroid $(E, \Theta)$. If $(I,J)$ is a Nijenhuis pair for $\Theta$ then, for all $ s \in \Nn$,  $\Theta_{T_1, T_2, \dots, T_s}$ is a Courant structure on $E$, where $T _i$ stands either for $I$ or for $J$, for every  $i=1, \dots, s$.
\end{cor}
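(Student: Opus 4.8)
The plan is to deduce this directly from Theorem~\ref{NijPair} by induction on $s$, using the classical fact (recalled just before Proposition~\ref{courant_hierar}, with reference to \cite{grab}) that the deformation of a Courant structure by a skew-symmetric Nijenhuis tensor is again a Courant structure. The point is that Theorem~\ref{NijPair} already guarantees that the Nijenhuis property of both $I$ and $J$ persists along the entire hierarchy of deformed pre-Courant structures, so that each successive deformation is performed by a tensor which is Nijenhuis for the structure obtained at the previous step.

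For the base case $s=0$, the structure $\Theta$ is Courant by hypothesis. For the inductive step, I would assume that $\Theta_{T_1,\dots,T_{s-1}}$ is a Courant structure on $E$. By Theorem~\ref{NijPair}, the pair $(I,J)$ is a Nijenhuis pair for $\Theta_{T_1,\dots,T_{s-1}}$; in particular, both $I$ and $J$ are skew-symmetric tensors that are Nijenhuis for this Courant structure. Since $T_s$ is either $I$ or $J$, it follows that $T_s$ is a skew-symmetric Nijenhuis tensor for the Courant algebroid $(E,\Theta_{T_1,\dots,T_{s-1}})$.

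Applying the result of \cite{grab} to the Courant structure $\Theta_{T_1,\dots,T_{s-1}}$ and the skew-symmetric Nijenhuis tensor $T_s$, I would then conclude that
$$\Theta_{T_1,\dots,T_s}=(\Theta_{T_1,\dots,T_{s-1}})_{T_s}$$
is again a Courant structure on $E$, which completes the induction. (In supergeometric terms, this amounts to propagating the condition $\{\Theta_{T_1,\dots,T_{s-1}},\Theta_{T_1,\dots,T_{s-1}}\}=0$ to the next deformation, exactly as in the iterated argument of Proposition~\ref{courant_hierar}.)

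I do not expect any genuine obstacle here, since all of the analytic content has already been absorbed into Theorem~\ref{NijPair}. The only delicate point is that the deforming tensor at each stage must be Nijenhuis \emph{for the intermediate structure} rather than merely for the original $\Theta$; this is precisely the persistence statement supplied by Theorem~\ref{NijPair}, which is what makes the induction go through for an \emph{arbitrary} word $T_1,\dots,T_s$ in the letters $I$ and $J$.
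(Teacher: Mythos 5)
Your proof is correct and is exactly the argument the paper intends: the corollary is stated as an immediate consequence of Theorem~\ref{NijPair}, and your induction makes this explicit by combining the persistence of the Nijenhuis pair along the hierarchy (Theorem~\ref{NijPair}) with the fact, recalled before Proposition~\ref{courant_hierar} from \cite{grab}, that deforming a Courant structure by a skew-symmetric Nijenhuis tensor yields a Courant structure. Nothing is missing, and your remark that the key point is Nijenhuis-ness for the \emph{intermediate} structures is precisely why the theorem is needed.
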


\subsection{Hierarchies of Nijenhuis pairs}
Starting with a Nijenhuis pair $(I,J)$ for a pre-Courant algebroid $(E, \Theta)$, we construct several hierarchies of Nijenhuis pairs for any deformation of $\Theta$, either by $I$ or $J$.

We start with the construction of a hierarchy $(I^{2m+1},J)_{m \in \Nn}$ of Nijenhuis pairs where one of the Nijenhuis tensors keeps unchanged.

\begin{prop}  \label{oddpowers}
Let $I$ and $J$ be two $(1,1)$-tensors on a pre-Courant algebroid $(E,\Theta)$. If $(I,J)$ is a Nijenhuis pair for $\Theta$ then, for all $m \in \Nn$, $(I^{2m+1},J)$ is a Nijenhuis pair for $\Theta_{T_1, T_2, \dots, T_s}$, for all $ s \in \Nn$, where $T_i$ stands either for $I$ or for $J$, for every  $i=1, \dots, s$.
\end{prop}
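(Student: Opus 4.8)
The plan is to obtain the statement with essentially no new computation, by combining the two main theorems already at our disposal, namely Theorem~\ref{NijPair} and Theorem~\ref{propconcgeral}.

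First I would fix $s \in \Nn$ and a choice of $T_1, \dots, T_s$, each equal to $I$ or to $J$, and set $\Theta' := \Theta_{T_1, \dots, T_s}$. By Theorem~\ref{NijPair}, the hypothesis that $(I,J)$ is a Nijenhuis pair for $\Theta$ already guarantees that $(I,J)$ is a Nijenhuis pair for $\Theta'$. In particular, both $I$ and $J$ are Nijenhuis for $\Theta'$ and $(I,J)$ is a compatible pair w.r.t. $\Theta'$. This reduces the whole problem to a single base structure: it only remains to show that $(I^{2m+1},J)$ is a Nijenhuis pair for $\Theta'$.

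Next I would check the three defining conditions of a Nijenhuis pair for $\Theta'$ in turn. The tensor anti-commutation of $I^{2m+1}$ and $J$ is immediate: from $I \smc J = -J \smc I$ one gets inductively that $I^2$ commutes with $J$, hence $I^{2m}$ commutes with $J$ and therefore $I^{2m+1} \smc J = -J \smc I^{2m+1}$. That $I^{2m+1}$ is Nijenhuis for $\Theta'$ follows from Corollary~\ref{hierarchy_Nijenhuis} applied to $\Theta'$ (since $I$ is Nijenhuis for $\Theta'$), while $J$ being Nijenhuis for $\Theta'$ was recorded in the previous step. The only remaining, and central, point is the anti-commutation with respect to $\Theta'$, that is, $C_{\Theta'}(I^{2m+1},J)=0$.

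For this last point I would invoke the first part of Theorem~\ref{propconcgeral}, with $\Theta'$ playing the role of the base pre-Courant structure. Its hypotheses --- $I$ Nijenhuis for $\Theta'$ and $(I,J)$ a compatible pair w.r.t. $\Theta'$ --- are exactly what the reduction step provided, and specializing its conclusion to $k=0$ and $n=0$ yields $C_{\Theta'}(I^{2m+1},J)=0$, together with the assertion that $(I^{2m+1},J)$ is a compatible pair w.r.t. $\Theta'$. Combining this with the two Nijenhuis conditions above shows that $(I^{2m+1},J)$ is a Nijenhuis pair for $\Theta'$, which is the claim. I do not expect a genuine obstacle: the argument is a bookkeeping reduction to $\Theta'$, and the one potentially delicate ingredient, the compatibility of the odd power, has already been isolated in Theorem~\ref{propconcgeral}. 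The single point demanding a moment's care is that Theorem~\ref{propconcgeral} is phrased in terms of the deformations $\Theta_k$ by $I$; one must simply note that it is valid for an arbitrary pre-Courant algebroid, so that its first conclusion, taken at $k=n=0$, applies verbatim to $\Theta'$.
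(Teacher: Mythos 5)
Your proposal is correct and follows essentially the same route as the paper, whose proof is precisely the one-line combination of Theorem~\ref{NijPair}, Corollary~\ref{hierarchy_Nijenhuis} and Theorem~\ref{propconcgeral} that you spell out (reduce to $\Theta'=\Theta_{T_1,\dots,T_s}$ via Theorem~\ref{NijPair}, then apply the other two results with $\Theta'$ as base structure and $k=n=0$, $s=m$). Your explicit remark that Theorem~\ref{propconcgeral} holds for an arbitrary pre-Courant algebroid, so it can be applied to $\Theta'$ itself, is exactly the bookkeeping the paper leaves implicit.
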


\begin{proof}
The proof follows from Corollary~\ref{hierarchy_Nijenhuis},
 Theorem~\ref{propconcgeral} and Theorem~\ref{NijPair}.
\end{proof}

Now we consider the hierarchy $(I^{2m+1},J^{2n+1})_{m,n \in \mathbb{N}}$.
This case follows from the previous one: for every $m \in \Nn$,  $(I^{2m+1},J)$ is a Nijenhuis pair. Applying Proposition~\ref{oddpowers} to each one of these pairs, we get that $(I^{2m+1},J^{2n+1})_{m,n \in \mathbb{N}}$ is a hierarchy of  Nijenhuis pairs and we end up with the following.

\begin{thm}
Let $I$ and $J$ be two $(1,1)$-tensors on a pre-Courant algebroid $(E,\Theta)$. If $(I,J)$ is a Nijenhuis pair for $\Theta$ then, for all $m, n \in \Nn$, $(I^{2m+1},J^{2n+1})$ is a Nijenhuis pair for  $\Theta_{T_1, T_2, \dots, T_s}$, for all $ s \in \Nn$, where $T_i$ stands either for $I$ or for $J$, for every  $i=1, \dots, s$.
\end{thm}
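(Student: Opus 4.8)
The plan is to deduce the statement from Theorem~\ref{NijPair} together with two applications of Proposition~\ref{oddpowers}, exploiting that the notion of Nijenhuis pair is symmetric in its two entries: anti-commutation, anti-commutation with respect to the structure, and the requirement that each tensor be Nijenhuis are all unchanged under the exchange of the two tensors, so $(A,B)$ is a Nijenhuis pair for a given structure if and only if $(B,A)$ is.

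First I would fix $m,n \in \Nn$ and a word $T_1, \dots, T_s$ with each $T_i \in \{I, J\}$, and set $\Theta' := \Theta_{T_1, \dots, T_s}$. By Theorem~\ref{NijPair}, $(I,J)$ is a Nijenhuis pair for this $\Theta'$. Viewing $\Theta'$ as a new base pre-Courant structure and applying Proposition~\ref{oddpowers} with the trivial deformation word (taking its own length-zero case), I obtain that $(I^{2m+1}, J)$ is a Nijenhuis pair for $\Theta'$. By the symmetry noted above, $(J, I^{2m+1})$ is then a Nijenhuis pair for $\Theta'$, and a second application of Proposition~\ref{oddpowers} at the same base --- raising the first entry $J$ to the odd power $2n+1$ --- gives that $(J^{2n+1}, I^{2m+1})$ is a Nijenhuis pair for $\Theta'$. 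Exchanging the entries once more yields that $(I^{2m+1}, J^{2n+1})$ is a Nijenhuis pair for $\Theta'$; since $\Theta'$ ranges over the whole family $\Theta_{T_1,\dots,T_s}$, this is the assertion. The verifications hidden inside the two invocations of Proposition~\ref{oddpowers} are that $I^{2m+1}$ and $J^{2n+1}$ stay Nijenhuis for $\Theta'$ (Corollary~\ref{hierarchy_Nijenhuis} applied with base $\Theta'$) and that $C_{\Theta'}(I^{2m+1}, J^{2n+1})=0$, which is the second assertion of Theorem~\ref{propconcgeral} applied at base $\Theta'$ with no further deformation and no intermediate factor of $I$ (the composition power there set to zero); the anti-commutation of the two odd powers is inherited from that of $I$ and $J$.

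The step I expect to require care is the bookkeeping of deformation directions. Applying Proposition~\ref{oddpowers} twice to the original $\Theta$ in the naive order would, after the second step, only control the structures deformed by $J$ and by $I^{2m+1}$, not those deformed by $I$ and $J$ as the statement demands. The remedy is the order chosen above: Theorem~\ref{NijPair} is used \emph{first}, so that the entire hierarchy $\{\Theta_{T_1,\dots,T_s} : T_i \in \{I,J\}\}$ is reached while the two tensors are still $I$ and $J$; only afterwards are the odd powers introduced, separately at each fixed $\Theta'$ and with no further deformation. In this order no translation between deformations by $I^{2m+1}$ and by $I$ is ever needed, and the argument closes.
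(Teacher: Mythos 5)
Your proof is correct and takes essentially the same route as the paper: the paper also deduces the statement from Proposition~\ref{oddpowers} applied twice, first producing the pairs $(I^{2m+1},J)$ and then, with the entries swapped, raising $J$ to the odd power $2n+1$ at each structure in the hierarchy. The bookkeeping point you flag --- that the second power-raising must be performed at each fixed $\Theta_{T_1,\dots,T_s}$ with no further deformation, so that the deformation directions remain $I$ and $J$ rather than $J$ and $I^{2m+1}$ --- is precisely what the paper's terse argument leaves implicit, and your ordering resolves it correctly.
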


\

Let $I$ and $J$ be two skew-symmetric $(1,1)$-tensors on a pre-Courant algebroid $(E,\Theta)$. If $I$ and $J$ are Nijenhuis tensors, we know (see Corollary \ref{hierarchy_Nijenhuis}) that, for any $m,n \in \mathbb{N}$, $I^m$ and $J^n$ are also Nijenhuis tensors for $\Theta$. The next lemma gives a condition granting that $I^m \smc J^n$ is also Nijenhuis.

\begin{lem}  \label{Nijenhuis_odd_power}
 Let $I$ and $J$ be two skew-symmetric $(1,1)$-tensors on a pre-Courant algebroid $(E,\Theta)$.   If $I$ and $J$ are anti-commuting Nijenhuis tensors, then $I^m \smc J^n$ is a Nijenhuis tensor provided that one at least of the integers $m,n$ is odd.
\end{lem}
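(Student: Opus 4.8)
The plan is to use Proposition~\ref{torsionIoJ} as the sole computational engine and to reach arbitrary powers by a short induction. The key observation is that formula~(\ref{torsion_of_composition}) expresses $2\,{\mathcal T}_\Theta(I\smc J)$ as a sum of two groups, the first depending only on ${\mathcal T}_\Theta I$ and the cyclically permuted one only on ${\mathcal T}_\Theta J$. Hence, whenever two \emph{anti-commuting} tensors are \emph{both} Nijenhuis, the entire right-hand side vanishes and their composition is again Nijenhuis. This already settles the case $m=n=1$, and the whole proof consists in bootstrapping from this elementary fact.

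For the reduction, I would first note that the torsion is homogeneous of degree two, ${\mathcal T}_\Theta(\lambda A)=\lambda^2\,{\mathcal T}_\Theta A$, so that $A$ is Nijenhuis if and only if $-A$ is; since $I^m\smc J^n=(-1)^{mn}\,J^n\smc I^m$, being Nijenhuis is insensitive to swapping the two factors. Therefore it suffices to treat the case where $m$ is odd, the case where only $n$ is odd following by exchanging the roles of $I$ and $J$. With $m$ odd, Corollary~\ref{hierarchy_Nijenhuis} gives that $I^m$ is Nijenhuis, and from $I\smc J=-J\smc I$ together with $m$ odd we get $I^m\smc J=-J\smc I^m$, i.e.\ $I^m$ anti-commutes with $J$.

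The heart of the argument is an induction on $n$ establishing simultaneously that $I^m\smc J^n$ is Nijenhuis \emph{and} anti-commutes with $J$. The base case $n=0$ is just $I^m$ itself, treated above. For the inductive step, given that $I^m\smc J^n$ is Nijenhuis and anti-commutes with $J$, I would apply Proposition~\ref{torsionIoJ} to the pair $\bigl(I^m\smc J^n,\,J\bigr)$, which now consists of two anti-commuting Nijenhuis tensors, to conclude that $(I^m\smc J^n)\smc J=I^m\smc J^{n+1}$ is Nijenhuis; the anti-commutativity with $J$ then propagates by the one-line check $J\smc(I^m\smc J^{n+1})=(J\smc I^m)\smc J^{n+1}=-\,I^m\smc J^{n+2}=-\,(I^m\smc J^{n+1})\smc J$.

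The step I expect to be the main obstacle, or rather the subtlety one must respect, is that $I^m$ and $J^n$ anti-commute only when $mn$ is odd; when $mn$ is even they \emph{commute}, so Proposition~\ref{torsionIoJ} cannot be applied directly to the pair $(I^m,J^n)$ and there is no single application that produces $I^m\smc J^n$. The device that circumvents this is precisely to peel off the powers of $J$ one at a time, never composing two even powers, and to carry the anti-commutativity-with-$J$ property along the induction so that the hypotheses of Proposition~\ref{torsionIoJ} remain available at every stage. This is also exactly where the hypothesis that at least one of $m,n$ be odd becomes indispensable: if both were even, neither factor would anti-commute with the tensor against which it is composed, and the inductive scheme would collapse.
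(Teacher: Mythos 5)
Your proposal is correct and takes essentially the same approach as the paper: both reduce to the case $m$ odd and then invoke Proposition~\ref{torsionIoJ} to conclude that the composition of two anti-commuting Nijenhuis tensors is Nijenhuis. The only difference is one of bookkeeping: you peel off the factors of $J$ one at a time by induction on $n$, whereas the paper disposes of odd $n$ in a single application of the proposition (using Corollary~\ref{hierarchy_Nijenhuis} for $I^m$ and $J^n$) and then peels off a single $J$ to handle even $n$.
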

\begin{proof}
    As the roles of the tensors $I$ and $J$ are symmetric, we can suppose that $m$ is odd (and $n$ is even or odd).
If $n$ is also odd then $I^m$ and $J^n$ anti-commute and the result follows from Proposition~\ref{torsionIoJ}.
Suppose now that $m$ is odd and $n$ is even. By the previous case, $I^m\smc J^{n-1}$ is Nijenhuis and anti-commutes with $J$:
$$(I^m\smc J^{n-1})\smc J=I^m\smc J^n=-J \smc (I^m\smc J^{n-1}).$$
Then, using again Proposition~\ref{torsionIoJ}, we conclude that $I^m \smc J^n$ is a Nijenhuis tensor.
\end{proof}

The main result of this section is the following theorem.

\begin{thm} \label{gen_hierarchy}
Let $I$ and $J$ be two skew-symmetric $(1,1)$-tensors on a pre-Courant algebroid $(E, \Theta)$. If $(I,J)$ is a Nijenhuis pair for $\Theta$, then for all $m,n,t \in \Nn$, $(I^{2m+1}\smc J^n, J^{2t+1})$ is a Nijenhuis pair for $\Theta_{T_1, T_2, \dots, T_s}$, for all $ s \in \Nn$, where $T_i$ stands either for $I$ or for $J$, for every  $i=1, \dots, s$.
\end{thm}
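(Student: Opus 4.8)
The plan is to reduce the statement, which concerns an arbitrary deformed structure $\Theta' := \Theta_{T_1, \dots, T_s}$, to a statement about a single fixed pre-Courant structure, and then to recognize the conclusion as an instance of Theorem~\ref{propconcgeral} with the roles of $I$ and $J$ exchanged. First I would invoke Theorem~\ref{NijPair}: since $(I,J)$ is a Nijenhuis pair for $\Theta$, it is a Nijenhuis pair for every $\Theta' = \Theta_{T_1, \dots, T_s}$. Thus, fixing such a $\Theta'$, I may assume from now on that $I$ and $J$ are both Nijenhuis for $\Theta'$, that they anti-commute, and that $C_{\Theta'}(I,J)=0$; in other words $(I,J)$ is a compatible pair w.r.t. $\Theta'$. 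The whole problem is then reduced to showing that $(I^{2m+1}\smc J^n, J^{2t+1})$ is a Nijenhuis pair for this single structure $\Theta'$.

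For the two tensors to be Nijenhuis for $\Theta'$ I would argue as follows. The tensor $J^{2t+1}$ is an odd power of the Nijenhuis tensor $J$, so it is Nijenhuis for $\Theta'$ by Corollary~\ref{hierarchy_Nijenhuis}. For the composite $I^{2m+1}\smc J^n$, I would apply Lemma~\ref{Nijenhuis_odd_power}: as $I$ and $J$ are anti-commuting Nijenhuis tensors for $\Theta'$ and the exponent $2m+1$ of $I$ is odd, the tensor $I^{2m+1}\smc J^n$ is Nijenhuis for $\Theta'$. The anti-commutation of the two members of the pair is purely algebraic: moving $J^{2t+1}$ past $I^{2m+1}$ produces the sign $(-1)^{(2m+1)(2t+1)}=-1$, so $(I^{2m+1}\smc J^n)\smc J^{2t+1} = -J^{2t+1}\smc(I^{2m+1}\smc J^n)$.

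It remains to prove the vanishing of the concomitant $C_{\Theta'}(I^{2m+1}\smc J^n, J^{2t+1})$, which I expect to be the main point. Here I would apply the ``moreover'' part of Theorem~\ref{propconcgeral} with $\Theta'$ as base structure, $k=0$, and the roles of $I$ and $J$ interchanged; this exchange is legitimate because $(J,I)$ is again a compatible pair w.r.t. $\Theta'$ and $J$ is Nijenhuis for $\Theta'$. Specializing the parameters of that theorem to $2t+1$, $n$ and $2m+1$, it yields that $(J^{2t+1}, J^n\smc I^{2m+1})$ is a compatible pair w.r.t. $\Theta'$, and in particular $C_{\Theta'}(J^{2t+1}, J^n\smc I^{2m+1})=0$. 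Since $J^n\smc I^{2m+1}=(-1)^n I^{2m+1}\smc J^n$ and the concomitant is symmetric and bilinear in its entries, this is exactly $(-1)^n\,C_{\Theta'}(I^{2m+1}\smc J^n, J^{2t+1})=0$, giving the desired vanishing.

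Combining the three steps, $(I^{2m+1}\smc J^n, J^{2t+1})$ is a compatible pair w.r.t. $\Theta'$ whose two members are Nijenhuis for $\Theta'$, hence a Nijenhuis pair for $\Theta' = \Theta_{T_1, \dots, T_s}$, for all $m,n,t,s\in\Nn$ and all choices $T_i\in\{I,J\}$. The only genuine obstacle is the careful bookkeeping in the concomitant step: one must match our pair against the pattern $(A^{2s+1}, A^n\smc B^{2q+1})$ of Theorem~\ref{propconcgeral} via $A=J$, $B=I$, and correctly absorb the sign $(-1)^n$ coming from reordering $J^n$ and $I^{2m+1}$, using that the notion of compatible pair is unchanged under transposition and rescaling of its entries.
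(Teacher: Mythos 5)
Your proposal is correct and follows essentially the same route as the paper's proof: the same three ingredients (Lemma~\ref{Nijenhuis_odd_power} for $I^{2m+1}\smc J^n$, Corollary~\ref{hierarchy_Nijenhuis} for $J^{2t+1}$, and equation~(\ref{item_c}) of Theorem~\ref{propconcgeral} for the vanishing concomitant), combined with Theorem~\ref{NijPair} to pass between $\Theta$ and the deformed structures $\Theta_{T_1,\dots,T_s}$ — the paper merely does this last step at the end rather than at the start, which is a trivial reordering. Your explicit bookkeeping of the $I\leftrightarrow J$ role exchange and the sign $(-1)^n$ when invoking Theorem~\ref{propconcgeral} is a detail the paper leaves implicit, and it is handled correctly.
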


\begin{proof} First, we prove that $(I^{2m+1}\smc J^n, J^{2t+1})$ is a Nijenhuis pair for $\Theta$, for all $m,n,t \in \mathbb{N}$.
We already know that $I^{2m+1}\smc J^n$ is Nijenhuis (see Lemma~\ref{Nijenhuis_odd_power}) and that $J^{2t+1}$ is Nijenhuis (see Corollary \ref{hierarchy_Nijenhuis}). Moreover, $I^{2m+1}\smc J^n$ anti-commutes with  $J^{2t+1}$ and, applying (\ref{item_c}), we get $C_{\Theta}(I^{2m+1}\smc J^n, J^{2t+1})=0$.

Using Theorem~\ref{NijPair}, this result can be extended to all pre-Courant structures $\Theta_{T_1, T_2, \dots, T_s}$, where $T_i$ stands either for $I$ or for $J$, for every  $i=1, \dots, s$.
\end{proof}

\

\noindent {\bf Acknowledgments.} This work was partially supported by CMUC-FCT (Portugal) and FCT grant PTDC/MAT/099880/2008 through
European program COMPETE/FEDER.


\begin{thebibliography}{99}

\bibitem{antunes2} P. Antunes, Crochets de Poisson gradu\'es et applications: structures
compatibles et g\'en\'eralisations des structures hyperk\"{a}hl\'eriennes, Th\`ese
de doctorat de l'\'Ecole Polytechnique, March 2010.



\bibitem{cgrabm04} J. Cari\~nena, J. Grabowski, G. Marmo, Courant algebroid and Lie bialgebroid
contractions, J. Phys. A 37 (2004), no. 19, 5189--5202.


\bibitem{cou} T. Courant, Dirac manifolds, Trans. Amer. Math. Soc. 319 (1990), 631--661.


\bibitem{dorfman} I. Dorfman, Dirac structures and integrability of nonlinear evolution equations, Nonlinear Science -- Theory and applications, Wiley, Chichester, 1993.

\bibitem{grab} J. Grabowski, Courant-Nijenhuis tensors and generalized geometries,
in Groups, geometry and physics, Monogr. Real Acad. Ci. Exact. F\'{\i}s.-
Qu\'{\i}m. Nat. Zaragoza, 29, 2006, pp. 101--112.

\bibitem{graburb} J. Grabowski and P. Urbanski, Lie algebroids and Poisson Nijenhuis structures.
Rep. Math. Phys 40 (1997), 195--208.

\bibitem{marrero} R. Ib\'a\~nez, M. de Le\'on, J. C. Marrero, E. Padr\'on,
Leibniz algebroid associated with a Nambu-Poisson structure, J. Phys. A 32 (1999), 8129--8144.


\bibitem{kobay} S. Kobayashi, K. Nomizu, Foundations of differential geometry, I. Interscience, 1963.

\bibitem{YKS92}  Y. Kosmann-Schwarzbach, Jacobian quasi-bialgebras and quasi-Poisson Lie groups. In {\em Mathematical aspects of classical field theory}, Contemp. Math., 132,  Amer. Math. Soc., 1992, pp. 459--489.

\bibitem{YKS95} Y. Kosmann-Schwarzbach, Exact Gerstenhaber algebras and Lie bialgebroids, Acta
Applicandae Mathematicae 41 (1995), 153--165.


\bibitem{YKS05} Y. Kosmann-Schwarzbach, Quasi, twisted, and all that ... in Poisson geometry and Lie algebroid theory, in {\em The Breadth of symplectic and Poisson geometry}, J.E. Marsden and T. Ratiu, eds., Progr. Math., 232, BirKh\"{a}user, Boston, MA, 2005, pp. 363--389.

\bibitem{YKS11} Y. Kosmann-Schwarzbach, Nijenhuis structures on Courant algebroids, Bull. Brazilian Math. Soc. 42  (4) (2011), 625-649.

\bibitem{magriYKS} Y. Kosmann-Schwarzbach, F. Magri, Poisson-Nijenhuis structures,
Ann. Inst. H. Poincar\'e, Phys. Th\'eor. 53 (1990), no. 1, 35--81.

\bibitem{lwx} Z.-J. Liu, A. Weinstein, P. Xu, Manin triples for Lie bialgebroids, J. Diff. Geom. 45 (1997), 547--574.


\bibitem{mackxu} K. Mackenzie, P. Xu, Lie bialgebroids and Poisson groupoids, Duke Math.
J. 73 (1994), 415--452.

\bibitem{magrimorosi} F. Magri, C. Morosi,  A geometrical characterization of integrable Hamiltonian systems through the theory of Poisson-Nijenhuis manifolds, Quaderno S 19, Univ. of Milan, 1984.


\bibitem{roythesis} D. Roytenberg, Courant algebroids, derived brackets and even symplectic supermanifolds. PhD thesis, UC Berkeley, 1999. math.DG/9910078.

\bibitem{royContemp} D. Roytenberg, On the structure of graded symplectic supermanifolds and Courant algebroids, in {\em Quantization, Poisson brackets and beyond}, T. Voronov, ed., Contemp. Math., 315, Amer. Math. Soc., Providence, RI, 2002, pp. 169--185.

\bibitem{roy} D. Roytenberg, Quasi-Lie bialgebroids and twisted Poisson manifolds,
Lett. Math. Phys. 61 (2002), no. 2, 123--137.

\bibitem{Stienon} M. Sti\'{e}non, Hypercomplex structures on Courant algebroids, C. R. Acad. Sci. Paris 347 (2009), 545--550.

\bibitem{voronov} T. Voronov, Graded manifolds and Drinfeld doubles for Lie bialgebroids, in {\em Quantization, Poisson brackets and beyond}, T. Voronov, ed., Contemp. Math. 315, Amer. Math. Soc., Providence, RI, 2002, pp. 131--168.

\end{thebibliography}
\end{document}